\def \bs {\backslash}	
\def \C{{\mathbb C}}
\def \CB{\mathcal{B}}
\def \Ga{\Gamma}
\def \ga{\gamma}
\def \GL{\operatorname{GL}}
\def \Hom{\operatorname{Hom}}
\def \Ind{\operatorname{Ind}}
\def \la{\lambda}
\def \PGL{\operatorname{PGL}}
\def \PGSP{\operatorname{PGSp}}
\def \GSp{\operatorname{GSp}}
\def \Spin{\operatorname{Spin}}
\def \Q{{\mathbb Q}}
\def \R{{\mathbb R}}
\def \SL{\operatorname{SL}}
\def \Sp{\operatorname{Sp}}
\def \tr{\operatorname{tr}}
\def \Z{{\mathbb Z}}
\def \({\left(}
\def \){\right)}
\def \deg {\operatorname{deg}}
\def \G {\mathbb{G}}
\def \wt {\mathrm{wt}}
\def \A {\mathcal{A}}
\def \ord {\mathrm{ord}}
\def \n {n_\Ga}
\def \diag{\mathrm{diag}}
\def \P{\mathcal{P}}
\def \spin {\mathrm{spin}}
\def \st {\mathrm{st}}
\def \semi {\mathrm{semi}}
\def \ext {\mathrm{ext}}
\def\B{\mathcal{B}}
\def\O{\mathcal{O}}
\def \La {\Lambda}
\newtheorem{thm}{Theorem}[section]
\newtheorem{theorem}[thm]{Theorem}
\newtheorem{proposition}[thm]{Proposition}
\numberwithin{equation}{section} \theoremstyle{definition}
\newcommand{\St}{\rm{St}}
\newcommand{\triv}{{\mathbf1}}
\begin{document}

\title{Zeta and L-functions of finite quotients of apartments and buildings}
\author{Ming-Hsuan Kang, Wen-Ching Winnie Li, and Chian-Jen Wang}
\address{Ming-Hsuan Kang\\ Department of Applied Mathematics\\National Chiao-Tung University\\
Hsinchu, Taiwan} \email{\tt mhkang@nctu.edu.tw}
\address{Wen-Ching Winnie Li\\ Department of Mathematics\\ The Pennsylvania State University\\
University Park, PA 16802, U.S.A.} \email{\tt wli@math.psu.edu}
\address{Chian-Jen Wang \\  Department of Mathematics\\ Tamkang University \\ New Taipei City , Taiwan} \email{\tt 142888@mail.tku.edu.tw}
\thanks{The research of the first author is supported by the NSC grant 103-2115-M-009 -006.
The research of the second author is partially supported by the NSF grant DMS-1101368 and the Simons Foundation grant \# 355798. The research of the third author is supported by the NSC grant 103-2115-M-032-001. Part of the research was carried out when the authors visited the National Center for Theoretical Sciences in Hsinchu, Taiwan. They would like to thank  NCTS for its hospitality and support. }

\subjclass[2000]{Primary: 22E35; Secondary: 11F70 }

\begin{abstract} In this paper,
we study relations between Langlands $L$-functions and zeta functions of geodesic walks and galleries for finite quotients of the apartments of $G=$PGL$_3$ and PGSp$_4$ over a nonarchimedean local field with $q$ elements in its residue field. They give rise to an identity (Theorem 5.3) which can be regarded as a generalization of Ihara's theorem for finite quotients of the Bruhat-Tits trees. This identity is shown to agree with the $q=1$ version of the analogous identities for finite quotients of the building of $G$  established in \cite{KL1, KLW, FLW}, verifying the philosophy of the field with one element by Tits.
A new identity for finite quotients of the building of PGSp$_4$ involving the standard $L$-function (Theorem 6.3), complementing the one in \cite{FLW} which involves the spin $L$-function, is also obtained.
\end{abstract}

\date{}
\maketitle
\def \aff {\mathrm{aff}}
\def \k {k_\Ga}
\section{Introduction}

For a discrete subgroup $\Ga$ of an algebraic group $G$ over a nonarchimedean local field $F$ with $q$ elements in its residue field, two topics play important roles in the study of $\Ga$. The first is representation theoretic. The unramified irreducible subrepresentations of the regular representation of $G$ on $L^2(\Ga \bs G)$ admit Langlands $L$-functions
defined using their Satake parameters. The second is geometric in nature.
 The classifying space of $\Ga$ can be chosen as the quotient by $\Ga$ of the Bruhat-Tits building $\CB$ of $G$, denoted by $\CB_\Ga$.
The building $\CB$ is the union of apartments, which are Euclidean spaces endowed with the structure of a simplicial complex.
These special features allow us to define geodesics and their counting functions, called zeta functions.

The connection between the $L$- and zeta functions was first studied by Ihara \cite{Ih} in 1960's for the case $G=\PGL_2$, a split algebraic group of rank one.
{Essentially}, he showed that the zeta function (of geodesic walks) and the $L$-function of a discrete cocompact torsion-free subgroup $\Ga$ of $G$ are both rational functions in $u$ and their ratio is equal to $(1-u^2)$ raised to the Euler characteristic of the finite graph $\CB_\Ga$. Bass \cite{Ba} generalized Ihara's result from cocompact to cofinite subgroups by adding a weight function in zeta functions;
Hashimoto \cite{Ha1} studied the subgroups of $SU(3)$, which is a non-split algebraic group of rank one.

Since the space $L^2(\Ga \bs G)$ and the building of $G$ are well-defined for higher rank algebraic groups,
it is natural to seek extensions of Ihara's theorem to discrete subgroups of such groups.  Little was known until the recent work \cites{KL1,KLW} and \cite{FLW}, which generalize Ihara's result to certain split algebraic groups of rank two, namely $\PGL_3$ and $\PGSP_4$, respectively. For a rank two group, in addition to the zeta function of geodesic walks as in a rank one group, one also considers the zeta function of geodesic galleries. These are generalizations of the Selberg zeta function to the $p$-adic setting. The extended Ihara's theorem for rank two groups is an identity involving the $L$-function and these two kinds of zeta functions, reminiscent of the zeta function for a smooth irreducible projective surface defined over a finite field.  Moreover, the Artin $L$-functions of the discrete torsion-free cocompact subgroups of PGL$_3$,  extending those for PGL$_2$ in \cites{Ih, Ha2, Ha3}, are investigated in \cite{KL2}, where a similar identity involving the Artin $L$-function is established.

For algebraic groups of higher rank, as revealed in \cites{KL1, KLW, FLW}, the situation is a lot more complicated. It could be illuminating to first consider the degenerate case following the philosophy of the field with one element introduced by Tits.
This was considered for $G=$PGL$_n$ over the 1-adic field $\Q_1$ in \cite{DK}. In this case, the group $G(\Q_1)$ is isomorphic to the extended affine Weyl group and its building is a single apartment. Other approaches to study zeta functions for algebraic groups of higher rank are explored in \cite{KM} and \cite{DKM}, which only concern geodesics in the highest dimension. 

The purpose of this paper is to study $L$- and zeta functions {associated to} finite index subgroups $\Ga$ of the extended affine Weyl group $W_\ext$ of a simple split algebraic group $G$ {of adjoint type. In this case,
$W_\ext$ modulo the Weyl group can be identified with the hyper-special vertices} of the standard apartment $\A$ of the building $\CB$ attached to $G(F)$; thus this is indeed the degenerate case of our ultimate goal. We show how these functions are related. Compared to the aforementioned known results \cites{KL1, KLW, FLW} in the building case, the relations in the apartment case appear to be more transparent. The $L$- and zeta functions are attached to an irreducible representation of the Langlands dual group $\hat G(\mathbb C)$ of $G(F)$, which makes relations much clearer and also allows for further generalization.
Special focus is on the groups $G =$PGL$_3$ and PGSp$_4$. We develop a unified approach to obtain an identity  extending Ihara's theorem for such $G$ in the apartment case (Theorem \ref{Aidentity}), and compare them with the results established in \cites{KL1, KLW, FLW} for the building of $G$.

This paper is organized as follows. The basic concepts of an apartment $\A$ and Langlands $L$-functions are reviewed in \S 2, and zeta functions of walks are defined in \S3. In \S5 a new concept, the gallery of $\pi$-chambers, and the zeta function of such galleries are introduced. Theorem \ref{infinitesum2} relates the $L$-function to counting closed walks in $\A$. No similar expressions for the building $\B$ are known. The quotient $\A_\Ga = \Ga \backslash \A$ is a simplicial torus when $\Ga$ is a subgroup of the coroot lattice. This case is handled in \S3. \S4 and \S5 are devoted to understanding the remaining case, where $\A_\Ga$ is a Klein bottle provided that $G$ has rank $2$. For this we specialize to $G =$PGL$_3$ and PGSp$_4$. The $L$-functions and zeta functions of walks are studied in \S4 and zeta functions of galleries in \S5.
 Explicit connections between the $L$-function and the zeta function of walks are given in Theorem \ref{maintheorem0} for tori and Theorem \ref{maintheorem1} for Klein bottles; and the relation between the zeta function of walks and that of galleries is summarized in Theorem \ref{zeta2andzeta} for both cases. Combined, they yield Theorem \ref{Aidentity}, a generalization of Ihara's theorem. In \S6 we turn to the buildings of $G=$PGL$_3$ and PGSp$_4$.
It is shown that the zeta functions studied in \cites{KL1, KLW, FLW} agree with the zeta functions defined in this paper using group theoretic language, and, upon setting $q=1$, the (more complicated) identities established in  \cites{KL1, KLW, FLW} for buildings do reduce to the identities for apartments, verifying the philosophy of Tits. Moreover, we obtain a new identity, Theorem \ref{st-building}, for finite quotients of the building of PGSp$_4$ involving the $L$-function associated to the degree-$5$ standard representation of Spin$_5(\mathbb C)$. This complements the results in \cite{FLW} for the $L$-function associated to the degree-$4$ spin representation of Spin$_5(\mathbb C)$.  By letting $q=1$ in the new identity, we see that it also compares well with the corresponding identity on the apartment.

\bigskip

\section{Preliminary}
\subsection{Root data and dual groups}
Let $G$ be a connected  simple split algebraic group over a local field $F$ with a discrete valuation $\ord_F$. Let $T$ be a maximal split torus contained in a Borel subgroup $B$.
The rational character group and the cocharacter group of $T$ are
$$ X(T) = \Hom(T,\G_m) \qquad \mathrm{and} \qquad Y(T) =\Hom( \G_m,T) $$
respectively, which are free $\Z$-modules of rank equal to $\dim T$.
There is a perfect paring $\langle , \rangle $ between $X(T) $ and $Y(T)$ characterized by
$$ \la(\rho(a)) = a^{\langle \la, \rho \rangle}$$
for all $\la \in X(T), \rho \in Y(T)$ and $a \in F$.
Let $\Phi$ be the set of nontrivial characters of $T$ in the adjoint representation of $G$, called the set of roots of $G$.
The roots contained in $\mathrm{Lie}(B)$ are positive roots $\Phi^+$ which contains a set of simple roots $\Delta$ as a $\Z$-basis of $X(T)$.
Let $\check{\Phi} = \{ \check{\alpha}, \alpha \in \Phi\} \subset Y(T)$ be the set of coroots of $G$ characterized by the following properties:
\begin{enumerate}
\item $\langle \alpha, \check{\alpha} \rangle = 2.$
\item $s_\alpha(x) := x - \langle x, \check{\alpha} \rangle \alpha$ defines an involution on $\Phi$.
\end{enumerate}
Furthermore, for $\check{\alpha} \in \check{\Phi}$, $s_{\check{\alpha}}(x) = x- \langle \alpha,x  \rangle \check{\alpha}$ also defines an involution on $\check{\Phi}$.
The quadruple $(X(T), \Phi, Y(T), \check{\Phi}$) is called the root datum of $G$.
There is a connected simple split algebraic group, unique up to isomorphism, with the dual root datum  $( Y(T), \check{\Phi}, X(T), \Phi$), called the dual group of $G$.

We are interested in two dual groups of $G$: the dual group $\hat{G}(F)$ over the same based field $F$ and the complex dual group $\hat{G}(\C)$.
There is a geometric object associated to $\hat{G}(F)$ known as the Bruhat-Tits building on which $G(F)$ acts as automorphisms \cite{Ti}.
On the other hand, the complex dual group $\hat{G}(\C)$ parametrizes part of the irreducible complex representations of $G$. We investigate the interplay between these two objects.

\subsection{The apartment associated to $T$}

Consider $V=Y(T) \otimes \R$ endowed with a positive definite bilinear form $(, )$ preserved by the actions of $\{s_{\check{\alpha}} , \check{\alpha} \in \check{\Phi}\}$.
In this case, for $\alpha \in \Phi$ and $\beta \in \check{\Phi}$, we have
$$ \langle \alpha, \beta \rangle = 2\frac{ ( \check{\alpha}, \beta )}{ ( \beta, \beta)}.$$
The group generated by  $\{s_{\check{\alpha} } , \check{\alpha}  \in \check{\Phi}\}$ is the Weyl group $W$ of $G$, whose elements are linear isometries on $V$.

By abuse of notation, we identify $v\in V$ as the translation on $V$ mapping $x$ to $x+v$. Then the extended affine Weyl group is a subgroup of isometries on $V$ generated by the translation subgroup $Y(T)$ and the Weyl group $W$, expressed as a semi-direct product
$$ W_{\ext} =  Y(T) \rtimes W.$$

Except for $Y(T)$, there are two important lattices in $V$: the coroot lattice $\La_r$ spanned by coroots,  and the coweight lattice $\La$ given by
$$ \Lambda = \{ \lambda \in V, \langle \alpha, \la \rangle \in \Z , \forall \alpha \in \Phi \}.$$
In general,
$$  \Lambda \supseteq Y(T) \supseteq \Lambda_r.$$
Further, $\Lambda = Y(T)$ when $G$ is of adjoint type,  and  $Y(T)=\Lambda_r $
when $G$ is simply connected.

The affine Weyl group is a subgroup of $W_{\ext}$ generated by the coroot lattice and the Weyl group, given by
$$ W_\aff= \Lambda_r \rtimes W.$$
A reflection in $W_{\mathrm{aff}}$ is of the form $(k \check{\alpha} , s_{\check{\alpha} })$ for some $k \in \Z$ and $\check{\alpha}  \in \check{\Phi}$; it fixes the hyperplane
$$ H_{\check{\alpha} ,k} = \{ v \in V, \langle \alpha , v \rangle= k \}$$
called a wall. The apartment associated to $T$, denoted by $\A$, is a simplicial complex with $V$ as its underlying space. The simplices of highest dimension, called chambers, are the closure of the connected components of $V$ with the walls removed. Lower dimensional simplices are intersections of chambers.
The vertices of $\A$ in the coweight lattice $\Lambda$ are called hyper-special vertices, whose stabilizers in automorphisms of $\A$ are isomorphic to $W$.

Note that when $G$ is of adjoint type, $Y(T)=\Lambda$ and the extended affine Weyl group $W_\ext$ acts transitively on hyper-special vertices.
Moreover, one can assign  $(\dim(V)+1)$ types to the vertices such that the vertices of each chamber have distinct types. Then $W_{\mathrm{aff}}$ preserves types and it acts transitively on hyper-special vertices of a fixed type.

Let $T_0$ be the maximal compact subgroup of $T$. There is a $\Z$-module isomorphism between $T/T_0$ and $Y(T)$  such that the $T_0$-coset represented by $t = (t_1,\cdots,t_n)$ is mapped to $\rho_t \in Y(T)$  given by
$$\rho_t(x) = (x^{\ord_F(t_1)},\cdots ,x^{\ord_F(t_n)}).$$
In this case, we can identify the extended affine Weyl group as
$$W_\ext \cong N(T)/T_0 \cong T/T_0 \rtimes N(T)/T$$
where $N(T)$ is the normalizer of $T$ in $G$.

\subsection{Apartment as weight space}
Let $\hat{G}(\C)$ be the complex dual group of $G$ with the maximal torus $\hat{T}(\C)$ such that $X(\hat{T})$ is identified with $Y(T)$ and its roots and positive roots are $\hat{\Phi}$ and $\hat{\Phi}^+.$
In this case, we can regard the space $V = Y(T)\otimes_{\Z} \R$ as the weight space of $\hat{G}(\C).$

Recall that for the set of simple roots $\Phi^+$, there is a set of weights $\{f_\alpha, \alpha \in \Phi^+\}$ in $X(\hat{T})$ characterized by
$$ \langle \beta, f_\alpha \rangle = \delta_{\alpha, \beta} \quad \mbox{ for all }\alpha, \beta \in \Phi^+$$
called the set of fundamental weights of $\hat{G}(\C)$.

A complex representation $\pi$ of  $\hat{G}(\C)$ is called minuscule (resp. quasi-minuscule) if its weights (resp. nontrivial weights) form a single $W$-orbit of some fundamental weight. Denote by $\wt(\pi)$ the set of weights of $\pi$, and by $\wt'(\pi)$ the subset of nontrivial weights in $\wt(\pi)$.

\subsection{Langlands L-functions}
By the work of Casselman \cite{Ca}, every irreducible unramified representation $\rho$ of {$G$}  occurs in some unramified principal series representation $I_\chi$, where $\chi$ is a character of $T$ trivial on $T_0$. Such $\chi$ is uniquely determined up to the action of the Weyl group $W$.
On the other hand, we have a canonical W-equivariant isomorphism among the groups
$$ \Hom(T/T_0,\C^\times) \cong \Hom(Y(T),\C^\times) \cong \Hom(X(\hat{T}),\C^\times) \cong \hat{T}(\C).$$
Thus, there is a $W$-orbit in the complex torus $\hat{T}(\C)$ corresponding to the unramified character $\chi$, called the Satake parameter of $\rho$, denoted by $s_\rho$ \cite{Sa}.
Fix a finite-dimensional representation $\pi$ of $\hat{G}(\C)$. Then the local
{Langlands} $L$-function attached to $(\rho,\pi)$ is given as
\begin{align*}
L(\rho, \pi, u)
&= \det(1- \pi( s_\rho)u)^{-1} ,
\end{align*}
which is independent of the choice of $s_\rho$. Moreover, let $\wt(\pi)$ be the set of weights of $\pi$ and regard $\chi$ as a character on $X(\hat{T})$, then
 \begin{align*}
L(\rho,\pi, u) = \prod_{\la \in \wt(\pi)} (1- \chi(\la) u )^{-1}.
\end{align*}

\def \Res {\mathrm{Res}}

\subsection{L-functions for extended affine Weyl groups}
Before defining the L-functions for extended affine Weyl groups, we recall representations of such groups.
As the Weyl group $W$ is the maximal finite (and hence compact) subgroup of $W_\ext$, an irreducible representation of $W_\ext$ is called unramified if it has a non-zero $W$-fixed vector.
For a finite-dimensional irreducible representation $\tau$ of $W_\ext$,  let $\chi$ be a character of $Y(T) \cong T/T_0$ occurring in the restriction representation $\Res^{W_\ext}_{Y(T)} \tau$ of $Y(T)$. By Frobenius reciprocity,
$$ \Hom_{W_\ext}(\tau, \Ind^{W_\ext}_{Y(T)} \chi) = \Hom_{Y(T)}(\Res^{W_\ext}_{Y(T)} \tau, \chi) \neq \{0\}.$$
Note that the $W$-fixed subspace of $\Ind_{Y(T)}^{W_\ext} \chi$ is one-dimensional.
Therefore, an unramified representation $\tau$ of $W_\ext$ is isomorphic to the unique unramified sub-representation of some $\Ind_{Y(T)}^{W_\ext} \chi$.
By the classification in \cite{Se} of irreducible representations of semidirect products by an abelian group,  such $\chi$ is unique up to the action of $W$. The $L$-function for an unramified representation $\tau$ of $W_{\ext}$ associated to the representation $\pi$ of $\hat{G}(\C)$ is defined as
\begin{align*}
L_{\ext}(\tau,\pi,  u) = \prod_{\la \in \wt(\pi)} (1- \chi(\la) u )^{-1},
\end{align*}
which coincides with the $L$-function of the unique irreducible unramified representation $\rho$ of $G$ occurring in the principal series $I_\chi$.

\section{Zeta functions of geodesic walks and L-functions}
From now on, assume that $G$ is of adjoint type so that the coweight lattice $\Lambda$ is equal to  the co-character group $Y(T) \cong W_\ext / W$.
Fix a torsion-free subgroup $\Ga$ of $W_{\mathrm{aff}}$ such that $\Ga$ preserves types of vertices. Then $\A_\Ga=\Ga \bs \A$ is a finite complex with the universal cover $\A$ and the fundamental group $\Ga$.
Denote by $\Ga\bs \Lambda$ the set of hyper-special vertices on $\A_\Ga$. We shall fix a set of representatives of $\Ga \bs \Lambda$ in $\Lambda$ and still denote it by $\Ga \bs \Lambda$.
Let $\pi$ be an irreducible representation of $\hat{G}(\C)$ whose nontrivial weights form the $W$-orbit of a fundamental weight. In other words, $\pi$ is either a minuscule representation or a fundamental quasi-minuscule representation.
For example,
\begin{itemize}
\item For $\hat{G}(\C)=\SL_n(\C)$, $\pi$ is an exterior power of the standard representation.
\item For $\hat{G}(\C)=\Spin_{2n+1}(\C)$, $\pi$ is the spin representation of dimension $2^n$ or the standard representation of dimension $2n+1$.
\end{itemize}

We shall identify the weights in $\wt(\pi)$  of $\pi$ as a subset of $\Lambda$.

\subsection{Rational geodesics in an apartment}
A geodesic $L$ in $\A \cong \R^n$ is a straight line with a fixed orientation, called its direction. Given a subset $S$ of $\Lambda$, a line $L$ is called an $S$-geodesic if it
 has the same direction as some element in $S$.
When $S$ is the set of nontrivial weights of $\pi$, we also call $L$ a $\pi$-geodesic.
Furthermore, $L$ is called a rational geodesic if it is contained in the 1-skeleton of $\A$.
\subsection{Rational geodesics in a finite quotient}

Let $C_{\pi}$ be the collection of closed walks contained in the 1-skeleton of $\A_\Ga$ which lift to a part of a $\pi$-geodesic in $\A$. Here a closed walk has a starting point. Two closed walks $c$ and $c'$ in $\A_\Ga$ are called equivalent if one can be obtained from the other by switching the starting vertex. A closed walk $c$ in $C_\pi$ is called a {\it geodesic walk} if all walks equivalent to $c$ are in $C_\pi$. Thus whether an element $c$ in $C_{\pi}$ is a geodesic walk depends on how it closes on itself. More precisely, a lift of $c$ in $\A$ is the line segment from $v$ to $g(v)$ for some $g \in \Gamma$.
Then the line segment from $g(v)$ to $g^2(v)$ is also a lift of $c$, and $c$ is a geodesic walk if and only if the direction from $v$ to $g(v)$ agrees with that from $g(v)$ to $g^2(v)$. In other words, $c$ is a geodesic walk if there is no corner at $g(v)$, or equivalently, $c$ repeated twice also lies in $C_\pi$. The following picture depicts a lifting of a closed walk that is not a geodesic walk, in which the element $g \in \Gamma$ is a glide reflection explained in \S4.1.

\begin{center}
\begin{tikzpicture}[scale=1.2]
\clip (-.5,-.5) rectangle ++ (4.2,4.2);

\foreach \y in {-2,-1,...,5}
{
\draw plot (\x,\y);
\draw plot (\y,\x);
\draw plot (\x+\y,\x);
\draw plot (\x+\y,-\x);
\draw [line width=.5mm, blue,]  (1,1)--(2,2);
\draw [line width=.5mm, blue,dashed]  (2,2)--(3,1);
\draw [line width=.5mm, red]  (-2,1.5)--(5,1.5);
\draw [line width=1mm, green,->]  (1,1.5)--(2,1.5);
}
\path
node (v1) at (1,.8) [] {$v$}
node (v1) at (2,2.2) [] {$g(v)$}
node (v1) at (3,.8) [] {$g^2(v)$}
;
\end{tikzpicture}
\end{center}
%\kang{looks locally like a walk, but there may be an angle when it closes on itself, which case it is not a geodesic %walk.} 
Recall that all nontrivial weights in $\wt(\pi)$ have the same length $\ell(\pi)$.
 Thus the length of a closed walk $c$ in $C_\pi$ is an integral multiple of $\ell(\pi)$. This integral multiple is called the normalized length of $c$, denoted by $l(c)$.
Denote by $N_n$ (resp. $ \tilde{N}_n$) the number of closed walks (resp. closed geodesic walks) in $C_\pi$ of normalized length $n$. A closed walk is called {\it primitive} if it is not a repetition of a shorter closed walk.

%\begin{center}
%\begin{tikzpicture}[scale=1.2]
%\clip (-.5,-.5) rectangle ++ (4.2,4.2);

%\foreach \y in {-2,-1,...,5}
%{
%\draw plot (\x,\y);
%\draw plot (\y,\x);
%\draw plot (\x+\y,\x);
%\draw plot (\x+\y,-\x);
%\draw [line width=.5mm, blue,]  (1,1)--(2,2);
%\draw [line width=.5mm, blue,dashed]  (2,2)--(3,1);
%\draw [line width=.5mm, red]  (-2,1.5)--(5,1.5);
%\draw [line width=1mm, green,->]  (1,1.5)--(2,1.5);
%}
%\path
%node (v1) at (1,.8) [] {$v$}
%node (v1) at (2,2.2) [] {$g(v)$}
%node (v1) at (3,.8) [] {$g^2(v)$}
%;
%\end{tikzpicture}
%\\
%\kang{For the glide reflection $g \in \Gamma $, $v \to g(v)$ does not project to a closed geodesic walk in the %quotient $\A_\Gamma$ 
%since there is an angle when the walk closes on itself under projection.}
%\end{center}

\subsection{Zeta functions of geodesic walks}
The zeta function of $\pi$-geodesic walks on $\A_\Ga$ is defined as
$$ Z(\A_\Ga,\pi,u) = \prod_{c}(1- u^{l(c)})^{-1}, $$
where $c$ runs through all equivalence classes of primitive rational closed $\pi$-geodesics in $\A_\Ga$,
 %that is, all primitive closed geodesics in $\A_\Ga$ of type $\pi$.
As such, the zeta function of geodesic walks may be regarded as an analogue of the Selberg zeta function.

It should be pointed out that unlike the case of the full building where the similarly defined zeta function is an infinite product converging to a rational function, the above zeta function on an apartment is a finite product and the reciprocal of the zeta function is a polynomial.
A straight forward computation similar to the case of graphs gives the following theorem.
\begin{theorem} \label{infinitesum1}
For $|u|\ll 1$, the following identity holds
$$ Z(\A_\Ga,\pi,u) = \exp\left(\sum_{n=1}^\infty \frac{\tilde{N}_n}{n}u^{n}\right).$$
\end{theorem}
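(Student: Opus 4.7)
The plan is to run the standard Euler-product-to-exponential calculation, exactly as for the Ihara zeta function of a finite graph; the rational structure and the $\pi$-direction constraint do not interfere with the bookkeeping. First I would take the formal logarithm of the defining product and expand:
$$
\log Z(\A_\Ga,\pi,u) \;=\; -\sum_{[c]} \log\bigl(1-u^{l(c)}\bigr) \;=\; \sum_{[c]} \sum_{k=1}^\infty \frac{u^{k\,l(c)}}{k},
$$
where $[c]$ ranges over equivalence classes of primitive rational closed $\pi$-geodesic walks in $\A_\Ga$. Writing $p_m$ for the number of such equivalence classes of normalized length $m$, this becomes $\sum_m p_m \sum_k u^{km}/k$.

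The combinatorial heart is to match the coefficient of $u^n/n$ on both sides. Every closed $\pi$-geodesic walk $w$ in $C_\pi$ (with a specified starting vertex) of normalized length $n$ has a unique decomposition $w = \tilde c^{\,k}$, where $\tilde c$ is a primitive closed geodesic walk (again with a specified starting vertex) of normalized length $m=n/k$; the geodesic condition is preserved under taking powers and under picking out a primitive factor, since the direction of each lifted segment is unchanged. By primitivity the cyclic rotation action on the starting vertex is free, so each equivalence class of primitive closed geodesic walks of length $m$ contains exactly $m$ distinct walks (as objects with a starting point). Therefore
$$
\tilde N_n \;=\; \sum_{k\,m = n} m \cdot p_m,
$$
and consequently
$$
\sum_{n=1}^\infty \frac{\tilde N_n}{n}\,u^n \;=\; \sum_m p_m \sum_{k=1}^\infty \frac{u^{km}}{k},
$$
which agrees with the expansion of $\log Z(\A_\Ga,\pi,u)$ above.

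For convergence, since $\A_\Ga$ is a finite simplicial torus or Klein bottle the set of primitive equivalence classes is finite (indeed $Z$ is a finite product), so $p_m$ vanishes for large $m$ and $\tilde N_n$ grows at most linearly in $n$; hence the exponential series converges for $|u|$ sufficiently small and the formal identity becomes an analytic one. The only step requiring any care is the claim that each primitive equivalence class of normalized length $m$ has exactly $m$ representatives, which is where primitivity (no nontrivial rotational self-identification) is used and where the unique factorization $w = \tilde c^{\,k}$ must be invoked; but this is a standard fact about closed walks in combinatorial objects, so the overall argument is routine.
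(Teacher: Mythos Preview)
Your argument is correct and is precisely the ``straightforward computation similar to the case of graphs'' that the paper invokes in lieu of a proof; the paper does not supply any details beyond that remark. One small cosmetic point: you say $\A_\Ga$ is a torus or Klein bottle, but at this stage of the paper no rank-two assumption has been made --- the relevant fact you actually use is only that $\A_\Ga$ is a finite complex, so the Euler product is finite and convergence is immediate.
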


\def \Irr {\mathrm{Irr}}
\def \unr {\mathrm{unr}}
\subsection{ L-functions of type $\pi$ }
Denote by $\Irr^\unr(\Ga\backslash W_\ext)$ the set of irreducible unramified representations of $W_\ext$ in  $L^2(\Ga \bs W_\ext)$ (counting the multiplicities).
{Define the $L$-function of type $\pi$ associated to $\Ga\bs W_\ext$ as}
$$ L(\Ga\bs W_\ext, \pi, u) := \prod_{\rho \in \Irr^\unr(\Ga\backslash W_\ext)} L_\ext(\rho, \pi, u)= \prod_{\rho \in \Irr^\unr(\Ga \backslash W_\ext)} \prod_{\la \in \wt(\pi)} (1- \chi_\rho( \la) u )^{-1}.$$
Like zeta functions of geodesic walks, the $L$-function $L(\Ga\bs W_\ext ,\pi, u)$ also can be expressed in terms of closed walks as follows.
\begin{theorem} \label{infinitesum2}
When $|u|<1$, the following identity holds:
{
$$ L(\Ga \bs W_\ext, \pi,  u) =(1-u)^{-\epsilon(\pi) N}  \exp\left( \sum_{n=1}^\infty \frac{N_n}{n} u^{n} \right).$$
Here $\epsilon(\pi) = |\wt(\pi) \smallsetminus \wt'(\pi)|$ is the multiplicity of the trivial weight in $\wt(\pi)$ and $N$ is the cardinality of $\Ga \bs W_{\ext}/W$.}
\end{theorem}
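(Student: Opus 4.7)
The plan is to take logarithms of both sides and compare coefficients of $u^n$. Since $\chi_\rho\colon \La\to\C^\times$ is a homomorphism, $\chi_\rho(\la)^n=\chi_\rho(n\la)$, and
\[
\log L(\Ga\bs W_\ext,\pi,u)=\sum_{n\ge 1}\frac{u^n}{n}\sum_{\rho\in \Irr^\unr(\Ga\bs W_\ext)}\sum_{\la\in\wt(\pi)}\chi_\rho(n\la),
\]
so the theorem reduces to proving
\[
(\star)\qquad \sum_{\rho}\sum_{\la\in\wt(\pi)}\chi_\rho(n\la)\;=\;\epsilon(\pi)N+N_n\qquad(n\ge 1).
\]
The $\epsilon(\pi)$ trivial weights contribute $\epsilon(\pi)\cdot|\Irr^\unr(\Ga\bs W_\ext)|$; since each unramified $\rho$ carries a one-dimensional $W$-fixed line, this cardinality equals $\dim L^2(\Ga\bs W_\ext)^W=|\Ga\bs W_\ext/W|=N$, accounting for the $\epsilon(\pi)N$ term.

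For the remaining weights I would introduce the operator
\[
T\;:=\;\sum_{\la\in\wt'(\pi)} R(n\la)\qquad\text{on }L^2(\Ga\bs W_\ext),
\]
where $R(\mu)$ is right translation by $\mu\in W_\ext$, and compute $\tr T$ on the $W$-fixed subspace two ways. Using the identity $w\cdot n\la\cdot w^{-1}=nw(\la)$ inside $W_\ext=\La\rtimes W$ together with the $W$-invariance of $\wt'(\pi)$, one checks that $T$ preserves $L^2(\Ga\bs W_\ext)^W\cong L^2(\Ga\bs\La)$.

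For the spectral side, realize the unramified $\rho$ inside $\Ind_\La^{W_\ext}\chi_\rho$ with $W$-fixed vector $v_0(vw):=\chi_\rho(v)$ for $v\in\La,\,w\in W$. A direct computation yields
\[
(Tv_0)(vw)\;=\;v_0(vw)\sum_{\la\in\wt'(\pi)}\chi_\rho\bigl(nw(\la)\bigr)\;=\;v_0(vw)\sum_{\la\in\wt'(\pi)}\chi_\rho(n\la),
\]
the last equality by reindexing via $W$-invariance of $\wt'(\pi)$. Hence $T$ acts on $V_\rho^W$ by the scalar $\sum_\la\chi_\rho(n\la)$, and summing gives $\tr T=\sum_\rho\sum_{\la\in\wt'(\pi)}\chi_\rho(n\la)$. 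For the geometric side, the diagonal entry of $T$ in the orbit basis $\{e_{[v]}\}_{[v]\in\Ga\bs\La}$ of $L^2(\Ga\bs\La)$ counts $\la\in\wt'(\pi)$ with $v+n\la\in \Ga v$. A short check shows $\Ga\subset W_\aff$ torsion-free acts freely on $\La$ (any nontrivial $(\mu,w)\in\Ga$ fixing a vertex would yield a nontrivial translation $(\mu,w)^{|w|}$ fixing the same vertex, forcing $(\mu,w)$ to have finite order), so each such $\la$ matches a unique $\ga\in\Ga$ with $\ga v=v+n\la$, which in turn corresponds to exactly one closed $\pi$-walk in $C_\pi$ of normalized length $n$ starting at $[v]$ in direction $\la$. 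Summing over $[v]$ gives $\tr T=N_n$.

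Equating the two trace expressions proves $(\star)$, and exponentiating yields the claimed identity via $-\epsilon(\pi)N\log(1-u)+\sum_n N_n u^n/n$. Convergence for $|u|<1$ is automatic, since finite-dimensionality of $L^2(\Ga\bs W_\ext)$ forces $|\chi_\rho(\la)|=1$ and $N_n\le N\cdot|\wt'(\pi)|$ is uniformly bounded. The main obstacle I expect is the spectral step: verifying that $T$ descends to the $W$-fixed subspace and that its eigenvalue on $V_\rho^W$ is the symmetric expression $\sum_\la\chi_\rho(n\la)$, both of which rely in an essential way on the $W$-invariance of $\wt'(\pi)$; the geometric side is straightforward once freeness of $\Ga$ on $\La$ is in hand.
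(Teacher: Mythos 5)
Your proposal is correct and follows essentially the same route as the paper's proof. The paper defines the Hecke operator $B_n = W\tilde\lambda^n W$ acting on $L^2(\Ga\bs\Lambda)$ by $f(x)\mapsto\sum_{\la\in\wt'(\pi)}f(x+n\la)$, which is exactly your $T$ restricted to the $W$-fixed subspace, and likewise computes $\tr(B_n)$ spectrally (eigenvalue $\sum_\la\chi_\rho(n\la)$ on the one-dimensional $W$-fixed line of each unramified $\rho$) and geometrically (diagonal entries in the orbit basis, using that torsion-freeness of $\Ga\subset W_\aff$ forces the action on $\Lambda$ to be free) to conclude $\tr(B_n)=N_n$; the only cosmetic differences are that you work with the logarithm and coefficient identity $(\star)$ rather than assembling the exponential directly, and you spell out the spectral eigenvalue computation and the freeness argument a bit more explicitly.
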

\begin{proof}
Recall that
\begin{align*}
N_n &= \mbox{the number of {rational} closed walks in $\A_\Ga$ of normalized length $n$ which lift to } \\
&\quad \mbox{ a part of a $\lambda$-geodesic in $\A$ for some $\la \in \wt'(\pi)$} \\
&= \sum_{\la \in \wt'(\pi)} \sum_{\gamma \in \Ga} \# \{ x \in \Ga\bs \Lambda : \ga(x)   = x+ n\la\}.
\end{align*}

Fix a weight $\tilde{\lambda}$ in $\wt'(\pi)$. Consider the action of the Hecke operator $B_n = W\tilde{\lambda}^n W = \cup_{\lambda \in \wt'(\pi)} \lambda^n W$   on $L^2( W_\ext /W)=L^2( \Lambda)$ which
 sends $f(x) \in L^2( \Lambda)$ to
$$ B_n f(x)= \sum_ {\lambda \in \wt'(\pi)}f(x + n\lambda).$$
Let $\rho$ be an unramified irreducible representation of $W_\ext$. By definition it occurs in the induced representation $\Ind_{\Lambda}^{W_\ext} \chi_\rho$ for some character $\chi_\rho$ of $\Lambda$. Then $B_n$ sends a $W$-fixed vector $v$ of $\rho$ to
$$ B_n(v)= \sum_ {\la \in \wt'(\pi)}\chi_\rho(n \lambda )v.$$
Since the space of $W$-invariant vectors for $\rho$ is $1$-dimensional,
 this shows that {$\Irr^\unr(\Ga \backslash W_\ext)$ and $\Ga \bs W_\ext / W$ have the same cardinality $N$} and the trace of $B_n$ on  $L^2(\Ga \bs W_\ext /W) = L^2(\Ga \bs \Lambda)$ is
$$ \tr(B_n) =\sum_{\rho \in \Irr^\unr(\Ga \backslash W_\ext)} \sum_ {\la \in \wt'(\pi)}\chi_\rho(\lambda )^n.$$
On the other hand, by choosing the characteristic functions on the $\Ga$-orbits of $\Lambda$ as a basis of  $L^2(\Ga \bs \Lambda)$, we can interpret the trace of $B_n$ as
\begin{align*}
\tr(B_n) &= \sum_{x \in \Ga \bs \Lambda} \# \{\la \in \wt'(\pi): \Ga(x) = \Ga(x+n\lambda)\}. \\
\end{align*}
Note that if $x \in \Lambda$ is such that $\ga_1 (x)= \ga_2 (x)$ for some $\ga_1,\ga_2 \in \Ga$, then we have $ \ga_1^{-1}\ga_2 \in \mathrm{Stab}(x)\cap \Ga = \{\rm{id}\}$ since $\Ga$ is torsion-free.
Therefore, when $\Ga(x) = \Ga(x+n\lambda)$, there exists a unique $\ga \in \Ga$ such that $\ga(x)=x+n \lambda$. Hence we conclude
\begin{align*}
\tr(B_n)&= \sum_{x \in \Ga \bs \Lambda} \# \{ (\lambda,\ga) \in \wt'(\pi)\times \Ga ~:~ \ga(x)  = x+ n \lambda\}= N_n.
\end{align*}
Consequently, by letting $\epsilon(\pi) = |\wt(\pi) \smallsetminus \wt'(\pi)|$, we have
{
\begin{align*}
\sum_{n=1}^\infty N_n \frac{u^n}{n}
&= \sum_{n=1}^\infty ~\sum_{\rho \in \Irr^\unr(\Ga \backslash W_\ext)}~ \sum_{\la \in \wt'(\pi)} \chi_\rho( \la)^n \frac{u^n}{n} \\
&= \sum_{\rho \in \Irr^\unr(\Ga \backslash W_\ext)} ~\sum_{\la \in \wt'(\pi)} \log(1-\chi_\rho( \la) u)^{-1}\\
&=\log\left(\prod_{\rho \in \Irr^\unr(\Ga \backslash W_\ext)} ~\prod_{\la \in \wt'(\pi)} (1-\chi_\rho( \la) u)\right)^{-1}  \\
&=\log(1-u)^{N\epsilon(\pi)} { + } \log \left(\prod_{\rho \in \Irr^\unr(\Ga \backslash W_\ext)} ~\prod_{\la \in \wt(\pi)} (1-\chi_\rho( \la) u)\right)^{-1}\\
&= \log(1-u)^{N\epsilon(\pi)} { + } \log L(\Ga \bs W_\ext,\pi, u),
\end{align*}
}
which completes the proof.
\end{proof}

\subsection{Zeta and L-functions for simplicial tori}

Suppose $\Ga$ is a subgroup of $\Lambda_r$ and $[\Lambda:\Ga]=N$, then  $\A_\Ga$ is a simplicial torus with $N$ vertices. {Let $\widehat{\Lambda / \Ga}$ be the set of characters of $\Lambda$ trivial on $\Ga$.
Then
$$ L^2(\Ga \bs W_\ext) = \bigoplus_{\chi \in \widehat{\Lambda / \Ga}} \Ind_{\Lambda}^{W_{\ext}} \chi.$$
%\kang{
%Let $C(W_\ext)$ be $\C$-valued functions on $W_\ext$, which induced a representation %of $W_\ext$ by left multiplication.
%Let $C(\Ga \backslash \Lambda)$ be $\C$-valued functions on $\Lambda$ which is %invariant under the action of $\Ga$. As a representation of $\Lambda$,
%$$ C(\Ga \backslash \Lambda) = \bigoplus_{\chi \in \widehat{\Lambda / \Ga}} \chi.$$
%Hence, as a representation of $W_\ext$,
%$$ C( \Ga \backslash W_\ext) = \Ind^{W_{\ext}}_\Lambda C(\Ga \backslash \Lambda) =
%\bigoplus_{\chi \in \widehat{\Lambda / \Ga}} \Ind_{\Lambda}^{W_{\ext}} \chi.$$}

\noindent Therefore,
\begin{align*}
 L(\Ga\bs W_\ext,\pi , u)
 &= \prod_{\chi \in \widehat{\Lambda / \Ga}}\prod_{\la \in \wt(\pi)} (1- \chi( \la) u )^{-1}=\prod_{\la \in \wt(\pi)}   \left( \prod_{\chi \in \widehat{\Lambda / \Ga}}(1- \chi(\la) u)^{-1} \right).
\end{align*}

Let $\rho_{\mathrm{reg}}$ be the regular representation of $\Lambda / \Ga$, then
$$\prod_{\chi \in \widehat{\Lambda / \Ga}}(1- \chi(\la) u) =\det(1- \rho_{\mathrm{reg}}(\la) u)$$
and
\begin{align*}
(1-u)^{\epsilon(\pi) N}  L(\Ga \bs W_\ext,\pi , u) &=\prod_{\la \in \wt'(\pi)}  \det(1- \rho_{\mathrm{reg}}(\la) u)^{-1}\\
&=\prod_{\la \in \wt'(\pi)}   (1- u^{\deg(\la)})^{-N/\deg(\la)}.
\end{align*}
Here $\deg(\la)$ is the order of $\la$ in the quotient group  $\Lambda / \Ga$.
On the other hand, since every closed walk is closed by a translation, every element in $C_\pi$ has no corner at the starting vertex.
Therefore,
$$ N_n= \tilde{N}_n  \quad \mbox{ for all $n$.}$$
Together with Theorems \ref{infinitesum1} and \ref{infinitesum2}, we summarize the above results as
\begin{theorem} \label{maintheorem0}
Suppose $\Ga$ is a finite index subgroup of $\Lambda_r$ with $N = [\Lambda : \Ga]$ and $\pi$ is a {minuscule  or  fundamental quasi-minuscule representation} of $\hat{G}(\C)$. Then
$$ Z(\A_\Ga,\pi,u)=(1-u)^{\epsilon(\pi) N} L(\Ga \bs W_\ext, \pi , u) =  \prod_{\la \in \wt'(\pi)}  (1- u^{\deg(\la)})^{-N/\deg(\la)}. $$
Here $\epsilon(\pi)$ is the multiplicity of the trivial weight in $\wt(\pi)$.
\end{theorem}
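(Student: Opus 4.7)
The proof will proceed in three conceptual stages: a spectral decomposition of $L^2(\Ga\backslash W_\ext)$, an identification of the resulting product of characters with a characteristic polynomial of the regular representation, and a geometric observation that translations produce no corners. All of these steps are essentially already sketched in the paragraphs preceding the statement; the task is to assemble them.

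First, since $\Gamma \subseteq \Lambda_r \subseteq \Lambda = Y(T)$ is a finite-index subgroup of the translation lattice and consists of translations, the coset space $\Lambda/\Gamma$ is a finite abelian group and the regular representation of $W_\ext$ on $L^2(\Gamma \backslash W_\ext)$ decomposes as
$$ L^2(\Ga \bs W_\ext) \;=\; \bigoplus_{\chi \in \widehat{\Lambda / \Ga}} \Ind_{\Lambda}^{W_{\ext}} \chi, $$
each summand containing a unique unramified irreducible subrepresentation with Satake parameter $\chi$. Plugging this into the definition of $L(\Gamma \backslash W_\ext, \pi, u)$ and interchanging products yields
$$ L(\Ga\bs W_\ext,\pi , u) \;=\; \prod_{\la \in \wt(\pi)} \prod_{\chi \in \widehat{\Lambda / \Ga}}(1- \chi( \la) u)^{-1}. $$

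Second, I would recognize the inner product as the reciprocal of the characteristic polynomial of $\rho_{\mathrm{reg}}(\lambda)$, where $\rho_{\mathrm{reg}}$ is the regular representation of $\Lambda/\Gamma$. Writing $\langle \lambda \rangle \subseteq \Lambda/\Gamma$ for the cyclic subgroup generated by the image of $\lambda$, the regular representation of $\Lambda/\Gamma$ restricts on $\langle \lambda \rangle$ to $N/\deg(\lambda)$ copies of the regular representation of $\langle \lambda \rangle$, giving
$$ \prod_{\chi \in \widehat{\Lambda / \Ga}}(1- \chi(\la) u) \;=\; \det(1- \rho_{\mathrm{reg}}(\la) u) \;=\; (1- u^{\deg(\la)})^{N/\deg(\la)}. $$
Separating the contribution of the trivial weight (which occurs with multiplicity $\epsilon(\pi)$ and contributes $(1-u)^{\epsilon(\pi) N}$) produces the second equality of the theorem:
$$ (1-u)^{\epsilon(\pi) N} L(\Ga \bs W_\ext,\pi, u) \;=\; \prod_{\la \in \wt'(\pi)} (1- u^{\deg(\la)})^{-N/\deg(\la)}. $$

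Third, to obtain the first equality (linking to $Z(\A_\Gamma,\pi,u)$), the key geometric point is that every element of $\Gamma \subseteq \Lambda_r$ acts on $\A$ as a pure translation, so any lift of a closed walk $c$ in $C_\pi$ from $v$ to $\gamma(v)$ extends to $\gamma^2(v)$ in the same direction; thus no walk in $C_\pi$ has a corner at its base point, whence $\tilde{N}_n = N_n$ for every $n$. Combining Theorem \ref{infinitesum1} and Theorem \ref{infinitesum2} then gives
$$ Z(\A_\Ga,\pi,u) \;=\; \exp\!\left(\sum_{n \geq 1} \tfrac{\tilde{N}_n}{n} u^n\right) \;=\; \exp\!\left(\sum_{n \geq 1} \tfrac{N_n}{n} u^n\right) \;=\; (1-u)^{\epsilon(\pi) N} L(\Ga \bs W_\ext, \pi, u). $$
The only step requiring any real thought is the passage $\prod_\chi (1-\chi(\lambda)u) = (1-u^{\deg(\lambda)})^{N/\deg(\lambda)}$, and even this is routine finite abelian harmonic analysis; the rest is bookkeeping. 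There is no serious obstacle in the torus case, which is why the Klein-bottle case is treated separately in later sections.
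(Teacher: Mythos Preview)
Your proposal is correct and follows essentially the same approach as the paper: spectral decomposition of $L^2(\Gamma\backslash W_\ext)$ over characters of $\Lambda/\Gamma$, identification of the inner product with $\det(1-\rho_{\mathrm{reg}}(\lambda)u)=(1-u^{\deg(\lambda)})^{N/\deg(\lambda)}$, and the observation that translations produce no corners so $\tilde N_n=N_n$, followed by an appeal to Theorems~\ref{infinitesum1} and~\ref{infinitesum2}. The paper's argument in \S3.5 is exactly this assembly.
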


{When $G=\PGL_n(F)$ and $\pi$ is the standard representation of $\SL_n(\C)$, the above result agrees with  Theorem 5.2 in \cite{DK}.}

\section{Zeta functions of geodesic walks and L-functions for Klein bottles}
Next assume $\Ga$  is not contained in $\Lambda_r$. Then the situation is more complicated.
We shall assume that the apartment of $G$ is two-dimensional.
In this case, $\A_\Ga$ is a Klein bottle.
Recall that there are three types of two-dimensional apartments: $\tilde{A}_2, \tilde{B_2} = \tilde{C_2},$ or $\tilde{G_2}$.
In the remaining discussion, we only consider $\tilde{A}_2$ and $\tilde{C_2}$ cases.

For type  $\tilde{A}_2$, we shall be concerned with two minuscule representations $\pi_1$ and $\pi_2$ of $\hat{G}(\C)$ with weights as follows:

\begin{center}
\begin{tikzpicture}[scale =.75]
\clip (-4.2,-4.2) rectangle ++ (4.6,4.6);
\foreach \y in {-10,-9,...,10}
{
\draw plot (\x+\y,1.732*\x);
\draw plot (-\x+\y,1.732*\x);
\draw plot (-\x,0.866*\y);
}
\draw[red, very thick,->] (-2,-1.732)--(-1,-1.732);
\draw[red, very thick,->] (-2,-1.732)--(-2.5,-2.598);
\draw[red, very thick,->] (-2,-1.732)--(-2.5,-.866);
 \node at (-1,-1.4) {$\alpha$};
\end{tikzpicture}
\hspace{.8cm}
\begin{tikzpicture}[scale =.75]
\clip (-4.2,-4.2) rectangle ++ (4.6,4.6);
\foreach \y in {-10,-9,...,10}
{
\draw plot (\x+\y,1.732*\x);
\draw plot (-\x+\y,1.732*\x);
\draw plot (-\x,0.866*\y);
}
\draw[blue, very thick,->] (-2,-1.732)--(-1.5,-2.598);
\draw[blue, very thick,->] (-2,-1.732)--(-1.5,-.866);
\draw[blue, very thick,->] (-2,-1.732)--(-3,-1.732);
  \node at (-1.5,-.5) {$\beta$};
\end{tikzpicture}\\
the weights of $\pi_1$ \hspace{2cm} the weights of $\pi_2$
\end{center}

\noindent Note that $\wt(\pi_1)$ and $\wt(\pi_2)$ are opposite. Further a weight $\alpha \in \wt(\pi_1)$ together with a weight $\beta  \in \wt(\pi_2)$ not equal to $-\alpha$ form a basis of $\Lambda$.

For type $\tilde{C}_2$, we shall consider
 the spin representation $\pi_\spin$ of $\hat{G}(\C)$ and
 the standard representation $\pi_\st$ of $\hat{G}(\C)$. Their weights are shown below.
 Note that $\pi_\spin$ is minuscule while $\pi_\st$ is quasi-minuscule.

\begin{center}
\begin{tikzpicture}[scale =1]
\clip (-3.7,-3.7) rectangle ++ (3.6,3.6);
\foreach \y in {-10,-9,...,10}
{
\draw plot (\x,\y);
\draw plot (\y,\x);
\draw plot (\x+\y,\x);
\draw plot (-\x+\y,\x);
}
\draw[red, very thick,->] (-2,-2)--(-1,-2);
\draw[red, very thick,->] (-2,-2)--(-3,-2);
\draw[red, very thick,->] (-2,-2)--(-2,-1);
\draw[red, very thick,->] (-2,-2)--(-2,-3);
 \node at (-1.5,-1.8) {$\alpha$};

\end{tikzpicture}
\hspace{.8cm}
\begin{tikzpicture}[scale =1]
\clip (-3.7,-3.7) rectangle ++ (3.6,3.6);
\foreach \y in {-10,-9,...,10}
{
\draw plot (\x,\y);
\draw plot (\y,\x);
\draw plot (\x+\y,\x);
\draw plot (-\x+\y,\x);
}
\draw[blue, very thick,->] (-2,-2)--(-1,-1);
\draw[blue, very thick,->] (-2,-2)--(-3,-3);
\draw[blue, very thick,->] (-2,-2)--(-1,-3);
\draw[blue, very thick,->] (-2,-2)--(-3,-1);
\filldraw [blue] (-2,-2) circle (4pt);
\node at (-1.5,-.9) {$\beta$};
\end{tikzpicture}\\
the weights of $\pi_\spin$ \hspace{1cm} the weights of $\pi_\st$
\end{center}

\noindent In this case a weight $\alpha \in \wt(\pi_\spin)$ together with a nontrivial weight $\beta \in \wt'(\pi_\st)$ form a basis of $\Lambda$.

\subsection{The group structure of $\Ga$}
In this case, $\Ga$ is generated by a translation $t$ and a glide reflection $\sigma$, which is a composition of a linear reflection and a translation, and $\sigma$ does not commute with $t$ \cite{NS}.
Moreover, $t$ and $\sigma^2$ generate the index-$2$ translation subgroup $\Ga_0$ of $\Ga$.

\begin{proposition}
$\sigma^{-1} t \sigma = \sigma^{4k}t^{-1}$ for some integer $k$.
\end{proposition}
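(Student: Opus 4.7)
The plan is to analyze the conjugation action of $\sigma$ on the translation subgroup $\Ga_0 = \langle t, \sigma^2 \rangle$ and then to use the torsion-freeness of $\Ga$ to force a parity condition on the resulting exponent.

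First, I set up coordinates: write $\sigma = t_v \cdot s$ in $W_\aff = \La_r \rtimes W$, where $s \in W$ is the reflection fixing a line $\ell_0$ through the origin and $v \in \La_r$.  Decompose $v = v_\parallel + v_\perp$ and $w = w_\parallel + w_\perp$ along and perpendicular to $\ell_0$, where $t = t_w$.  A direct calculation gives $\sigma^2 = t_{2 v_\parallel}$, so $v_\parallel \neq 0$ because $\sigma$ is a genuine glide reflection; and since $\{t, \sigma^2\}$ generates a rank-$2$ lattice, one also has $w_\perp \neq 0$ (equivalently, $\sigma$ and $t$ do not commute).

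Next, note that $\sigma^{-1} t \sigma$ is a translation, and every translation in $\Ga$ lies in $\Ga_0$; on the other hand, a direct computation gives $\sigma^{-1} t \sigma = t_{s(w)} = t_{w_\parallel - w_\perp}$.  Writing $s(w) = a\, w + 2c\, v_\parallel$ with $a, c \in \Z$ and comparing the components perpendicular to $\ell_0$ forces $a = -1$ (using $w_\perp \neq 0$); the parallel comparison then yields $w_\parallel = c v_\parallel$.  Hence $\sigma^{-1} t \sigma = t^{-1} \sigma^{2c}$.

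It remains to show that $c$ is even, and this is the main obstacle.  Suppose for contradiction that $c$ is odd, so $m := (c-1)/2 \in \Z$, and set $\gamma := \sigma \cdot t^{-1} \cdot \sigma^{2m} \in \Ga$.  Then $\sigma^{2m} = t_{(c-1) v_\parallel}$ is a translation; expanding $\gamma$ as an isometry and using $w_\parallel = c v_\parallel$, the translation parts parallel to $\ell_0$ cancel exactly, leaving $\gamma(x) = s(x) + (v_\perp + w_\perp)$, which is a pure reflection of order two.  Since $\gamma \neq \mathrm{id}$, this contradicts torsion-freeness.  Therefore $c = 2k$ for some $k \in \Z$, and because $\sigma^{4k}$ is a translation it commutes with $t^{-1}$, giving $\sigma^{-1} t \sigma = \sigma^{4k} t^{-1}$.
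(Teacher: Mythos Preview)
Your proof is correct and follows essentially the same strategy as the paper, but with a more explicitly geometric execution in place of the paper's terser group-theoretic argument. Where the paper writes $\sigma^{-1}t\sigma = \sigma^{2m}t^{n}$ and argues $n=\pm 1$ abstractly (because $\sigma^{-1}t\sigma$ and $\sigma^2$ must again generate $\Ga_0$), then eliminates $n=1$ via a conjugation identity, you obtain $a=-1$ directly by comparing components perpendicular to the reflection axis. For the parity step, the paper simply asserts that $t\sigma^{-m}$ has order two when $m$ is odd and leaves the verification to the reader; your element $\gamma = \sigma\, t^{-1}\sigma^{c-1}$ is in fact the very same isometry (both equal $x \mapsto s(x) + v_\perp + w_\perp$), so the two arguments coincide at the decisive point. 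Your version has the merit of being self-contained and making the torsion explicit; the paper's is shorter but relies on an unwritten check.
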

\begin{proof}
Since $\sigma^{-1} t \sigma$ is a translation in $\Ga_0$, it is of the form $\sigma^{2m} t^{n}$.
As $\sigma^{-1} t \sigma$ and $\sigma^2$ generate $\sigma^{-1}\Ga_0\sigma = \Ga_0$, we have $n=1$ or $-1$. If $n=1$, then
$$ t = \sigma^{-2} t \sigma^2 = \sigma^{-1}( \sigma^{2m} t) \sigma= \sigma^{4m} t.$$
This implies $m=0$ and hence $\sigma$ commutes with $t$, a contradiction. Therefore $n=-1$. Finally, if $m$ is odd, then one can check directly that $t \sigma^{-m}$ is an element of order two, which is a contradiction. This proves that $m$ is even, as asserted.
\end{proof}
Replace the generator $t$ by  $t\sigma^{-2k}$ such that the relation becomes $t \sigma = \sigma t^{-1}$.
In this case, the direction of the translation $t$ is perpendicular to the reflection axis of $\sigma$. We summarize the above discussion in the following theorem.
\begin{theorem}
If $\A_\Ga$ is a Klein bottle, then $\Ga\cong \langle \sigma, t | t\sigma = \sigma t^{-1}\rangle.$
Consequently, a fundamental domain of $\A_\Ga$ can be chosen to be a rectangle.
\end{theorem}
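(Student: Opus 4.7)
The plan is to leverage the preceding proposition, which already supplies the key structural relation $\sigma^{-1} t \sigma = \sigma^{4k} t^{-1}$, and then perform a change of generator to normalize this into the Klein bottle presentation. First I would pass to additive notation inside the translation subgroup $\Ga_0$. Writing $T, S \in \Ga_0$ for the translations $t$ and $\sigma^2$ respectively, conjugation by $\sigma$ is a linear automorphism of $\Ga_0$, and the proposition says $\sigma^{-1} T \sigma = 2kS - T$, while trivially $\sigma^{-1} S \sigma = S$. Setting $T' = T - kS$, a direct computation gives $\sigma^{-1} T' \sigma = (2kS - T) - kS = -T'$, i.e., back in multiplicative notation, $t' := t\sigma^{-2k}$ satisfies $\sigma^{-1} t' \sigma = (t')^{-1}$, equivalently $t'\sigma = \sigma (t')^{-1}$. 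Since $t = t' \sigma^{2k}$, the pair $\{t', \sigma\}$ still generates $\Ga$.

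Next I would argue that $t'\sigma = \sigma (t')^{-1}$ is the only relation needed. The subgroup $\langle t', \sigma^2 \rangle$ is $\Ga_0$, which is a free abelian group of rank $2$ (it is the translation lattice of the torus $\Ga_0 \backslash \A$, a double cover of the Klein bottle $\A_\Ga$); and $\Ga / \Ga_0 \cong \Z/2\Z$ is generated by the image of $\sigma$. The abstract group $\langle \sigma, t \mid t\sigma = \sigma t^{-1}\rangle$ has the same structure: its subgroup generated by $t$ and $\sigma^2$ is free abelian of rank $2$ (the defining relation forces $t$ and $\sigma^2$ to commute and imposes no further relation among them), with quotient of order $2$. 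The obvious surjection from the abstract group onto $\Ga$ is therefore an isomorphism by comparing sizes on each coset, yielding the claimed presentation.

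Finally, for the geometric consequence, the relation $\sigma^{-1} t' \sigma = (t')^{-1}$ shows that the linear part of $\sigma$ sends the translation vector of $t'$ to its negative. The linear part of the glide reflection $\sigma$ is an orthogonal reflection, which decomposes $V$ into its $(+1)$-eigenspace (the reflection axis, along which $\sigma^2$ translates) and the perpendicular $(-1)$-eigenspace. Hence the translation vector of $t'$ lies in this perpendicular direction. Choosing the fundamental rectangle with one pair of sides parallel to the reflection axis of $\sigma$ (of length equal to the translation length of $\sigma$) and the other pair parallel to $t'$ (of length equal to the translation length of $t'$) then gives a fundamental domain for $\Ga$ acting on $\A$.

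The only mildly subtle step is verifying that the relation $t'\sigma = \sigma (t')^{-1}$ is a complete set of relations, rather than just a relation that holds; but this follows immediately once one identifies $\Ga_0 \subset \Ga$ with the rank-$2$ translation lattice and matches the short exact sequence $1 \to \Z^2 \to \Ga \to \Z/2 \to 1$ on both sides. The rest is essentially the linear-algebra observation that an orthogonal reflection which negates a vector must fix the perpendicular direction, which is what turns the parallelogram fundamental domain into a rectangle.
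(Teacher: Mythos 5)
Your proposal is correct and follows essentially the same route as the paper: use Proposition 4.1, replace $t$ by $t\sigma^{-2k}$ to normalize the relation to $t\sigma = \sigma t^{-1}$, and observe that the linear part of $\sigma$ negates the translation vector of the new $t$, forcing perpendicularity and hence a rectangular fundamental domain. The one place where you go beyond the paper is your explicit verification, via the short exact sequence $1 \to \Z^2 \to \Ga \to \Z/2 \to 1$, that $t\sigma = \sigma t^{-1}$ is a complete set of relations --- the paper leaves this implicit (relying on the standard fact, from the cited reference [NS], that the fundamental group of a Klein bottle has this presentation), whereas you make it self-contained; this is a welcome clarification but does not change the underlying argument.
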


From now on, we shall fix a choice of $t$ and $\sigma$ in $\Gamma$ satisfying the above theorem.

Next, we study the conjugacy class $[\ga]$ of $\ga \in \Gamma$. If $\ga$ is not a translation, then it is a glide reflection and it can be expressed as $t^n \sigma^m$ for some odd $m$.
Observe that if $n$ is even, then $t^n \sigma^m$ is conjugate to $\sigma^m$; if $n$ is odd, then $t^n \sigma^m$ is conjugate to $t \sigma^m = (t \sigma)^m $. This is summarized in
\begin{proposition}\label{conjugacyclass}
The conjugacy classes of glide reflections in $\Ga$ are $ [\sigma^m]$ and $[ (t \sigma)^m]$ with $m \in 2\Z+1$.
\end{proposition}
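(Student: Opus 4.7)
The plan is to work in the normal form $t^n\sigma^m$ afforded by the presentation $\Gamma=\langle \sigma,t\mid t\sigma=\sigma t^{-1}\rangle$, and to show directly that the conjugacy class of $t^n\sigma^m$ depends only on $m$ and on the parity of $n$, provided $m$ is odd (which is precisely the condition that $t^n\sigma^m$ be a glide reflection, since $\sigma^2$ is a translation).

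First I would record the two basic commutation identities that follow from the defining relation: $\sigma^{-1}t\sigma=t^{-1}$ and $\sigma t\sigma^{-1}=t^{-1}$, which inductively give $\sigma^m t^k=t^{(-1)^m k}\sigma^m$ for all integers $m,k$. Using this, I would compute the effect of conjugation by the two generators on an arbitrary element $t^n\sigma^m$ with $m$ odd:
\begin{align*}
t^k(t^n\sigma^m)t^{-k}&=t^{n+k}\sigma^m t^{-k}=t^{n+2k}\sigma^m,\\
\sigma(t^n\sigma^m)\sigma^{-1}&=(\sigma t^n\sigma^{-1})\sigma^m=t^{-n}\sigma^m.
\end{align*}
The first shows that we may shift $n$ by any even integer, and the second shows we may negate $n$; together they prove that the conjugacy class of $t^n\sigma^m$ depends only on $m$ and on the residue of $n$ modulo $2$. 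Since an arbitrary element of $\Gamma$ can be written uniquely in the form $t^n\sigma^m$ and is a glide reflection exactly when $m$ is odd, this reduces the problem to the two representatives $\sigma^m$ (the case $n$ even, take $n=0$) and $t\sigma^m$ (the case $n$ odd, take $n=1$).

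It remains to identify $t\sigma^m$ with $(t\sigma)^m$ when $m$ is odd. Here I would first compute $(t\sigma)^2=t\sigma t\sigma=t\cdot t^{-1}\sigma\cdot\sigma=\sigma^2$, so $(t\sigma)^{2j}=\sigma^{2j}$ for all $j$. For odd $m=2j+1$, this gives
\[
(t\sigma)^m=(t\sigma)^{2j}(t\sigma)=\sigma^{2j}t\sigma=t\sigma^{2j}\sigma=t\sigma^m,
\]
using $\sigma^{2j}t=t\sigma^{2j}$ from the commutation identity above. This identifies the second family with $[(t\sigma)^m]$, completing the classification.

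There is no real obstacle; the only thing to be careful about is being systematic with the sign conventions in $\sigma^m t^k=t^{(-1)^m k}\sigma^m$, since a single sign error would mix up the two conjugacy classes. If desired, one can add a brief remark that the two families are genuinely distinct by noting that any element of $[\sigma^m]$ has the form $t^{\text{even}}\sigma^m$ while any element of $[(t\sigma)^m]$ has the form $t^{\text{odd}}\sigma^m$, and the normal form $t^n\sigma^m$ is unique.
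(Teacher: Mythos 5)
Your proof is correct and follows the same route as the paper: the paper simply observes that $t^n\sigma^m$ (with $m$ odd) is conjugate to $\sigma^m$ when $n$ is even and to $t\sigma^m=(t\sigma)^m$ when $n$ is odd, and you have supplied exactly the conjugation computations (by $t^k$ and by $\sigma$, using $\sigma^m t^k = t^{(-1)^m k}\sigma^m$) that justify those observations. Your closing remark also correctly handles the distinctness of the classes, which the paper leaves implicit.
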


The glide reflection $\sigma$ can be written as  $\sigma(x)= \sigma_0(x)+ a\alpha + b \beta$, where $\sigma_0(x)=-x+2 \frac{(x,\alpha)}{(\alpha,\alpha)}\alpha$ is a linear reflection with $\alpha$ a nontrivial weight of
$\pi_1$ or $\pi_2$ for type $\tilde {A_2}$ case, and of $\pi_\spin$ or $\pi_\st$ for $\tilde {C_2}$ case, and $\beta$ is chosen from the nontrivial weights of the other representation such that $(\alpha, \beta)$ is maximal. Thus $\alpha$ and $\beta$ form a set of fundamental weights. 
Note that all glide reflections in $\Gamma$ have the same linear part, namely the reflection  $\sigma_0$. Thus $\alpha$ is %uniquely determined for $\tilde A_2$ case, and unique up to sign for $\tilde{C}_2$ case.  
unique up to sign. %\kang{\xout{and $\beta$ is unique once $\alpha$ is chosen.}} 
Note also that $a,b$ are integers such that $a \alpha + b \beta \in \Lambda_r$ and $\sigma^2(x)=x+ k \alpha$, where
\begin{eqnarray}\label{k}
k = 2a + 2 \frac{(\alpha,\beta)}{(\alpha,\alpha)}b = 2 a+ \n b
\end{eqnarray}
with
\begin{eqnarray}\label{gamman}
 \n = \frac{2(\alpha,\beta)}{(\alpha,\alpha)}
 \end{eqnarray}
depending only on the group $\Gamma$. Replacing %$t$ by $t^{-1}$ and $\sigma$ by $\sigma^{-1}$
$\alpha$ by $-\alpha$ and $\beta$ by $-\beta$ if necessary, we may assume $k>0$.
The positive integer $k$ is independent of the choice of $\sigma$ and $t$ and depends only on $\Ga$. Hence we denote it by $\k$. %\kang{Moreover, when $\alpha$ is a fundamental weight of $\pi$, call $\Ga$ together $\{\sigma,t\})$ of type $\pi$. By abuse of notation, we also call $\Ga$ is of type $\pi$.}
With the choice of $t$ and $\sigma$ fixed, the positivity of $k$ determines $\alpha$, which is a weight of a unique $\pi$. We say that $\Ga$ is of type $\pi$.
In this case, $\n$ is independent of  $\alpha \in \wt'(\pi)$ and it will also be denoted by $n_\pi$.

A fundamental domain for $\langle \sigma^2 \rangle \bs \A$ is the strip
$$\A_{\sigma^2} = \{x\alpha + y \beta :~ x\in [0,\k), y\in \R\},$$
 and a fundamental domain  for $\langle \sigma \rangle \bs \A$ within $\A_{\sigma^2}$ is given by
$$\A_\sigma:= \{x\alpha + y \beta : ~ x\in [0,\k), y<\frac{b}{2}\} \bigcup \{ x\alpha + \frac{b}{2}\beta:~ x \in [0, \frac{\k}{2})\}.$$

For our choice of $G$, the possible scenarios are as follows:

\begin{itemize}
\item [(a)] $G$ is of type $\tilde{A}_2$ with $\alpha \in \wt(\pi_1) \cup \wt(\pi_2)$.
Then $\n=n_{\pi_1}=n_{\pi_2} =1$ and $\Lambda_r =\{ s_1 \alpha + s_2 \beta ~:~ s_1  \equiv s_2 \mod 3\}.$

\item [(b)] $G$ is of type $\tilde{C}_2$  with $\alpha \in \wt'(\pi_\st)$ (and hence $\beta \in \wt(\pi_\spin)$).
Then $\n = n_{\pi_\st} = 1$ and $\Lambda_r =\Z \alpha \oplus 2\Z \beta.$

\item [(c)]  $G$ is of type $\tilde{C}_2$  with $\alpha \in \wt(\pi_\spin)$ (and hence $\beta \in \wt'(\pi_\st)$).
Then $\n = n_{\pi_\spin}=2$ and $\Lambda_r =2\Z \alpha \oplus \Z \beta.$
\end{itemize}

Let $L_0$ be the glide reflection axis of $\sigma$ consisting of all points $x$ satisfying $\sigma(x)=x+ \frac{\k}{2} \alpha$. Then $L_0 = \frac{b}{2}\beta + \R \alpha$.
Note that $b$ is odd if and only if $L_0$ is irrational. In this case, we also call $\sigma$ irrational.

\begin{proposition} \label{prop3}
The following statements are equivalent:
\begin{enumerate}
\item[(1)] $\sigma$ is irrational;
\item[(2)] $t \sigma$ is irrational;
\item[(3)] $\frac{\k}{\n}$ is an odd integer.
\end{enumerate}
Furthermore, when $G$ is of type $\tilde{C}_2$ and $\n=1$, then $\sigma$ and $t \sigma$ are always rational.
\end{proposition}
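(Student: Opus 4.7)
The plan is to establish (1) $\Leftrightarrow$ (3) by a direct case-by-case computation using the structure of $\Lambda_r$, and then deduce (2) $\Leftrightarrow$ (3) by applying the same criterion to the glide reflection $t\sigma$. The ``furthermore'' assertion will fall out of the case analysis for free.

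For (1) $\Leftrightarrow$ (3): recall from the preceding discussion that $\sigma$ is irrational iff $b$ is odd, that $\k = 2a + \n b$, and that $a\alpha + b\beta \in \Lambda_r$ since $\sigma \in W_\aff = \Lambda_r \rtimes W$. In cases (a) and (b) we have $\n = 1$, and so $\k/\n = 2a + b \equiv b \pmod 2$. In case (c) we have $\n = 2$, and the description $\Lambda_r = 2\Z\alpha \oplus \Z\beta$ forces $a$ to be even, giving $\k/\n = a+b \equiv b \pmod 2$. In each scenario, $b$ is odd iff $\k/\n$ is odd. Case (b) also supplies the ``furthermore'' statement: there $\Lambda_r = \Z\alpha \oplus 2\Z\beta$, so every lattice element has even $\beta$-coefficient, forcing $b$ even and $\sigma$ rational.

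For (2) $\Leftrightarrow$ (3), the key point is that $t\sigma$ is again a glide reflection in $\Ga$ (by Proposition \ref{conjugacyclass}), has the same linear part $\sigma_0$, and satisfies $(t\sigma)^2 = (\sigma t^{-1})(t\sigma) = \sigma^2 = \k\alpha$ as an immediate consequence of the relation $t\sigma = \sigma t^{-1}$. Since $\alpha$, $\beta$, $\n$, and $\k$ therefore all coincide with those of $\sigma$, applying the already-proved equivalence (1) $\Leftrightarrow$ (3) with $\sigma$ replaced by $t\sigma$ yields (2) $\Leftrightarrow$ (3) and extends the rationality conclusion in case (b) to $t\sigma$ as well. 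The only step requiring real care is the case split on $\Lambda_r$; the conceptual move is the observation that $\k$ and $\n$ depend only on $\Ga$, which reduces (2) $\Leftrightarrow$ (3) to (1) $\Leftrightarrow$ (3) without any separate computation for $t\sigma$.
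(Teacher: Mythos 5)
Your proof is correct and follows essentially the same route as the paper: establish $(1)\Leftrightarrow(3)$ by computing $\k/\n \pmod 2$ case-by-case using the congruence $\k = 2a + \n b$ and the structure of $\Lambda_r$ (which forces $a$ even when $\n = 2$ and $b$ even in case (b)), then transfer the conclusion to $t\sigma$ via the observation $(t\sigma)^2 = \sigma^2$, which shows $\k$ and $\n$ are unchanged. Your write-up is slightly more explicit than the paper's in splitting into the labeled cases (a), (b), (c) rather than merely $\n = 1$ versus $\n = 2$, but the underlying argument is the same.
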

\begin{proof}
 Suppose $\n=1$. Then $\k=2a+ \n b \equiv \n b \mod 2 \n $. Thus $\sigma$ is irrational if and only if $\frac{\k}{\n}$ is odd. In case that $G$ is of type $\tilde{C}_2$, $b$ is always even and hence $\sigma$ is rational.

Suppose $\n=2$. Since $\sigma \in \Lambda_r \rtimes W$, $a$ is even and $\k=2a+ \n b \equiv \n b \mod 2 \n.$
Thus $\sigma$ is irrational if and only if $\frac{\k}{\n}$ is odd, that is, (1) and (3) are equivalent.

Since $\sigma^2=(t \sigma)^2$, the above argument also holds for $t \sigma$ by (3).
\end{proof}

For $v = c \alpha + d \beta \in \Lambda_r$ with $d\neq 0$ and $\ga \in \Ga$, let
$$ \Lambda(\ga,v) = \{ x \in \Lambda :~\ga(x)=x+v\}.$$
We proceed to count the cardinality of $\Lambda(\ga,v)$, which will be used in the comparison of Langlands $L$-functions and the zeta function
of geodesic walks in the next subsection.

\begin{proposition}\label{prop7}
For an odd integer $m$ and $v= c \alpha +d \beta \in \Lambda_r$, the set $\A_\sigma \cap \Lambda(\sigma^m,v)$ is non-empty if and only if
$d>0$ and $(v,\alpha)= \frac{\k m}{2}(\alpha,\alpha)$, in which case it has cardinality $\k$. Moreover  $ \A_\sigma \cap \Lambda(\sigma^m,v)$ and
 $ \A_{t\sigma} \cap \Lambda( (t\sigma)^m,v)$ have the same cardinality.
\end{proposition}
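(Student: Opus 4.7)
The plan is to parametrize $\Lambda$ by the $\Z$-basis $\{\alpha,\beta\}$ and solve the equation $\sigma^m(x) = x + v$ coordinate-wise. Writing $x = c'\alpha + d'\beta$ and iterating $\sigma(y) = \sigma_0(y) + a\alpha + b\beta$ together with $\sigma^2(y) = y + \k\alpha$, I will obtain the closed form $\sigma^m(x) = \sigma_0(x) + (a + (m-1)\k/2)\alpha + b\beta$ for odd $m$. A direct computation using $\sigma_0(\beta) = \n\alpha - \beta$ (which follows from $\n = 2(\alpha,\beta)/(\alpha,\alpha)$) gives $\sigma_0(x) - x = d'\n\alpha - 2d'\beta$, so the equation $\sigma^m(x) = x + c\alpha + d\beta$ reduces to the pair
$$ d' = \frac{b-d}{2} \qquad\text{and}\qquad 2c + d\n = \k m. $$
The second identity is precisely the scalar condition $(v,\alpha) = \frac{\k m}{2}(\alpha,\alpha)$, while the first determines $d'$ and leaves $c'\in\Z$ as the only free parameter.

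Next I will verify integrality of $d' = (b-d)/2$, i.e.\ that $b \equiv d \pmod 2$. This is a short case-by-case check using descriptions (a)--(c) of $\Lambda_r$, the constraint $a\alpha + b\beta \in \Lambda_r$ (coming from $\sigma \in W_\aff$), and rewriting $2c + d\n = m(2a + b\n)$; reducing modulo $2$ yields $d \equiv b \pmod 2$ in each case. Then I intersect with $\A_\sigma$: since $\A_\sigma \subseteq \{d' \le b/2\}$ and $d' = (b-d)/2$, we get $d \ge 0$, and combined with the standing hypothesis $d \ne 0$ this upgrades to $d > 0$, placing $x$ in the interior portion of $\A_\sigma$ where $c'$ ranges over $\{0, 1, \ldots, \k-1\}$, yielding exactly $\k$ solutions. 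Conversely, $d < 0$ would force $d' > b/2$ and an empty intersection.

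For the comparison with $t\sigma$, the relation $t\sigma = \sigma t^{-1}$ gives $(t\sigma)^2 = \sigma^2$ and hence $(t\sigma)^m = t\sigma^m$ for odd $m$. The same relation, applied to the translation vector $t_0$ of $t$, forces $\sigma_0(t_0) = -t_0$, so $t_0 \perp \alpha$; writing $t_0 = -(f\n/2)\alpha + f\beta$, another pass through cases (a)--(c) shows $f$ is even, so $(f/2)\beta \in \Lambda$. I then verify that the shift $\phi(x) := x + (f/2)\beta$ sends $\A_\sigma$ bijectively to $\A_{t\sigma}$ (its glide axis translates by $(f/2)\beta$), and that $\phi$ carries $\Lambda(\sigma^m, v)$ bijectively to $\Lambda((t\sigma)^m, v)$; both checks are short using $(t\sigma)^m = t\sigma^m$ together with the closed form for $\sigma^m$ from the first step. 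The main obstacle is the lattice bookkeeping to ensure $(b-d)/2$ and $f/2$ are integers; the rest is direct linear algebra.
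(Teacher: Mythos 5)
Your proposal is correct. For the first assertion you arrive at the same description of $\Lambda(\sigma^m,v)$ as the paper (namely, the lattice points on the horizontal line $\frac{b-d}{2}\beta + \R\alpha$), and your two coordinate equations $d'=(b-d)/2$ and $2c+d\n=\k m$ are exactly the paper's conditions once one notes $(v,\alpha)=\frac{2c+d\n}{2}(\alpha,\alpha)$; the paper derives the necessary condition by computing $\sigma^{2m}(x)$ in two ways and applying the inner product with $\alpha$, while you derive it by iterating the affine form $\sigma(y)=\sigma_0(y)+a\alpha+b\beta$ to get a closed form for $\sigma^m$ and matching coordinates. These are cosmetically different but mathematically equivalent. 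Your parity check that $b\equiv d \pmod 2$ via cases (a)--(c) matches the paper's own argument.

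Where you genuinely diverge is in the second assertion. The paper disposes of it in one line: since $(t\sigma)^2=\sigma^2$, the entire analysis of the first assertion applies verbatim with $\sigma$ replaced by $t\sigma$ (and $\A_\sigma$ by the analogously-defined $\A_{t\sigma}$), producing the same count of $0$ or $\k$ under the same conditions. You instead build an explicit translation $\phi(x)=x+\frac{f}{2}\beta$ intertwining the two sets, which requires verifying $\sigma_0(t_0)=-t_0$, checking $f$ is even (another lattice case analysis), and confirming $\phi$ carries $\A_\sigma$ to $\A_{t\sigma}$ and $\Lambda(\sigma^m,v)$ to $\Lambda((t\sigma)^m,v)$. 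All your steps check out — in particular $(t\sigma)^m=t\sigma^m$ for odd $m$, since $\sigma^{2r}$ is a translation commuting with $t$ — so the bijection is genuine. The tradeoff is that your route is more concrete (a literal correspondence between solutions) at the cost of extra bookkeeping, whereas the paper's observation is shorter but implicit, leaving the reader to see that the first proof is uniform in the choice of glide reflection.
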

\begin{proof}
Suppose there is an element $x \in \A_\sigma \cap \Lambda(\sigma^m,v)$.
Recall that $L_0 = \frac{b}{2}\beta + \R \alpha$ and $\sigma$ (and $\sigma^m$ for odd $m$) maps $s \beta + \R \alpha$ to $(b-s)\beta + \R \alpha$.
Comparing the coefficients of $\beta$ in $\sigma^m(x) = x+v$, we get $d>0$ because $x \in \A_{\sigma}$. On the other hand, from
\begin{align*} x+ m\k \alpha &= \sigma^{2m}(x) = \sigma^m(x+v) = \sigma^m(x)+\sigma_0(v)\\
&=x+v+\sigma_0(v)= x+ \frac{2(v,\alpha)}{(\alpha,\alpha)}\alpha
\end{align*}
we conclude that $(v,\alpha)= \frac{\k m}{2}(\alpha,\alpha)$.

Conversely, suppose $d>0$ and $(v,\alpha)= \frac{\k m}{2}(\alpha,\alpha)$.
Observe that
$$ 2c + d \n = \frac{2(v,\alpha)}{(\alpha,\alpha)}= \k m = m(2a+b \n ).$$
Observe that $b\equiv d \mod 2$ if $\n=1$. If $\n=2$, then $c$ and $a$ are both even. Therefore
$$ d \n \equiv 2c + d \n = m(2a+b \n ) \equiv m b \n \equiv b \n  \mod 2 \n.$$
We conclude that $d \equiv b \mod 2$  in both cases and $L=\frac{(b-d)}{2} \beta+\R \alpha$ is rational.
To finish the proof of the first assertion, we claim that $\Lambda(\sigma^m,v)=L \cap \Lambda$. If so, then
$$\A_\sigma \cap  \Lambda(\sigma^m,v) = \A_\sigma \cap L \cap \Lambda = \{ n \alpha + \frac{b-d}{2}\beta ~:~ n=0,  \dots, \k-1\}$$
has cardinality $\k$. To prove the claim, note first that for $x \in  \Lambda(\sigma^m,v)$, $\sigma^m(x)=x+v = x + c \alpha +d \beta$. Thus $x \in \frac{b-d}{2}\beta+\R \alpha = L$. Conversely,
for $x \in L$, $\sigma^m(x)-x-v \in \R \alpha$.
{Note that for all $ y \in \A$,
$$(\sigma(y)-y,\alpha)=\frac{\k}{2}(\alpha,\alpha)\quad \mbox{and}\quad (\sigma^m(y)-y,\alpha)=\frac{m\k}{2}(\alpha,\alpha) = (v, \alpha).$$
Thus, for $y=x$, $\sigma^m(x)-x = v$. Therefore the claim holds. }

Since $(t\sigma)^2(x)=\sigma^2(x)=x+ \k \alpha$, the same result also holds if we replace $\sigma$ by $t\sigma$.
\end{proof}

%&=\sum_{\la \in \wt'(\pi)}  \sum_{\ga \in [\Ga]} \# \{ x \in C_\Ga(\ga) \bs %\Lambda ~:~ \ga(x)  =  x+ n \lambda\}.

\subsection{ $N_n$ and $\tilde{N}_n$ revisited} For $\gamma \in \Gamma$, denote by $C_\Ga(\ga)$ the centralizer of $\ga$ in $\Ga$ and let $[\Ga]$ be a set of representatives of the conjugacy classes of $\Ga$. Then $\Gamma = \cup_{\gamma \in [\Gamma]} [\ga]$, where $[\ga] = \{g^{-1} \gamma g : g \in C_\Ga(\ga) \backslash \Ga\}$ is the conjugacy class of $\ga$. 

We begin by expressing $N_n$ in terms of the conjugacy classes of $\Ga$. 
Recall that
\begin{align*}
N_n
&=\sum_{\la \in \wt'(\pi)}  \sum_{\ga \in \Ga}  \# \{ x \in \Ga \bs \Lambda ~:~  \ga(x)  =  x+ n \lambda \} \\
&=\# \{ (\ga,x) \in \Ga \times (\Ga \bs \Lambda)~: ~\ga(x)- x \in n \cdot \wt'(\pi)\} \\
&=\sum_{\ga \in [\Ga]}\# \{(g^{-1} \ga g,x) \in [\ga] \times  (\Ga \bs \Lambda)~: ~g^{-1} \ga g(x) - x  \in n \cdot \wt'(\pi) \}.
\end{align*} 
Write $g_0$ for the linear part of %the affine transformation 
$g \in  \Ga$, which is an element of the Weyl group. Observe that  
$$ g^{-1} \ga g(x) = x + n \lambda \,\Leftrightarrow \, 
 \ga g(x) = g(x + n \lambda) = g(x) + n g_0(\lambda) = g(x)+n \lambda'.
$$
Here $\lambda' = g_0(\lambda)$ lies in $\wt'(\pi)$ if and only if $\lambda$ does.
Since $\Gamma$ acts freely on $\Lambda$,  %the set
for each $\ga \in [\Ga]$, the map $(g^{-1} \gamma g, x) \mapsto (\gamma, gx)$ from $[\ga] \times (\Ga \bs \Lambda)$ to $\{\ga\} \times (C_\Ga(\ga) \bs \Lambda)$ is a bijection. 
Therefore, we can re-express $N_n$ as 

\begin{align*}
N_n &=\sum_{\ga \in [\Ga]}\# \{ (\ga,x)~:~  x \in C_\Ga(\ga) \bs \Lambda , \ga(x)- x \in n \cdot \wt'(\pi)\}\\
&=\sum_{\la \in \wt'(\pi)}  \sum_{\ga \in [\Ga]} \# \{ x \in C_\Ga(\ga) \bs \Lambda ~:~ \ga(x)  =  x+ n \lambda\}.
\end{align*}

Next, we proceed to express $\tilde{N}_n$ in a similar way. Recall that  $\tilde{N}_n$ is the number of
 closed geodesic walks in $C_\pi$ of normalized length $n$, and a closed walk in $C_\pi$ is a closed geodesic walk if and only if its twice repetition also lies in $C_\pi$. Therefore
\begin{align*}
\tilde{N}_n
&=\sum_{\la \in \wt'(\pi)}  \sum_{\ga \in [\Ga]} \# \{ x \in C_\Ga(\ga) \bs \Lambda ~:~ \ga(x)  =  x+ n \lambda , \ga^2(x)=x+ 2 n\lambda\}.
\end{align*}

Note that for $\ga$ a translation or $\ga$ a glide reflection with reflection axis parallel to $\lambda$, the condition $\ga(x)=x+n\la$ always implies $\ga^2(x)=x+2 n \la$. On the other hand, for $\ga$ a glide reflection with reflection axis not parallel to $\lambda$, the relation $\ga(x)=x+n\la$ implies $\ga^2(x) \neq x+2 n \la$.

Combined with the fact that $C_\Ga(\sigma^m)= \langle \sigma \rangle$ and  $C_\Ga( (t\sigma)^m)= \langle t\sigma \rangle$ for all odd integers $m$, we conclude
\begin{align*}
N_n-\tilde{N_n}&=\sum_{\la \in \wt'(\pi), \la \neq \pm \alpha}~\sum_{m ~\mathrm{odd}} ~\sum_{\ga = \sigma, ~t \sigma} \# \{ x \in  \langle \ga \rangle \bs \Lambda ~:~ \ga^m(x)  =  x+ n \lambda\}\\
&= \sum_{\la \in \wt'(\pi),\la \neq \pm \alpha}~ \sum_{m~\mathrm{odd}}  ~\sum_{\ga = \sigma, ~t \sigma}  |\A_\gamma \cap \Lambda(\gamma^m, n \lambda)| \\
&= \sum_{\la \in \wt'(\pi),\la \neq \pm \alpha} ~\sum_{m~\mathrm{odd}} 2 |\A_\sigma \cap \Lambda(\sigma^m, n \lambda)|
\end{align*}
 by the second assertion of Proposition \ref{prop7}.

It follows from the first assertion of Proposition \ref{prop7} that, for $v= n \lambda$ with $|\A_\sigma \cap \Lambda(\sigma^m, n \lambda)| \ne 0$, we have $n (\la, \alpha) = (v,\alpha)=\frac{m \k}{2}(\alpha,\alpha)$ which implies $(\lambda, \alpha) \neq 0$.
On the other hand, for $\lambda \neq \pm \alpha$ and $(\lambda, \alpha) \neq 0$, a case-by-case analysis shows that $\frac{(\alpha,\alpha)}{2(\lambda,\alpha)} = \pm 1/\n$.
Therefore, when $|\A_\sigma \cap \Lambda(\sigma^m, n \lambda)| \ne 0$, we have $n= m \k/\n$  and  $m$ is positive and odd.

Recall from Proposition \ref{prop7} that $ |A_\sigma \cap \Lambda(\sigma^m, n \lambda)|= \k$ if it is nonzero, and this happens when $\lambda = c \alpha +d \beta$ satisfies the additional condition $d>0$.
Let $\wt_\Ga^+(\pi)$ consist of $\la \in \wt'(\pi)$ satisfying
$(\la,\alpha)\neq 0$  and $\la = c \alpha + d \beta $ with $d>0$. Therefore 
$$ |A_\sigma \cap \Lambda(\sigma^m, n \lambda)|= \begin{cases} \k  &\mbox{  if } \lambda \in \wt_\Ga^+(\pi) \mbox{~and~} n= m\k/\n, \\
0 & \mbox{ otherwise.}
\end{cases}
$$ 
Then we have
\begin{eqnarray*}
\sum_{n=1}^{\infty}(N_n-\tilde{N_n})\frac{u^n}{n}&=&\sum_{\la \in \wt_\Ga^+(\pi)} ~\sum_{m>0~\mathrm{odd}} \frac{2\k}{(m\k/\n)}   u^{ m\k/\n}\\
&=&\sum_{\la \in \wt_\Ga^+(\pi)} ~\sum_{r=0}^\infty \frac{2\n}{(2r+1)}   u^{(2r+1)\k/\n}\\
&=& |\wt_\Ga^+(\pi)| \n \log\frac{1+u^{\k/\n }}{1-u^{\k/\n}}.
\end{eqnarray*}

When $G$ is of type $\tilde{A}_2$, $\pi = \pi_1$ or $\pi_2$,
$n_\Ga=1$, and
$|\wt_\Ga^+(\pi)|= 1$.
When $G$ is of type $\tilde{C}_2$, $\pi = \pi_\spin$ or $\pi_\st$.
Then $|\wt_\Ga^+(\pi)|=0$ if $\Gamma$ is of type $\pi$, and $|\wt_\Ga^+(\pi)|=2$ otherwise.

For convenience, let
$$ \delta(\pi, \Ga) =
\begin{cases}
1 & \mbox{if $G$ is of type $\tilde{A_2}$,} \\
2 &\mbox{if $\Ga$ is not of type $\pi$ and $G$ is of type $\tilde{C_2}$,}\\
0 & \mbox{if $\Ga$ is of type $\pi$ and $G$ is of type $\tilde{C_2}$}.
\end{cases}
$$

The above discussion proves the following relation between the Langlands $L$-functions and the zeta function of geodesic walks of the Klein bottle $\A_\Ga$.
\begin{theorem} \label{maintheorem1}
Suppose $\A_\Ga$ is a Klein bottle, $\pi \in \{ \pi_1, \pi_2 \}$ if $G$ is of type $\tilde{A}_2$, and $\pi \in \{ \pi_\spin, \pi_\st \}$ if $G$ is of type $\tilde{C}_2$. Then
$$(1-u)^{\epsilon(\pi) N} L(\Ga \bs W_\ext,\pi,u^{})= Z(\A_\Ga,\pi,u)\left(\frac{1+u^{\k/n_{\Ga}}}{1-u^{\k/n_{\Ga}}}\right)^{n_{\Ga}
\delta(\pi, \Ga) }.$$ Here $\epsilon(\pi)$ is the multiplicity of the trivial weight of $\pi$ and $N$ is the cardinality of $\Ga \bs W_{\ext}/W$.
\end{theorem}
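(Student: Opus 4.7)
The plan is to take logarithms on both sides and reduce the identity to a computation of $N_n - \tilde{N}_n$. Applying Theorem \ref{infinitesum1} and Theorem \ref{infinitesum2}, the claimed equality is equivalent to
\begin{align*}
\sum_{n=1}^\infty \frac{N_n - \tilde{N}_n}{n} u^n = n_\Ga\, \delta(\pi, \Ga) \log\frac{1+u^{\k/n_\Ga}}{1-u^{\k/n_\Ga}}.
\end{align*}
So the real content is to count how many closed walks in $C_\pi$ fail to be geodesic and identify the count with the right-hand side.

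First I would reorganize $N_n$ and $\tilde{N}_n$ into sums over conjugacy classes of $\Ga$. For each $\ga \in [\Ga]$, the bijection $(g^{-1}\ga g, x) \mapsto (\ga, gx)$ converts a sum over $[\ga] \times (\Ga \bs \Lambda)$ into a sum over $\{\ga\} \times (C_\Ga(\ga) \bs \Lambda)$, yielding
\begin{align*}
N_n = \sum_{\la \in \wt'(\pi)} \sum_{\ga \in [\Ga]} \#\{ x \in C_\Ga(\ga) \bs \Lambda : \ga(x) = x+n\la \},
\end{align*}
with $\tilde{N}_n$ carrying the extra constraint $\ga^2(x) = x+2n\la$. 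The crucial observation is that this extra constraint is automatic whenever $\ga$ is a translation or a glide reflection whose reflection axis is parallel to $\la$; only glide reflections with axis transverse to $\la$ contribute to $N_n - \tilde{N}_n$.

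Next I would apply Proposition \ref{conjugacyclass}, which identifies the relevant conjugacy-class representatives as $\sigma^m$ and $(t\sigma)^m$ for odd $m$, with respective centralizers $\langle \sigma \rangle$ and $\langle t\sigma \rangle$. Proposition \ref{prop7} then supplies $|\A_\sigma \cap \Lambda(\sigma^m, n\la)| = |\A_{t\sigma} \cap \Lambda((t\sigma)^m, n\la)| = \k$ precisely when $(n\la, \alpha) = \tfrac{m\k}{2}(\alpha,\alpha)$ and the $\beta$-component of $\la$ is positive, and equals zero otherwise. A case-by-case inspection of the weight diagrams of $\pi_1, \pi_2, \pi_\spin, \pi_\st$ shows that for $\la \neq \pm \alpha$ with $(\la, \alpha) \neq 0$ one has $(\alpha,\alpha)/(2(\la,\alpha)) = \pm 1/n_\Ga$, forcing $n = m\k/n_\Ga$.

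Summing over odd $m>0$ and the contributing weights reduces to a geometric-series calculation
\begin{align*}
\sum_{n=1}^\infty \frac{N_n - \tilde{N}_n}{n} u^n &= 2 \sum_{\la \in \wt_\Ga^+(\pi)} \sum_{r=0}^\infty \frac{n_\Ga}{2r+1}\, u^{(2r+1)\k/n_\Ga} \\
&= |\wt_\Ga^+(\pi)|\, n_\Ga \log\frac{1+u^{\k/n_\Ga}}{1-u^{\k/n_\Ga}}.
\end{align*}
The main obstacle I anticipate is the final identification $|\wt_\Ga^+(\pi)| = \delta(\pi,\Ga)$, which requires inspecting each of the four representations: in type $\tilde{A}_2$, exactly one weight of $\pi$ qualifies, giving $\delta = 1$; in type $\tilde{C}_2$, precisely two weights qualify unless $\Ga$ is itself of type $\pi$, in which case every nontrivial weight of $\pi$ is either parallel to $\pm\alpha$ or orthogonal to $\alpha$, so $\wt_\Ga^+(\pi)$ is empty and $\delta = 0$. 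Combining this with the previous display completes the argument.
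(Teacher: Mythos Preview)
Your proposal is correct and follows essentially the same approach as the paper: the paper's proof is precisely the content of \S4.2, which reorganizes $N_n$ and $\tilde N_n$ over conjugacy classes via the same bijection you describe, invokes Propositions~\ref{conjugacyclass} and~\ref{prop7} to isolate the glide-reflection contributions, performs the identical geometric-series computation, and then identifies $|\wt_\Ga^+(\pi)|$ with $\delta(\pi,\Ga)$ case by case before appealing to Theorems~\ref{infinitesum1} and~\ref{infinitesum2}.
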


\subsection{Comparing zeta functions of geodesic walks of Klein bottles and those of their two-fold coverings}
As discussed in \S 4.1, the translations  in $\Ga$ form a subgroup $\Ga_0$ of index $2$. Let $L_0$ and $L_1$ be the glide reflection axis of $\sigma$ and $t\sigma$, respectively. Observe that if $t$ is translation by a vector $v \in \Lambda_r$, then $L_1$ is $L_0$ shifted by $\frac{1}{2}v$. These are the only two distinct glide reflection axes modulo the actions of $\Ga_0$ on $\A$. The finite quotient $\A_{\Ga_0}$ is a two-fold unramified cover of $\A_\Ga$. Given a primitive closed geodesic $\P$ (not necessary rational) in $\A_\Ga$, then there is either one primitive closed geodesic with twice the length of $\P$ or two disjoint primitive closed geodesics of the same length in $\A_{\Ga_0}$ which project to $\P$. We say that $\P$ is inert or split in $\A_{\Ga_0}$ accordingly.
Observe that $\P$ is inert in $\A_{\Ga_0}$ if and only if
any lifting of $\P$ in $\A$ is contained in $L_0$ or $L_1$ modulo $\Ga_0$ and the end point differs from the starting point by a glide reflection in $\Ga$. Furthermore all inert primitive closed geodesics in $\A_{\Ga}$  are  rational
if and only if $L_0$ and $L_1$ are rational.

Express the zeta function of $\pi$-geodesic walks on $\A_{\Ga}$ as
$$ Z(\A_\Ga,\pi, u ) = \prod_{\P~ \mathrm{split}} (1-u^{l(\P)})^{-1}~\prod_{\P~ \mathrm{inert}} (1-u^{l(\P)})^{-1}, $$
where $\P$ runs over rational primitive closed $\pi$-geodesics in $\A_\Ga$.
Note that the second product is nontrivial only when the direction of $L_0$ is parallel to a weight in $\pi$, in other words, either $\alpha$ or $-\alpha \in \wt'(\pi)$. For an inert $\P$, its length $l(\P)$ is equal to $\k/2$, half of the length of the primitive closed geodesic in $\A_{\Ga_0}$ projected to $\P$.  In particular, $\k$ is always even if an inert rational $\P$ exists.

We  may express the zeta function of $\pi$-geodesic walks on $\A_{\Ga_0}$ as a product over the rational primitive closed $\pi$-geodesics $\P$ in $\A_{\Ga}$:
\begin{eqnarray*}
Z(\A_{\Ga_0}, \pi, u )&=& \left(\prod_{\P~\mathrm{split}} (1-u^{l(\P)})^{-1} \right)^{2} \prod_{\P~ \mathrm{inert}} (1-u^{2l(\P)})^{-1}\\
&=& Z(\A_\Ga,\pi, u)^{2} \prod_{\P~\mathrm{inert}} \left(\frac{1-u^{\k/2}}{1+u^{\k/2}}\right).
\end{eqnarray*}
Note that the cardinality of $\{\pm \alpha\} \cap \wt'(\pi)$ is just $2-\delta(\pi,\Ga)$. Let $m_\Ga$ denote the number of $L_1$ and $L_0$ which are rational. Thus $m_\Ga = 2$ or $0$ because, by Proposition 4.4, $L_1$ and $L_0$ have the same rationality. Then we can rewrite the above result as
\begin{equation}\label{eq1}
Z(\A_{\Ga}, \pi, u )= Z(\A_{\Ga_0},\pi, u)^{1/2}  \left(\frac{1+u^{\k/2}}{1-u^{\k/2}}\right)^{m_\Ga(2- \delta(\pi,\Ga))/2}.
\end{equation}
Here the square root is chosen so that $Z(\A_\Ga,\pi, 0) = 1.$

Next, for a non-trivial weight $\lambda \in \wt(\pi)$, a $\lambda$-geodesic is called semi-rational if it is not rational but it becomes rational after translation by some $v \in \frac{1}{2}\Lambda$.
Note that $L_1$ and $L_0$ are either rational or semi-rational.
Consider the zeta function of $\lambda$-geodesic walks defined by
$$ Z(\A_{\Ga}, \lambda, u )  = \prod_{\P}(1- u^{l(\P)})^{-1},$$
where $\P$ runs through all primitive closed rational $\lambda$-geodesics in $\A_\Ga$.
Similarly, we define the zeta function
$Z_{\semi}(\A_{\Ga}, \lambda, u )$ by taking product over semi-rational $\lambda$-geodesics in the above definition.

It is clear that on the torus $A_{\Ga_0}$, translation by a $v \in \frac{1}{2}\Lambda \smallsetminus \Lambda$ induces a bijection between rational $\lambda$-geodesics and semi-rational $\lambda$-geodesics.
Thus,
$$ Z(\A_{\Ga_0}, \lambda, u ) =  Z_{\semi}(\A_{\Ga_0}, \lambda, u ).$$

For the Klein bottle $A_\Ga$, when $\lambda \neq \pm \alpha$, there are no inert geodesics in the direction of  $\lambda$. In this case we have
$$ Z(\A_{\Ga}, \lambda, u ) =  Z(\A_{\Ga_0}, \lambda, u )^{1/2} =  Z_{\semi}(\A_{\Ga_0}, \lambda, u )^{1/2}= Z_{\semi}(\A_{\Ga}, \lambda, u ).$$
When $\lambda = \pm \alpha$, the same argument as (\ref{eq1}) implies that
$$
Z(\A_{\Ga}, \lambda, u )= Z(\A_{\Ga_0},\lambda, u)^{1/2}  \left(\frac{1+u^{\k/2}}{1-u^{\k/2}}\right)^{m_\Ga/2}.
$$
Note that the number of $L_1$ and $L_0$ being semi-rational is equal to $2-m_\Ga$. Thus
\begin{eqnarray*}
Z_{\semi}(\A_{\Ga}, \lambda, u )&=& Z_{\semi}(\A_{\Ga_0},\lambda, u)^{1/2}  \left(\frac{1+u^{\k/2}}{1-u^{\k/2}}\right)^{(2- m_\Ga)/2}.
\end{eqnarray*}

We summarize the above discussion in the following theorem.
\begin{theorem} \label{maintheorem2}
Under the above notation, we have
$$Z_{\semi}(\A_{\Ga_0},\pi,u)= Z(\A_{\Ga_0}, \pi, u) $$
and
$$Z_{\semi}(\A_\Ga, \pi,  u) = Z(\A_\Ga,\pi, u) \left(\frac{1+u^{\k/2}}{1-u^{\k/2}}\right)^{(2-\delta(\pi,\Ga)) (1-m_\Ga)}.$$
\end{theorem}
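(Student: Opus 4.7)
The plan is to factor each of $Z(\A_\Ga,\pi,u)$ and $Z_{\semi}(\A_\Ga,\pi,u)$ as a product of the single-weight zeta functions $Z(\A_\Ga,\lambda,u)$ and $Z_{\semi}(\A_\Ga,\lambda,u)$ over $\lambda \in \wt'(\pi)$, and then to compare them weight by weight using the identities derived in the preceding paragraphs.

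For the first assertion, observe that $\A_{\Ga_0}$ is a simplicial torus and $\Ga_0$ acts by translations, so no closed $\lambda$-geodesic can be inert. For any $\lambda \in \wt'(\pi)$ and any $v \in \tfrac{1}{2}\Lambda \smallsetminus \Lambda$, the map $L \mapsto L + v$ is a $\Ga_0$-equivariant bijection between rational and semi-rational $\lambda$-lines in $\A$ that preserves the primitive length of the resulting closed geodesics. This gives $Z(\A_{\Ga_0},\lambda,u) = Z_{\semi}(\A_{\Ga_0},\lambda,u)$ for each $\lambda$; taking the product over $\wt'(\pi)$ yields the first identity.

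For the second assertion, I would partition $\wt'(\pi)$ according to whether $\lambda$ is parallel to $\alpha$; the weights $\pm \alpha$ present in $\wt'(\pi)$ number $2 - \delta(\pi,\Ga)$. For $\lambda$ not parallel to $\alpha$, any inert closed $\lambda$-geodesic would have to lift to a line contained in $L_0$ or $L_1$ modulo $\Ga_0$, both of which point in the direction $\alpha$; no such line exists, so $Z(\A_\Ga,\lambda,u) = Z_{\semi}(\A_\Ga,\lambda,u)$ and these weights contribute trivially to the ratio. For $\lambda = \pm\alpha$, the two-fold cover computation used in deriving \eqref{eq1} applies verbatim to both the rational and the semi-rational zeta functions, and by Proposition \ref{prop3} exactly $m_\Ga$ of the axes $L_0, L_1$ are rational and $2 - m_\Ga$ are semi-rational. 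Combining these gives
\[
\frac{Z_{\semi}(\A_\Ga,\lambda,u)}{Z(\A_\Ga,\lambda,u)} = \frac{Z_{\semi}(\A_{\Ga_0},\lambda,u)^{1/2}}{Z(\A_{\Ga_0},\lambda,u)^{1/2}}\left(\frac{1+u^{\k/2}}{1-u^{\k/2}}\right)^{1-m_\Ga},
\]
and the torus quotient on the right equals $1$ by the first assertion. Multiplying the resulting per-weight ratio over the $2 - \delta(\pi,\Ga)$ weights in $\{\pm\alpha\} \cap \wt'(\pi)$, together with the trivial ratio from all other weights, produces the correction factor in the theorem.

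The only subtle bookkeeping is verifying that $L_0$ and $L_1$ always share the same rationality, so that $m_\Ga \in \{0,2\}$ is well-defined; this is exactly Proposition \ref{prop3}. With that observation in hand, the proof is a clean assembly of the per-weight identities derived just above the theorem statement, and no new computation is required.
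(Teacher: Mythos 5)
Your argument is correct and follows the same route as the paper's: factor into single-weight zeta functions, identify the torus zeta functions $Z$ and $Z_{\semi}$ via the $\tfrac{1}{2}\Lambda$-translation bijection, then on the Klein bottle compare rational and semi-rational per-weight zeta functions through the two-fold cover $\A_{\Ga_0}$, noting that only $\lambda = \pm\alpha$ contributes inert geodesics and that $|\{\pm\alpha\}\cap\wt'(\pi)| = 2-\delta(\pi,\Ga)$ while $m_\Ga \in \{0,2\}$ by Proposition \ref{prop3}. This is precisely the decomposition the authors carry out in the paragraphs preceding the theorem, and your per-weight ratio formula agrees with their exponents $(2-m_\Ga)/2 - m_\Ga/2 = 1 - m_\Ga$.
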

Note that $Z(\A_\Gamma,\pi,u)$ is always a rational function in $u$ by definition and the above theorem implies that $Z_{\semi}(\A_\Gamma,\pi,u)$ is a rational function in $u^{1/2}$.

\section{Zeta functions of geodesic galleries}

A semi-rational geodesic is the middle line of the strip bounded by two closest rational geodesics parallel to it.
We shall also consider a zeta function counting these strips.

\subsection{$\pi$-geodesic galleries}
Fix a minuscule or quasi-minuscule representation $\pi$ of $\hat{G}(\C)$ as before.
Let $\lambda$ be a nontrivial weight of $\pi$ and $v$ be an element of $\Lambda$. A $\pi$-chamber $C_{\lambda,v}$ is the union of chambers in $\A$ containing {$(v,\lambda+v)$}. The directed edge from $v$ to $\lambda+v$ is called the central edge of $C_{\lambda,v}$. As we shall see, when $G=$PGSp$_4$, the same union of chambers forms more than one $\pi$-chamber, with different central edges.

A sequence $(\cdots, C_{-1},C_0,\cdots)$ of $\pi$-chambers is called a $\pi$-geodesic gallery if the following two conditions hold:
\begin{enumerate}
\item For all $i<j$, the convex hull of $C_i$ and $C_j$ is the union $C_i \cup C_{i+1} \cup \cdots \cup C_j$;
\item For $i$ not equal to the highest index, the terminal vertex of the central directed edge of $C_i$ is equal to the initial vertex of the central directed edge of $C_{i+1}$.
\end{enumerate}

We explain these $\pi$-geodesic galleries in more detail.

(I) For $G=$PGL$_3$, let $\pi = \pi_1$ with $\wt(\pi_1)$ and a $\pi_1$-chamber as shown below:
\begin{center}
\begin{tikzpicture}[scale =1.1]
\clip (-3.7,-3.7) rectangle ++ (3.6,3.6);
\foreach \y in {-10,-9,...,10}
{
\draw plot (\x+\y,1.732*\x);
\draw plot (-\x+\y,1.732*\x);
\draw plot (-\x,0.866*\y);
}
\draw[red, very thick,->] (-2,-1.732)--(-1,-1.732);
\draw[red, very thick,->] (-2,-1.732)--(-2.5,-2.598);
\draw[red, very thick,->] (-2,-1.732)--(-2.5,-.866);
\node at (-1,-1.4) {$\alpha_1$};
\node at (-2.5,-.5) {$\alpha_2$};
\node at (-2.5,-3) {$\alpha_3$};
\end{tikzpicture}
\hspace{.3cm}
\begin{tikzpicture}[scale =1.1]
\clip (-3.7,-3.7) rectangle ++ (3.6,3.6);
\filldraw [green] (-2,-1.732)--(-1.5,-2.598)--(-1,-1.732)--(-1.5,-.866)--(-2,-1.732);
\foreach \y in {-10,-9,...,10}
{
\draw plot (\x+\y,1.732*\x);
\draw plot (-\x+\y,1.732*\x);
\draw plot (-\x,0.866*\y);
}
\draw[red, very thick,->] (-2,-1.732)--(-1,-1.732);
 \node at (-1.5,-1.5) {$C_{\alpha_1,0}$};
\end{tikzpicture}\\
the weights of $\pi_1$ \hspace{2cm} a $\pi_1$-chamber $C_{\alpha_1,0}$
\end{center}
Then there are two $\pi_1$-geodesic galleries in $\A$ containing $C_{\alpha_1,0}$:

\begin{center}
\begin{tikzpicture}[scale =1.6]
\clip (-3.7,-3.2) rectangle ++ (3.6,3.6);
\filldraw [green] (-11,-17.32)--(-10,-17.32)--(10,17.32)--(9,17.32)--(-11,-17.32);
\foreach \y in {-10,-9,...,10}
{
\draw plot (\x+\y,1.732*\x);
\draw plot (-\x+\y,1.732*\x);
\draw plot (-\x,0.866*\y);
\draw[red, very thick,->] (-2+.5*\y,-1.732+.866*\y)--(-1.05+.5*\y,-1.732+.866*\y);
\draw[red, very thick,->] (-1+.5*\y,-1.732+.866*\y)--(-1.475+.5*\y,-.91+.866*\y);
}

 \node at (-1.5,-1.6) {$C_{\alpha_1,0}$};
 \node at (-.8,-1.3) {$C_{\alpha_2,\alpha_1}$};
 \node at (-2,-2.466) {$C_{\alpha_1,-\alpha_1-\alpha_2}$};
  \node at (-1.3,-2.166) {$C_{\alpha_2,-\alpha_2}$};
  \node at (-.8,-1.3) {$C_{\alpha_2,\alpha_1}$};
  \node at (-1,-.734) {$C_{\alpha_1,\alpha_1+\alpha_2}$};
    \node at (-.4,-.34) {$C_{\alpha_2,2\alpha_1+\alpha_2}$};
\end{tikzpicture}
\hspace{.5cm}
\begin{tikzpicture}[scale =1.6]
\clip (-3.7,-3.2) rectangle ++ (3.6,3.6);
\filldraw [green] (7,-17.32)--(8,-17.32)--(-12,17.32)--(-13,17.32)--(7,-17.32);
\foreach \y in {-10,-9,...,10}
{
\draw plot (\x+\y,1.732*\x);
\draw plot (-\x+\y,1.732*\x);
\draw plot (-\x,0.866*\y);
\draw[red, very thick,->] (-2-.5*\y,-1.732+.866*\y)--(-1.05-.5*\y,-1.732+.866*\y);
\draw[red, very thick,->] (-1-.5*\y,-1.732+.866*\y)-- (-1.475-.5*\y,-2.555+.866*\y);
}

 \node at (-1.5,-1.6) {$C_{\alpha_1,0}$};
 \node at (-1,-2.466) {$C_{\alpha_1,\alpha_1+\alpha_3}$};
 \node at (-2,-.734) {$C_{\alpha_1,-\alpha-\alpha_3}$};
 \node at (-1.3,-1.267) {$C_{\alpha_3,-\alpha_3}$};
 \node at (-1,-2.233) {$C_{\alpha_3,\alpha_1}$};
 \node at (-1.8,-.5) {$C_{\alpha_3,-\alpha-2\alpha_3}$};

\end{tikzpicture}
$$(\cdots,C_{\alpha_2,-\alpha_2}, C_{\alpha_1,0},C_{\alpha_2,\alpha_1},\cdots)\quad (\cdots,C_{\alpha_3,-\alpha_3}, C_{\alpha_1,0},C_{\alpha_3,\alpha_1},\cdots)$$

\end{center}
Recall that $\pi_2$ has weights $-\alpha_1, -\alpha_2$, and $-\alpha_3$.
Notice that the middle line of a $\pi_1$-geodesic gallery parallel to its sides is a semi-rational $\pi_2$-geodesic, and similarly the middle line of a $\pi_2$-geodesic gallery is a semi-rational $\pi_1$-geodesic.

(II) For $G=$PGSp$_4$, the weights of $\pi_\spin$ and $\pi_\st$ are as follows.
\begin{center}
\begin{tikzpicture}[scale =1]
\clip (-3.7,-3.7) rectangle ++ (3.6,3.6);
\foreach \y in {-10,-9,...,10}
{
\draw plot (\x,\y);
\draw plot (\y,\x);
\draw plot (\x+\y,\x);
\draw plot (-\x+\y,\x);
}
\draw[red, very thick,->] (-2,-2)--(-1,-2);
\draw[red, very thick,->] (-2,-2)--(-3,-2);
\draw[red, very thick,->] (-2,-2)--(-2,-1);
\draw[red, very thick,->] (-2,-2)--(-2,-3);
 \node at (-1.5,-1.8) {$\alpha$};

\end{tikzpicture}
\hspace{.3cm}
\begin{tikzpicture}[scale =1]
\clip (-3.7,-3.7) rectangle ++ (3.6,3.6);
\foreach \y in {-10,-9,...,10}
{
\draw plot (\x,\y);
\draw plot (\y,\x);
\draw plot (\x+\y,\x);
\draw plot (-\x+\y,\x);
}
\draw[blue, very thick,->] (-2,-2)--(-1,-1);
\draw[blue, very thick,->] (-2,-2)--(-3,-3);
\draw[blue, very thick,->] (-2,-2)--(-1,-3);
\draw[blue, very thick,->] (-2,-2)--(-3,-1);
\filldraw [blue] (-2,-2) circle (4pt);
\node at (-1.5,-1.1) {$\beta$};
\node at (-1.5,-2.9) {$\beta'$};
\end{tikzpicture}\\
the weights of $\pi_\spin$ \hspace{1cm} the weights of $\pi_\st$
\end{center}

An example of a $\pi_\spin$-chamber is $C_{\alpha,0}$ shown below. There are two $\pi_\spin$-geodesic galleries containing $C_{\alpha, 0}$, determined by the $\pi_\spin$-chamber following it. One is as shown below with the central edges of the chambers adjacent to $C_{\alpha,0}$ pointing upward, the other one has the immediate adjacent central edges pointing downward. Consequently the middle lines of the two $\pi_\spin$-geodesic galleries containing $C_{\alpha, 0}$ are perpendicular to each other.
\begin{center}
\begin{tikzpicture}[scale =1]
\clip (-3.7,-3.7) rectangle ++ (3.6,3.6);
\filldraw [green] (-2,-2)--(-1.5,-1.5)--(-1,-2)--(-1.5,-2.5)--(-2,-2);
\foreach \y in {-10,-9,...,10}
{
\draw plot (\x,\y);
\draw plot (\y,\x);
\draw plot (\x+\y,\x);
\draw plot (-\x+\y,\x);
}
\draw[red, very thick,->] (-2,-2)--(-1,-2);
 \node at (-1.5,-1.8) {$\alpha$};
\end{tikzpicture}
\hspace{.8cm}
\begin{tikzpicture}[scale =1]
\clip (-3.7,-3.7) rectangle ++ (3.6,3.6);
\filldraw [green] (-12,-12)--(10,10)--(11,10)--(-11,-12)--(-12,-2);
\foreach \y in {-10,-9,...,10}
{
\draw plot (\x,\y);
\draw plot (\y,\x);
\draw plot (\x+\y,\x);
\draw plot (-\x+\y,\x);
\draw[red, very thick,->] (-2+\y,-2+\y)--(-1.05+\y,-2+\y);
\draw[red, very thick,->] (-1+\y,-2+\y)--(-1+\y,-1.05+\y);
}
 \node at (-1.5,-1.8) {$\alpha$};
\end{tikzpicture}\\
$\qquad$ the $\pi_\spin$-chamber $C_{\alpha,0}$ \hspace{0.1cm} $\qquad $ a $\pi_\spin$-geodesic gallery $\qquad $
\end{center}
Note that both $C_{-\alpha, \alpha}$ and $C_{\alpha,0}$ are the union of the same chambers but their central directed edges are opposite.

Shown below are an example of a $\pi_\st$-chamber $C_{\beta, 0}$  and a $\pi_\st$-geodesic gallery containing it. Similar to  the previous case, there is another $\pi_\st$-geodesic gallery containing $C_{\beta, 0}$ so that the middle lines of the two galleries are perpendicular to each other. Different from the previous case, on the union of the chambers in $C_{\beta,0}$ there are four choices of the central directed edges giving rise to four distinct $\pi_\st$-chambers. Denote by $\beta'$ the weight of $\pi_\spin$ perpendicular to $\beta$ and pointing downward. Then one of the four $\pi_\st$-chambers $C'$ on the same square as $C_{\beta, 0}$ has the central directed edge a shift of $\beta'$. Observe that the two $\pi_\st$-geodesic galleries containing $C'$ have the middle lines coincide with those containing $C_{\beta, 0}$.

\begin{center}
\begin{tikzpicture}[scale =1]
\clip (-3.7,-3.7) rectangle ++ (3.6,3.6);
\filldraw [green] (-2,-2)--(-1,-2)--(-1,-1)--(-2,-1)--(-2,-2);
\foreach \y in {-10,-9,...,10}
{
\draw plot (\x,\y);
\draw plot (\y,\x);
\draw plot (\x+\y,\x);
\draw plot (-\x+\y,\x);
}
\draw[blue, very thick,->] (-2,-2)--(-1,-1);
\node at (-1.5,-.9) {$\beta$};
\end{tikzpicture}
\hspace{.8cm}
\begin{tikzpicture}[scale =1]
\clip (-3.7,-3.7) rectangle ++ (3.6,3.6);
\filldraw [green] (-10,-2)--(10,-2)--(10,-1)--(-10,-1)--(-10,-2);
\foreach \y in {-10,-9,...,10}
{
\draw plot (\x,\y);
\draw plot (\y,\x);
\draw plot (\x+\y,\x);
\draw plot (-\x+\y,\x);
\draw[blue, very thick,->] (-2+2*\y,-2)--(-1.05+2*\y,-1.05);
\draw[blue, very thick,->] (-1+2*\y,-1)--(2*\y-0.05,-1.95);
}
\node at (-1.5,-.9) {$\beta$};
\end{tikzpicture} \\
$\qquad$ the $\pi_\st$-chamber $C_{\beta,0}$ \hspace{0.1cm} $\qquad $ a $\pi_\st$-geodesic gallery $\qquad $
\end{center}

\noindent Note that the middle line of a $\pi_\st$-geodesic gallery is a semi-rational $\pi_\spin$-geodesic, and the middle line of a $\pi_\spin$-geodesic gallery is a semi-rational $\pi_\st$-geodesic.

We summarize the above discussion in
\begin{theorem}\label{middleline}
Let the set $\{\pi, \pi'\}$ be $\{\pi_1, \pi_2 \}$ for $G = {\rm PGL}_3$ and $\{\pi_\spin, \pi_\st \}$ for $G= {\rm PGSp}_4$. Then
the middle line of a $\pi$-geodesic gallery is a semi-rational $\pi'$-geodesic. Conversely, a semi-rational $\pi'$-geodesic is the middle line of a $\pi$-geodesic gallery. Furthermore, the map from the equivalence classes of $\pi$-geodesic galleries in $\A$ to the  semi-rational lines of type $\pi'$ in $\A$ sending the equivalence class of a $\pi$-geodesic gallery to its semi-rational middle line is a bijection except for $\pi = \pi_\st$, in which case it is a $2$-to-$1$ surjection.
\end{theorem}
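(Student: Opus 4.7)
The plan is to handle all three claims by a unified case-by-case inspection of the four pairs $(\pi,\pi')$, building directly on the pictures displayed in this section. The unifying geometric observation is that, in every case, condition (1) of a $\pi$-gallery forces the $C_i$ to fill a bi-infinite strip bounded by two parallel rational lines, while condition (2) forces the central edges to trace a connected zig-zag through the middle of that strip; moreover two consecutive central edges always sum to a vector in the direction of the middle line.

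For the forward direction I would read off this direction in each of the four cases. For $G=\PGL_3$ the sum of two consecutive weights of $\pi_1$ is minus a weight of $\pi_2$ (and symmetrically for $\pi_2$); for $G=\PGSp_4$ with $\pi=\pi_\spin$ the sum of two perpendicular weights of $\pi_\spin$ is a diagonal weight of $\pi_\st$; and for $\pi=\pi_\st$ the sum of two consecutive diagonal weights is a horizontal or vertical weight of $\pi_\spin$. Hence the middle line points in a direction from $\wt'(\pi')$, and because it passes through midpoints of rational edges rather than through hyperspecial vertices it is automatically semi-rational.

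For the reverse direction, given a semi-rational $\pi'$-geodesic $L$ with direction $\mu\in\wt'(\pi')$, the line $L$ lies exactly halfway between its two closest parallel rational lines. The closed strip bounded by these lines decomposes canonically into a bi-infinite sequence of chamber blocks, and on each block there is at least one central directed edge whose midpoint lies on $L$ and whose direction agrees with $\mu$; such a choice determines a $\pi$-chamber. Conditions (1) and (2) then hold automatically, because the strip is geodesically convex and the selected central edges chain end-to-end along $L$, giving a $\pi$-gallery with middle line $L$.

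It remains to count how much freedom is left in this local choice of central edge. For $\pi\in\{\pi_1,\pi_2,\pi_\spin\}$, the support of a single $\pi$-chamber is a rhombus whose unique diagonal of the appropriate type has midpoint on $L$ and direction compatible with $\mu$, so the choice is forced and the map is a bijection. For $\pi=\pi_\st$ the support is instead a square, and of the four candidate central edges (corresponding to the four weights $\pm\beta,\pm\beta'$) exactly two, namely those in the directions $\beta$ and $\beta'$ highlighted in the discussion of $(C_{\beta,0},C')$, have midpoints on $L$ and agree with $\mu$. The main obstacle, which I expect to be the delicate point of the argument, is to verify that these two local choices on each square propagate \emph{coherently} across the whole gallery, producing exactly two globally defined $\pi_\st$-galleries over $L$ (differing by a global swap $\beta\leftrightarrow\beta'$) and not a third; this will yield the claimed 2-to-1 surjection.
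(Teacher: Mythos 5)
Your plan is essentially the same as the paper's: the paper does not give a formal proof of this theorem but instead \emph{summarizes} the preceding picture--based discussion, which is exactly the case-by-case inspection you carry out. Your forward/reverse framework and the observation that the freedom in choosing the central edge is concentrated in the support of a single $\pi$-chamber are the right ideas, and the count of candidates per block (one for $\pi_1,\pi_2,\pi_\spin$; two for $\pi_\st$) is correct.

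Two comments. First, the ``delicate point'' you flag at the end is in fact not delicate, and you should not defer it: once a single $\pi_\st$-chamber is chosen on one square of the strip, condition (2) in the definition of a $\pi$-geodesic gallery (terminal vertex of the central edge of $C_i$ equals initial vertex of the central edge of $C_{i+1}$), together with the requirement that all central-edge midpoints lie on the fixed directed line $L$ and have positive pairing with $\mu$, determines the central edge of the adjacent square uniquely, and by induction the whole gallery. So the global count is exactly the local count on one square: the two global galleries are the one starting with a $\beta$-type diagonal and the one starting with a $\beta'$-type diagonal, and they do indeed alternate, hence ``differ by a global swap $\beta\leftrightarrow\beta'$'' as you guessed. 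No extra coherence lemma is needed. Second, a small slip in the forward direction for $\tilde A_2$: since the weights of $\pi_1$ sum to zero and $\wt(\pi_2)=-\wt(\pi_1)$, the sum of two consecutive central-edge directions $\alpha_i+\alpha_j=-\alpha_k$ already \emph{is} a weight of $\pi_2$, not ``minus a weight of $\pi_2$''; this is only a sign bookkeeping issue and does not affect the argument. With the propagation step filled in as above, your proposal matches the paper's reasoning.
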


\subsection{Zeta functions of geodesic galleries}
Define $\pi$-geodesic galleries as well as primitive and equivalent $\pi$-geodesic galleries on $\A_\Ga$ similar to what we did for walks.  A closed $\pi$-geodesic gallery $C$ in $\A_\Ga$ is called a geodesic gallery if the closed walks formed by the central directed edges of the $\pi$-chambers in $C$ as well as all geodesic galleries equivalent to it contain no backtracking.
The length $l(C)$ of a closed $\pi$-geodesic gallery $C$ in $\A_\Ga$ is defined to be the number of $\pi$-chambers contained in $C$. The middle line $c'$ of $C$ is a semi-rational closed $\pi'$-geodesic. Here $\{\pi, \pi'\}$ is as in
Theorem \ref{middleline}. It has the normalized length (with respect to the nonzero weight in $\pi'$) $l(c')$.
The ratio $l(C)/l(c')$ is a constant independent of the choice of $C$. Indeed, a case by case check shows that $l(C)/l(c')= 2/n_{\pi'}.$

The zeta function of $\pi$-geodesic galleries of $\A_\Ga$ is defined as
$$ Z_2(\A_\Ga, \pi ,u) = \prod_{C}(1- u^{l(C)})^{-1}, $$
where $C$ runs through the equivalence classes of all primitive closed $\pi$-geodesic galleries in $\A_\Ga$.

When $\A_\Ga$ is a Klein bottle, Theorem \ref{maintheorem1} gives the ratio of the Langlands L-function and the geometric zeta function as
\begin{eqnarray}\label{Loverzeta}
\frac{(1 - u)^{\epsilon(\pi)N}L(\Ga \bs W_\ext, \pi, u)}{Z(\A_\Ga, \pi, u)} =  \left(\frac{1+u^{\k/n_{\Ga}}}{1-u^{\k/n_{\Ga}}}\right)^{n_\Ga \delta(\pi,\Ga)}.
\end{eqnarray}
Setting $\delta(\pi, \Ga) = 0$ and interpreting the right hand side of (\ref{Loverzeta}) to be 1 when $\A_\Ga$ is a torus, we get that (\ref{Loverzeta}) also holds for the torus case by Theorem \ref{maintheorem0}. Next we compare {the zeta function of geodesic galleries with the zeta function of geodesic walks.}
Combining the above discussions with Theorems \ref{middleline} and \ref{maintheorem2}, we obtain the following result. Recall that $n_\pi = 1$ for $\pi = \pi_1, \pi_2, \pi_\st$, and $n_\pi = 2$ for $\pi = \pi_\spin$.
\begin{theorem}\label{zeta2andzeta}
Let $\{\pi, \pi'\}$ be $\{\pi_1, \pi_2 \}$ for $G = {\rm PGL}_3$ and $\{\pi_\spin, \pi_\st \}$ for $G= {\rm PGSp}_4$. Then
\begin{eqnarray*}
 Z_2(\A_\Ga, \pi ,u) &=& Z_{\semi}(\A_\Ga, \pi', u^{2/n_{\pi'}})^{n_{\pi'}}\\
&=& Z(\A_\Ga,\pi', u^{2/n_{\pi'}})^{n_{\pi'}} \left(\frac{1+u^{\k/n_{\pi'}}}{1-u^{\k/n_{\pi'}}}\right)^{\delta(\pi,\Ga) (1-m_\Ga)n_{\pi'}}.
\end{eqnarray*}
\end{theorem}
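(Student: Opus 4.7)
The plan is to prove the two equalities separately. The first, $Z_2(\A_\Ga, \pi, u) = Z_{\semi}(\A_\Ga, \pi', u^{2/n_{\pi'}})^{n_{\pi'}}$, is combinatorial, based on the middle-line correspondence of Theorem \ref{middleline} combined with the length ratio $l(C)/l(c') = 2/n_{\pi'}$ recorded just before that theorem. The second equality follows from Theorem \ref{maintheorem2} upon a change of variable, a power operation, and an identity relating $\delta(\pi, \Ga)$ and $\delta(\pi', \Ga)$.

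For the first equality, I would observe that the middle-line construction of Theorem \ref{middleline} is equivariant under the isometry group of $\A$, so in particular under $\Ga$. The bijection (or $2$-to-$1$ surjection when $\pi = \pi_\st$) between equivalence classes of $\pi$-geodesic galleries in $\A$ and semi-rational $\pi'$-geodesics in $\A$ thus descends to a correspondence between the equivalence classes of primitive closed $\pi$-geodesic galleries in $\A_\Ga$ and the primitive closed semi-rational $\pi'$-geodesics in $\A_\Ga$, preserving closedness, the no-backtracking condition, and primitivity. The length relation $l(C) = (2/n_{\pi'})\, l(c')$ then gives $u^{l(C)} = (u^{2/n_{\pi'}})^{l(c')}$. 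For $\pi \in \{\pi_1, \pi_2, \pi_\spin\}$ the map is a bijection with $n_{\pi'} = 1$, so each factor matches directly and $Z_2(\A_\Ga, \pi, u) = Z_{\semi}(\A_\Ga, \pi', u^{2/n_{\pi'}})$. For $\pi = \pi_\st$ one has $n_{\pi'} = 2$ and the two galleries over each middle line contribute identical factors $(1-u^{l(c')})^{-1}$, producing the extra exponent $n_{\pi'} = 2$ on the right-hand side.

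For the second equality, substituting $\pi \mapsto \pi'$ and $u \mapsto u^{2/n_{\pi'}}$ in Theorem \ref{maintheorem2} yields
\begin{equation*}
Z_{\semi}(\A_\Ga, \pi', u^{2/n_{\pi'}}) = Z(\A_\Ga, \pi', u^{2/n_{\pi'}}) \left(\frac{1+u^{\k/n_{\pi'}}}{1-u^{\k/n_{\pi'}}}\right)^{(2-\delta(\pi',\Ga))(1-m_\Ga)}.
\end{equation*}
Raising both sides to the $n_{\pi'}$-th power gives the stated identity provided that $\delta(\pi, \Ga) = 2 - \delta(\pi', \Ga)$. This complementarity follows by inspection from the definition of $\delta$: in type $\tilde A_2$ both values are $1$, and in type $\tilde C_2$ exactly one of the possibilities ``$\Ga$ is of type $\pi$'' and ``$\Ga$ is of type $\pi'$'' can hold, producing a complementary pair $\{0, 2\}$.

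The main obstacle will be the careful bookkeeping of primitivity and equivalence in the first step, particularly checking that the no-backtracking condition on the central edges of a closed $\pi$-gallery in $\A_\Ga$ corresponds exactly to the geodesic condition on its semi-rational middle line in $\A_\Ga$, and verifying in the $\pi = \pi_\st$ case that the two galleries above a common middle line really remain inequivalent and primitive after descending to $\A_\Ga$ without any hidden identification coming from $\Ga$.
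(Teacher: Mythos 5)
Your proposal is correct and follows essentially the same route as the paper. The paper does not give a formal proof of this theorem; it simply states that the result follows by ``combining the above discussions with Theorems \ref{middleline} and \ref{maintheorem2}'' (where ``above discussions'' include the length ratio $l(C)/l(c')=2/n_{\pi'}$ stated just before the theorem), and then records after the statement exactly the identity $2-\delta(\pi',\Ga)=\delta(\pi,\Ga)$ that you invoke. Your decomposition into (i) the descent of the middle-line correspondence to $\A_\Ga$ and the factor-matching in $Z_2$ versus $Z_\semi$, accounting for the $2$-to-$1$ map when $\pi=\pi_\st$ via the exponent $n_{\pi'}=2$, and (ii) the substitution $u\mapsto u^{2/n_{\pi'}}$ in Theorem \ref{maintheorem2} combined with the $\delta$-complementarity, matches precisely the implicit argument of the paper, so there is nothing to add; your flagged ``main obstacle'' about primitivity and equivalence descending under $\Ga$ is indeed the one point the paper leaves unstated, and your handling of it is consistent with the paper's level of rigor.
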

Here we have used the fact that $2 - \delta(\pi', \Ga) = \delta(\pi, \Ga)$ in case that $\A_\Ga$ is a Klein bottle.

The main goal of this subsection is to prove the following result relating the Langlands L-function, the zeta functions of geodesic walks and the zeta functions of geodesic galleries for $\A_\Ga$, similar to the known results for the quotients of the building attached to $G(F)$ obtained in \cites{KL1, KLW, FLW}.
\begin{theorem}\label{Aidentity}
Let $\Ga$ be a torsion-free subgroup of $W_\ext$ with $N=|\Ga \backslash W_\ext/W|.$ Let $\{\pi, \pi'\}$ be $\{\pi_1, \pi_2 \}$ for $G = {\rm PGL}_3$ and $\{\pi_\spin, \pi_\st \}$ for $G= {\rm PGSp}_4$. Denote by $\epsilon(\pi)$ the multiplicity of the trivial weight in $\wt(\pi)$. The following identity holds:
$$(1-u)^{\epsilon(\pi) N} L(\Ga \bs W_\ext,\pi , u) = \frac{ Z(\A_\Ga,\pi,u) Z(\A_\Ga,\pi', u^{2/n_{\pi'}})^{n_{\pi'}}}
{Z_2(\A_\Ga, \pi,-u)}.$$
More precisely, when $G$ is of type $\tilde{A}_2$,
we have
\begin{eqnarray}\label{A2}
L(\Ga \bs W_\ext,\pi_1 , u) = L(\Ga \bs W_\ext, \pi_2, u) = \frac{ Z(\A_\Ga,\pi_1,u) Z(\A_\Ga,\pi_2, u^2)}
{Z_2(\A_\Ga, \pi_1,  -u)}.
\end{eqnarray}
When $G$ is of type $\tilde{C}_2$, we have
\begin{eqnarray}\label{spin}
L(\Ga \bs W_\ext,\pi_{\spin} , u) = \frac{ Z(\A_\Ga,\pi_\spin,u)  Z(\A_\Ga,\pi_\st, u^2)}
{Z_2(\A_\Ga, \pi_\spin,  -u)}
\end{eqnarray}
and
\begin{eqnarray}\label{st}
(1-u)^{N} L(\Ga \bs W_\ext,\pi_\st, u) =\frac{  Z(\A_\Ga,\pi_\st,u) Z(\A_\Ga,\pi_\spin, u)^2}
{Z_2(\A_\Ga, \pi_\st,-u)}.
\end{eqnarray}
\end{theorem}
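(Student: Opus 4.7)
The plan is to derive the identity by combining the two ingredients already established. From Theorem \ref{maintheorem1} (extended to the torus case by Theorem \ref{maintheorem0} and the convention $\delta(\pi,\Ga)=0$) I can write
\[
(1-u)^{\epsilon(\pi)N}L(\Ga\bs W_\ext,\pi,u)=Z(\A_\Ga,\pi,u)\cdot R_1(u),\qquad R_1(u):=\Bigl(\tfrac{1+u^{\k/\n}}{1-u^{\k/\n}}\Bigr)^{\n\,\delta(\pi,\Ga)},
\]
and from Theorem \ref{zeta2andzeta}
\[
Z_2(\A_\Ga,\pi,u)=Z(\A_\Ga,\pi',u^{2/n_{\pi'}})^{n_{\pi'}}\cdot R_2(u),\qquad R_2(u):=\Bigl(\tfrac{1+u^{\k/n_{\pi'}}}{1-u^{\k/n_{\pi'}}}\Bigr)^{\delta(\pi,\Ga)(1-m_\Ga)\,n_{\pi'}}.
\]
Substituting $-u$ for $u$ in the second relation and comparing with the first, the target identity reduces to two independent claims: (A) $R_1(u)R_2(-u)=1$; and (B) $Z(\A_\Ga,\pi',u^{2/n_{\pi'}})^{n_{\pi'}}=Z(\A_\Ga,\pi',(-u)^{2/n_{\pi'}})^{n_{\pi'}}$.

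Claim (B) is automatic when $n_{\pi'}=1$, which handles (\ref{A2}) and (\ref{spin}). The only nontrivial case is (\ref{st}), where $\pi'=\pi_\spin$ and $n_{\pi'}=2$, in which (B) is equivalent to the statement that every primitive closed $\pi_\spin$-geodesic walk in $\A_\Ga$ has even normalized length. I will prove this by a parity argument: in coordinates where $\wt(\pi_\spin)=\{(\pm\tfrac12,\pm\tfrac12)\}$, the displacement of such a walk of length $N$ is $(s/2,t/2)$ with $s\equiv t\equiv N\pmod 2$. Matching this against $\ga(x)-x$ for $\ga\in\Ga$ a translation in $\Lambda_r$ (integer coordinates, forcing $N$ even) or an odd power of $\sigma$ or $t\sigma$, together with the evenness of $b$ (case (b), $\Ga$ of type $\pi_\st$) or of $a$ (case (c), $\Ga$ of type $\pi_\spin$) noted in the proof of Proposition \ref{prop3}, forces $N\equiv 0\pmod 2$.

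Claim (A) is vacuous when $\delta(\pi,\Ga)=0$; this covers the torus case and the $\tilde{C}_2$ sub-cases in which $\Ga$ is of type $\pi$. When $\delta(\pi,\Ga)\ne 0$, a direct check of the scenarios (a), (b), (c) in \S 4.1 shows that $\n$ and $n_{\pi'}$ agree; calling this common value $n$, both $R_1(u)$ and $R_2(-u)$ involve only the base $u^{\k/n}$, with exponents of absolute value $n\,\delta(\pi,\Ga)$. Proposition \ref{prop3} couples $m_\Ga$ with the parity of $\k/n$: if $\k/n$ is even then $(-u)^{\k/n}=u^{\k/n}$ while $1-m_\Ga=-1$ flips the sign of the exponent in $R_2(-u)$; if $\k/n$ is odd then $1-m_\Ga=+1$ but $(-u)^{\k/n}=-u^{\k/n}$ inverts the fraction. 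Either way one obtains $R_2(-u)=R_1(u)^{-1}$.

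The main obstacle, in my view, is claim (B). Although each sub-step is elementary, it must be carried out separately for each kind of closing element in $\Ga$ (translations, $\sigma^{2k+1}$, $(t\sigma)^{2k+1}$) and hinges on the parity of the translation part of $\sigma$ enforced by $\Ga\subseteq W_\aff$. By contrast, claim (A) is a mechanical exponent-matching once one notices the coincidence $\n=n_{\pi'}$ in the regime where $\delta(\pi,\Ga)$ is nonzero.
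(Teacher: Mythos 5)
Your overall route is essentially the paper's: you combine Theorem \ref{maintheorem1} (with the convention $\delta(\pi,\Ga)=0$ in the torus case) and Theorem \ref{zeta2andzeta}, and reduce the target identity to a check that the residual factors match up, exactly as the paper does via its equation (\ref{Loverzeta}). Your rearrangement of the case analysis into claims (A) and (B), with (A) handled by matching exponents and bases of $\bigl(\tfrac{1+u^{\k/n}}{1-u^{\k/n}}\bigr)$ using the dichotomy of Proposition \ref{prop3} (odd $\k/n_\Ga\Leftrightarrow m_\Ga=0$), is a tidy reorganization of the same bookkeeping the paper carries out in its two sub-cases $\delta=0$ and $\delta\ne0$; the identity $\n=n_{\pi'}$ when $\delta(\pi,\Ga)\ne 0$ is indeed the pivot in both.

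Where you genuinely diverge from the paper is in claim (B), the evenness of $Z(\A_\Ga,\pi_\spin,u)$ (equivalently, that every closed rational $\pi_\spin$-geodesic walk in $\A_\Ga$ has even normalized length). You propose a coordinate/parity bookkeeping argument, carried out separately for closures by translations in $\Lambda_r$, by $\sigma^{2k+1}$, and by $(t\sigma)^{2k+1}$, using the parity of the translation parts $a$, $b$ of $\sigma$ from Proposition \ref{prop3}. This can be made to work, but two things should be said. First, you need more than "evenness of $a$ in case (c)": you also need the integrality constraint $v\in\Lambda$ (e.g.\ for $\lambda=\pm\alpha$ in case (c) the equation $\sigma^m(v)=v+N\lambda$ forces $b=2d$ with $d\in\Z$, hence $b$ even, which then yields $N$ even), so the bookkeeping is delicate. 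Second, the paper bypasses all of this with a one-line type argument: the two endpoints of each edge of a $\pi_\spin$-geodesic are special vertices of different types (one primitive special, one not), and since $\Ga$ preserves types a closed $\pi_\spin$-walk must have even length. The paper uses the same idea to dispose of the $\delta=0$ case at once, by showing $Z_2(\A_\Ga,\pi,u)=Z_2(\A_\Ga,\pi,-u)$. Your approach arrives at the same conclusion but at the cost of a multi-case parity analysis that the paper's type-theoretic observation renders unnecessary; you would do well to replace it by that observation.
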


\begin{proof} In view of (\ref{Loverzeta}), it remains to show that the right hand side of (\ref{Loverzeta}) is equal to  the ratio $Z(\A_\Ga, \pi', u^{2/n_{\pi'}})^{n_\pi'}/Z_2(\A_\Ga, \pi, -u)$. When $\delta(\pi, \Ga) = 0$, $G$ is of type $\tilde{C_2}$ and $\Gamma$ is of type $\pi$. 
Since $\Gamma$ preserves the types of vertices and each side of a $\pi$-chamber connects vertices of different types, all closed $\pi$-geodesic galleries must have even lengths in order to be closed by $\Ga$. Therefore, $Z_2(\A_\Ga, \pi, u) = Z_2(\A_\Ga, \pi, -u)$. The desired equality follows from Theorem \ref{zeta2andzeta}.

Next assume $\delta(\pi, \Ga) \ne 0$. This happens when $G=$PGL$_3$, in which case $n_{\pi'} = n_\Ga = 1$, or $G=$PGSp$_4$ and $\Ga$ is not of type $\pi$, hence $\Ga$ is of type $\pi'$ so that $n_{\pi'} = n_\Ga$. Recall from Proposition \ref{prop3} that $\k/n_\Ga = \k/n_{\pi'}$ is odd if and only if the two glide reflection axes $L_0$ and $L_1$ are semi-rational, that is, $m_\Ga = 0$.
In this case Theorem \ref{zeta2andzeta} implies

$$ \frac{Z(\A_\Ga, \pi', (-u)^{2/n_{\pi'}})^{n_{\pi'}}}{Z_2(\A_\Ga, \pi, -u)} =  \left(\frac{1+u^{\k/n_{\pi'}}}{1-u^{\k/n_{\pi'}}}\right)^{\delta(\pi,\Ga)n_{\pi'}}.$$
Furthermore, $Z(\A_\Ga, \pi', (-u)^{2/n_{\pi'}})$ is an even function. This is obvious when $n_{\pi'} = 1$. When $n_{\pi'} = 2$, i.e. $\pi' = \pi_\spin$, a closed rational $\pi_\spin$-geodesic $c$ has even length because the two end points of each edge in $c$ are of different types, with one point being primitive special and the other  nonprimitive special. Therefore the above identity can be rewritten as
\begin{eqnarray}\label{quotient}
 \frac{Z(\A_\Ga, \pi', u^{2/n_{\pi'}})^{n_{\pi'}}}{Z_2(\A_\Ga, \pi, -u)} =  \left(\frac{1+u^{\k/n_{\Ga}}}{1-u^{\k/n_{\Ga}}}\right)^{\delta(\pi,\Ga)n_{\Ga}}.
\end{eqnarray}
\noindent This equality also holds for even $\k/n_{\pi'}$ by Theorem \ref{zeta2andzeta} because geodesic galleries in  $\A_\Ga$ have even lengths. This completes the proof of the theorem.
\end{proof}

\section{Zeta functions of finite quotients of buildings}

As before let $F$ be a non-Archimedean local field with $q$ elements in its residue field. Denote by $\O$ its ring of integers and $\varpi$ a fixed uniformizer. Zeta functions counting closed geodesics contained in the 1-skeleton of finite quotient complexes
arising from the Bruhat-Tits buildings of PGL$_3(F)$ and PGSp$_4(F)$ have been studied in
\cites{KLW, KL1, FLW}. Such a zeta function is expressed in two ways combinatorially: one in terms of edge adjacency operators, and the other in terms of vertex adjacency operators and the chamber adjacency operator. The establishment of these zeta identities there mostly relies on calculations with lattice models of the Bruhat-Tits buildings and knowledge from representation theory. In this section, we recast these expressions using group theoretic language as  we did for finite quotients of apartments in the previous sections. This will facilitate  comparison with results from section 5. We also establish a new zeta identity involving degree 5 standard L-function in the case of PGSp$_4(F)$, presented in Theorem \ref{deg5}.

In what follows, $G = $ PGL$_3$ or PGSp$_4$, and $K=G(\O)$ denotes the standard maximal compact subgroup of $G(F)$. The  associated building $\B$ is a contractible $2$-dimensional simplicial complex. Fix a discrete torsion-free cocompact subgroup $\Ga$ of $G(F)$ such that $\ord_F(\det \Ga) \subset 3\mathbb{Z}$ if $G = $PGL$_3$ and $\ord_F(\det \Ga) \subset 4\mathbb{Z}$ if $G = $PGSp$_4$. The action of $\Ga$ on $\B$ by left translation preserves the types of the vertices of the building. The quotient $\B_\Ga = \Ga \bs \B$ is a finite $2$-dimensional simplicial complex.

\subsection{$L$-functions and zeta functions of geodesic walks and galleries}
Let $\pi$ be a representation of the dual group $\hat G(\mathbb C)$. Denote by $\Irr^\unr(\Ga\backslash G(F))$ the set of irreducible unramified subrepresentations of $G(F)$ (with multiplicity) occurring in  $L^2(\Ga \bs G(F))$. A representation $\rho \in \Irr^\unr(\Ga \backslash G(F))$ is a constituent of the representation
parabolically induced from a character $\chi_\rho$ of the maximal split torus of $G(F)$.
The $L$-function of type $\pi$ associated to $\Ga \bs G(F)$ is defined as {a product of local
Langlands $L$-functions}
$$ L(\Ga \bs G(F), \pi, u) := \prod_{\rho \in \Irr^\unr(\Ga \backslash G(F))} L(\rho, \pi, u)= \prod_{\rho \in \Irr^\unr(\Ga \backslash G(F))}~ \prod_{\la \in \wt(\pi)} (1- \chi_\rho( \la) u )^{-1},$$
just like the apartment case (cf. \S 3.4).

The weights of $\pi$ on the standard apartment $\A$ of $\B$ are defined in \S2.3 and $\pi$-geodesics  on $\A$ and its quotients are defined in \S 3.1-3.2. Since a closed geodesic walk $c$ in $\B_\Ga$ lifts to a geodesic walk $c'$ on an apartment of $\B$ which is the image of $\A$ under left translation by some element $g \in G(F)$, we say that $c$ is a $\pi$-geodesic if $c'$ is a $g(\wt(\pi))$-geodesic in $g\A$.
Similarly, the zeta function of $\pi$-geodesic walks on $\B_\Ga$ is defined as
$$ Z(\B_\Ga,\pi,u) = \prod_{c}(1- u^{l(c)})^{-1}$$
where $c$ runs through all primitive rational closed $\pi$-geodesics in $\B_\Ga$, and the normalized length $l(c)$ is equal to the length of $c$ divided by the length $\ell(\pi)$ of the nontrivial weights of $\pi$.  Here the rationality condition means that the geodesic is contained in the $1$-skeleton of $\B_\Ga$, as in \S 3.3.

Similar to the apartment case, we also consider $\pi$-geodesic galleries in $\B_\Ga$. Like walks, a $\pi$-geodesic gallery in $\B$ is contained in an apartment $g(\A)$ for some $g \in G(F)$, and the various terminologies for $\pi$-geodesic galleries in the apartment case discussed in \S5 can be carried over to the building $\B$ and its quotients $\B_\Ga$. In particular, a closed $\pi$-geodesic gallery $C$ in $\B_\Ga$ means that its lifting $C'$ in $g(\A)$ is of type $g(\wt'(\pi))$.
Define the {zeta function of $\pi$-geodesic galleries} of $\B_\Ga$ to be
$$ Z_2(\B_\Ga, \pi ,u) = \prod_{C}(1- u^{l(C)})^{-1}, $$
where $C$ runs through the equivalence classes of all primitive closed $\pi$-geodesic galleries in $\B_\Ga$, and the length $l(C)$ is the number of $\pi$-chambers contained in a gallery in the class of $C$.

\subsection{Type $\tilde A_2$}
Suppose $G=\PGL_3$. We draw connections between the functions defined above and the functions occurring in the zeta identities in \cites{KL1, KLW}.
Let the representations $\pi_1$ and $\pi_2$ of SL$_3(\mathbb C) = \hat G(\mathbb C)$ be as in \S 5.1. These are minuscule representations. The weights in $\wt(\pi_1)$ can be identified with the vertices $\diag(\varpi, 1, 1)$, $\diag(1, \varpi, 1)$ and $\diag(1, 1, \varpi)$ in the standard apartment of $\B$, and the weights in $\wt(\pi_2)$ are opposite of those in $\wt(\pi_1)$.  This immediately implies $Z(\B_\Ga,\pi_1,u) = Z(\B_\Ga,\pi_2,u)$ and $Z_2(\B_\Ga,\pi_1,u) = Z_2(\B_\Ga,\pi_2,u) $.

Given a representation $\rho \in \Irr^\unr(\Ga \backslash G(F))$,
the character $\chi_\rho$ may be identified, up to permutation, with a triple of unramified characters $(\chi_1, \chi_2, \chi_3)$ of $F^\times$ with product equal to $1$. Then
$$\prod_{\la \in \wt(\pi_1)} (1 - \chi_\rho(\lambda)u) =  (1-\chi_1(\varpi)u)(1-\chi_2(\varpi)u)(1-\chi_3(\varpi)u).$$
As shown in \cites{KLW, KL1}, this gives rise to an expression for the Langlands $L$-function in terms of Hecke operators  $A_1$ and $A_2$  on $L^2(\Ga \bs G(F)/K)$ associated to the $K$-double cosets represented by $\diag(1, 1, \varpi)$ and $\diag(1, \varpi, \varpi)$, respectively:

$$L(\Ga \bs G(F), \pi_1, qu) = \frac{1}{\det(I-A_1u+q A_2u^2-q^3u^3I)}.$$

\noindent Further, it was shown in \cite{KL1} that
$$Z(\B_\Ga, \pi_1, u) =\frac{1}{ \det(I - L_Eu)} \quad {\rm{and}} \quad Z(\B_\Ga, \pi_2, u) = \frac{1}{\det(I - (L_E)^t u)},$$
where $E$ is a parahoric subgroup of $K$,  $L_E$ is the parahoric operator on $L^2(\Ga \bs G(F)/E)$ associated to the double coset $E\diag(1, 1, \varpi)E$, and $(L_E)^t$ is the transpose of $L_E$.   Likewise, the type 2 chamber zeta function $Z_{2,2}(\B_\Ga, u)$ in \cite{KL1}, which counts the number of closed
$\pi_1$-geodesic galleries in $\B_\Ga$, is nothing but $Z_2(\B_\Ga, \pi_1, u)$. As shown in \cite{KL1}, it can be expressed using a suitable Iwahori-Hecke operator $L_I$ acting on $L^2(\Ga \bs G(F)/I)$ associated to a double coset of the Iwahori subgroup $I$ of $K$. More precisely, we have

$$Z_{2,2}(\B_\Ga, u) = Z_2(\B_\Ga, \pi_1, u) = \frac{1}{\det(I - L_Iu)}.$$

The zeta function of geodesic walks of $\B_\Ga$, denoted $Z(\B_\Ga, u)$, counts the number of rational closed geodesic walks in
$\B_\Ga$ in algebraic length; these closed walks turn out to be either $\pi_1$-walks or $\pi_2$-walks. Hence we have
$$Z(\B_\Ga, u) = Z(\B_\Ga, \pi_1, u)Z(\B_\Ga, \pi_2, u^2) = \frac{1}{\det(I-L_E u)\det(I-(L_E)^t u^2)}.$$
It was proved in \cites{KL1, KLW} that $Z(\B_\Ga, u)$ affords another expression in terms of the Hecke operators $A_1, A_2$ and the Iwahori-Hecke operator $L_I$:

\begin{equation*}
Z(\B_\Ga, u) %= \frac{1}{\det(I-L_E u)\det(I-(L_E)^t u^2)} \\
= \frac{(1-u^3)^{\chi(\B_\Ga)}}{\det(I-A_1u+q A_2u^2-q^3u^3I) \det(I + L_Iu)},
\end{equation*}
where $\chi(\B_\Ga)$ is the Euler characteristic of $\B_\Ga$. Combining the above two expressions for $Z(\B_\Ga, u)$, we rephrase the identity obtained in terms of the $L$-function and the zeta functions of geodesic walks and galleries in

\begin{theorem}\label{buildingA2}
With $G=$PGL$_3$, the following identity for the finite quotient $\B_\Ga$ holds:
\begin{eqnarray}\label{A2-building}
 (1-u^3)^{\chi(\B_\Ga)}L(\Ga \bs G(F), \pi_1, qu) = \frac{Z(\B_\Ga, \pi_1, u) Z(\B_\Ga, \pi_2, u^2)}{Z_2(\B_\Ga, \pi_1, -u)},
\end{eqnarray}
where $\chi(\B_\Ga)$ denotes the Euler characteristic of $\B_\Ga$.
\end{theorem}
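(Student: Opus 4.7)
The plan is to derive this identity by combining the operator-theoretic expressions for each of the four quantities appearing in the statement, which are individually recorded in the excerpt just before the theorem. No new combinatorial or representation-theoretic input is required; the work lies entirely in bookkeeping and in one careful sign substitution.

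First I would collect the four formulas. The Langlands $L$-function is
\[
L(\Ga \bs G(F), \pi_1, qu) = \frac{1}{\det(I-A_1u+q A_2u^2-q^3u^3I)}.
\]
The walk zeta functions are
\[
Z(\B_\Ga,\pi_1,u) = \frac{1}{\det(I - L_E u)},\qquad Z(\B_\Ga,\pi_2,u^2) = \frac{1}{\det(I - (L_E)^t u^2)},
\]
and the gallery zeta function is
\[
Z_2(\B_\Ga,\pi_1,u) = \frac{1}{\det(I - L_I u)},\qquad \text{hence}\qquad Z_2(\B_\Ga,\pi_1,-u) = \frac{1}{\det(I + L_I u)}.
\]

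Next I would invoke the two-variable Ihara-type identity established in \cites{KL1, KLW} for the full geodesic-walk zeta function of $\B_\Ga$:
\[
Z(\B_\Ga, u) = Z(\B_\Ga, \pi_1, u)\,Z(\B_\Ga, \pi_2, u^2) = \frac{(1-u^3)^{\chi(\B_\Ga)}}{\det(I-A_1u+q A_2u^2-q^3u^3I)\,\det(I + L_I u)}.
\]
Multiplying both sides by $\det(I+L_I u)$ and using the expression for $L(\Ga \bs G(F), \pi_1, qu)$ yields
\[
Z(\B_\Ga, \pi_1, u)\,Z(\B_\Ga, \pi_2, u^2)\,\det(I + L_I u) = (1-u^3)^{\chi(\B_\Ga)}\, L(\Ga \bs G(F), \pi_1, qu).
\]
Finally, replacing $\det(I+L_I u)$ with $1/Z_2(\B_\Ga,\pi_1,-u)$ (from the computation above) and rearranging gives the identity \eqref{A2-building}.

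There is essentially no serious obstacle; the only point requiring care is the sign substitution that turns $\det(I - L_I u)$ into $\det(I + L_I u)$, which is precisely why the gallery zeta function appears evaluated at $-u$ in the statement. The proof is therefore a two-line bookkeeping argument once all of the operator identities from \cites{KL1, KLW} are in hand.
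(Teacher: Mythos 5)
Your proposal is correct and follows exactly the same line of argument as the paper: equating the two operator-theoretic expressions for $Z(\B_\Ga, u)$ from \cites{KL1, KLW}, substituting the determinant formulas for each of the four factors, and observing that $\det(I+L_I u) = 1/Z_2(\B_\Ga,\pi_1,-u)$. The paper presents Theorem~\ref{buildingA2} precisely as a rephrasing of these combined identities, so there is nothing different to compare.
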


Let $N_0, N_1$, and $N_2$ denote the number of vertices, edges, and chambers in $\B_\Ga$, respectively. Then $N_1=(q^2+q+1)N_0$ and $N_2=
\frac{1}{3}(q+1)(q^2+q+1)N_0$ so that the Euler characteristic
$$\chi(\B_{\Ga})=N_0-N_1+N_2=\frac{1}{3}(q-1)^2(q+1)N_0.$$
Notice that when $q$ is set to $1$, $\chi(\B_{\Ga})$ vanishes and (\ref{A2-building})  reduces to (5.2) in Theorem \ref{Aidentity}.

\subsection{Type $\tilde C_2$}
Suppose now that $G=\PGSP_4$, then $\hat G(\mathbb C)=\Spin_5(\mathbb C)\simeq \Sp_4(\mathbb C)$. As in \S 5.2, denote by $\pi_\spin$ the degree $4$ spin representation of $\hat G(\mathbb C)$, which is minuscule, and by $\pi_\st$ the degree $5$ standard representation of $\hat G(\mathbb C)$, which is fundamental quasi-minuscule.
The weights in $\wt(\pi_\spin)$ can be identified with $\diag(1, 1, \varpi, \varpi)$, $\diag(\varpi, \varpi, 1, 1)$, $\diag(1, \varpi, 1, \varpi)$, and $\diag(\varpi, 1, \varpi, 1)$ in the standard apartment of $\B$,
while the non-trivial weights in $\wt'(\pi_\st)$ are identified with the matrices $\diag(\varpi^{-1}, 1, 1, \varpi)$, $\diag(\varpi, 1, 1, \varpi^{-1})$,
$\diag(1, \varpi^{-1}, \varpi, 1)$, and
$\diag(1, \varpi, \varpi^{-1}, 1)$.

Let $N_0, N_1$, and $N_2$ denote the number of vertices, edges, and chambers in $\B_\Ga$, respectively. It follows from Proposition 2.2 in
\cite{FLW} and the proof therein that $N_0=(q^2+3)N_p$, where $N_p$ counts the number of primitive special vertices in $\B_\Ga$. Then $N_1=3(q^3+q^2+q+1)N_p$ and as $\B_\Ga$ is a $(q+1)$-regular complex,
each edge is contained in $q+1$ chambers, so $N_2=(q^3+q^2+q+1)(q+1)N_p$.
The Euler characteristic of $\B_\Ga$ is
$$\chi(\B_{\Ga})=N_0-N_1+N_2=(q-1)^2(q^2+q+1)N_p.$$
In particular, it vanishes when $q$ is set to $1$.

The extended affine Weyl group $W_{ext}$ of $G$ is generated by 
$$s_0=\left(\begin{smallmatrix} &&& -\varpi^{-1}\\&1&&\\&&1&\\ \varpi&&& \end{smallmatrix}\right), s_1=\left(\begin{smallmatrix} &1&&\\1&&&\\&&&1\\&&1&\end{smallmatrix}\right), s_2=\left(\begin{smallmatrix} 1&&&\\&&1&\\&-1&&\\&&&1\end{smallmatrix}\right), \mbox{and }
\tau=\left(\begin{smallmatrix} &&1&\\&&&1\\ \varpi &&&\\&\varpi&&\end{smallmatrix}\right).$$

\subsubsection{Spin representation}
This case has been studied in detail in \cite{FLW}. The zeta identities are analogous to the PGL$_3$ case, but more complicated. We proceed to draw connections between zeta functions defined in \cite{FLW} and those defined in \S6.1.

A representation $\rho \in \Irr^\unr(\Ga \backslash G(F))$ is isomorphic to a subrepresentation of  a principal series $\chi_1 \times \chi_2 \rtimes  \sigma = \Ind \chi_\rho$ for some unramified characters $\chi_1, \chi_2, \sigma$  of $F^\times$ satisfying $\chi_1 \chi_2 \sigma^2 = 1$. The associated character $\chi_\rho$ sends an element $\diag(a, b, cb^{-1}, ca^{-1})$ in the maximal split torus of $G(F)$ to $\chi_1(a)\chi_2(b)\sigma(c)$. Therefore
$$\prod_{\la \in \wt(\pi_\spin)} (1 - \chi_\rho(\lambda)u) =  (1-\chi_1\sigma(\varpi)u)(1-\chi_2\sigma(\varpi)u)(1-\chi_1\chi_2\sigma(\varpi)u)(1-\sigma(\varpi)u).$$
It was shown in \cite{FLW} that the $L$-function of type $\pi_\spin$ can be expressed in terms of the Hecke operators $A_1$ and $A_2$ acting on the space $L^2(\Ga \bs G(F)/K)$ represented by the double cosets $K\diag(1, 1, \varpi, \varpi)K$ and $K\diag(\varpi^{-1}, 1, 1, \varpi)K$ plus $(q^2+1)$ times the identity operator, respectively, in the following way:
$$ L(\Ga \bs G(F), \pi_\spin, q^{3/2}u) = \frac{1}{\det(I-A_1 u+qA_2 u^2-q^3 A_1 u^3+q^6I u^4)}.$$
Moreover, the zeta function of $\pi_\spin$-geodesic walks agrees with the zeta function using only type $1$ edges in \cite{FLW} and hence can be expressed by the operator $L_{P_1}$ on $L^2(\Ga \bs G(F)/P_1)$ associated to a double coset of the Siegel subgroup $P_1$ of $K$:
$$ Z(\B_\Ga, \pi_\spin, u) = \frac{1}{\det(I-L_{P_1}u)}.$$
Similarly the zeta function of $\pi_\st$-geodesic walks agrees with the zeta function using only type $2$ edges in \cite{FLW} and
$$Z(\B_\Ga, \pi_\st, u) = \frac{1}{\det(I-L_{P_2}u)},$$
where $L_{P_2}$ is an operator on $L^2(\Ga \bs G(F)/P_2)$ associated to a double coset of the Klingen subgroup $P_2$ of $K$. Each $\pi_\spin$-chamber consists of two oriented chambers, with the orientation being the central directed edge of the $\pi_\spin$-chamber. The oriented chambers of $\B$ are parametrized by the cosets of the Iwahori subgroup $I$ in $G(F)$.
The Iwahori-Hecke operator $L_I$ on $L^2(\Ga \bs G(F)/I)$ associated to the double coset $ItI$ defined in \S3.3 of \cite{FLW} describes the adjacency of the $\pi_\spin$-chambers in $\B_\Ga$, where $t=s_0s_1\tau$.  As  observed in Hashimoto \cite{Ha1}, the trace of $L_I^n$ counts the number of closed $\pi_\spin$-geodesic galleries in $\B_\Ga$ of length $n$. Hence the zeta function of $\pi_\spin$-geodesic galleries of $\B_\Ga$ is given by
$$ Z_2(\B_\Ga, \pi_\spin, u) = \frac{1}{\det(I - L_Iu)}.$$

Analogous to the PGL$_3$ case, the zeta function of geodesic walks of $\B_\Ga$  counts the number of rational closed geodesic walks in $\B_\Ga$ in algebraic length; it is equal to
$$Z(\B_\Ga, u) = Z(\B_\Ga, \pi_\spin, u)Z(\B_\Ga, \pi_\st, u^2) = \frac{1}{\det(I-L_{P_1}u)\det(I-L_{P_2}u^2)}.$$
Theorem 1.1 of \cite{FLW} gives another expression of $Z(\B_\Ga, u)$ in terms of the Hecke operators $A_1$ and $A_2$ as well as the Iwahori-Hecke operator $L_I$. Combined, we obtain the zeta identity
$$\frac{(1-u^2)^{\chi(\B_{\Gamma})}(1-q^2 u^2)^{-(q^2-1)N_p}}{\det(I-A_1 u+qA_2 u^2-q^3 A_1 u^3+q^6I u^4)}
= \frac{\det(I-L_I u)}{\det(I-L_{P_1} u)\det(I-L_{P_2}u^2)}.$$
This is reinterpreted in terms of the Langlands $L$-function and the zeta functions of geodesic walks and galleries as

\begin{theorem} For $G =\PGSP_4$, there holds the identity for $\B_\Ga$:
\begin{eqnarray*}
(1-u^2)^{\chi(\B_{\Gamma})}(1-q^2 u^2)^{-(q^2-1)N_p}L(\Ga \bs G(F), \pi_\spin, q^{3/2}u) =
\frac{Z(\B_\Ga, \pi_\spin, u)Z(\B_\Ga, \pi_\st, u^2)}{Z_2(\B_\Ga, \pi_\spin, u)}.
\end{eqnarray*}
Here $N_p$ denotes the number of primitive special vertices in $\B_\Ga$, and $\chi(\B_{\Gamma})$ is the Euler characteristic of $\B_\Ga$.
\end{theorem}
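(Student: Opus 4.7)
The plan is to reduce the identity to the operator-determinant identity of Theorem~1.1 in \cite{FLW} by translating every factor into its operator-theoretic form. More precisely, the spin $L$-function admits the product expansion
\begin{equation*}
L(\Ga \bs G(F), \pi_\spin, q^{3/2}u) = \frac{1}{\det(I - A_1 u + qA_2 u^2 - q^3 A_1 u^3 + q^6 I u^4)},
\end{equation*}
while the walk zeta functions for $\pi_\spin$ and $\pi_\st$, and the gallery zeta function for $\pi_\spin$, equal $\det(I - L_{P_1} u)^{-1}$, $\det(I - L_{P_2} u^2)^{-1}$, and $\det(I - L_I u)^{-1}$ respectively. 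Substituting these into both sides of the asserted identity would reduce it to
\begin{equation*}
\frac{(1-u^2)^{\chi(\B_{\Ga})}(1-q^2 u^2)^{-(q^2-1)N_p}}{\det(I - A_1 u + qA_2 u^2 - q^3 A_1 u^3 + q^6 I u^4)} = \frac{\det(I - L_I u)}{\det(I - L_{P_1} u)\det(I - L_{P_2} u^2)},
\end{equation*}
which is exactly Theorem~1.1 of \cite{FLW}.

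The essential preliminary is therefore to verify the four operator-determinant expressions above. For the $L$-function, I would unwind $\prod_{\lambda \in \wt(\pi_\spin)}(1 - \chi_\rho(\lambda) u)$ using the principal series parameters $(\chi_1, \chi_2, \sigma)$ exhibited at the start of \S6.3.1 and recognize the resulting product over $\rho \in \Irr^\unr(\Ga \bs G(F))$ as a characteristic polynomial via the Hecke--Satake correspondence, with $A_1$ and $A_2$ the Hecke operators on $L^2(\Ga \bs G(F)/K)$ attached to the double cosets $K\diag(1,1,\varpi,\varpi)K$ and $K\diag(\varpi^{-1},1,1,\varpi)K$ (shifted by $q^2+1$ times the identity). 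For the walk zeta functions, I would follow the argument of Theorem~\ref{infinitesum1}: after identifying $\pi_\spin$- and $\pi_\st$-geodesic walks with walks on type-$1$ and type-$2$ edges via the matrix representatives of the weights, the trace of $L_{P_i}^n$ counts the closed rational geodesic walks of the corresponding type and length $n$. The gallery identity proceeds in the same spirit, using Hashimoto's observation \cite{Ha1} that $L_I$ encodes the compatible adjacency of oriented chambers respecting the central-directed-edge condition of a $\pi_\spin$-geodesic gallery.

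The main obstacle is conceptual rather than computational: one must carefully confirm that the group-theoretically defined $\pi_\spin$- and $\pi_\st$-geodesic walks and the $\pi_\spin$-geodesic galleries introduced in \S6.1 coincide with the type-$1$ walks, type-$2$ walks, and chamber galleries counted in \cite{FLW}. The explicit diagonal weight representatives listed at the start of \S6.3 make this matching direct: the four diagonal matrices identified with $\wt(\pi_\spin)$ are precisely the endpoints of the type-$1$ edges emanating from the base special vertex, and similarly for $\pi_\st$ and type-$2$ edges. Once these identifications are in place and the four operator-determinant formulas are established, the theorem follows by direct substitution and Theorem~1.1 of \cite{FLW}, so no further calculation is needed beyond collecting the signs and exponents.
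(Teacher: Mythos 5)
Your proposal is correct and follows essentially the same route as the paper: express $L(\Ga \bs G(F), \pi_\spin, q^{3/2}u)$, $Z(\B_\Ga,\pi_\spin,u)$, $Z(\B_\Ga,\pi_\st,u)$, and $Z_2(\B_\Ga,\pi_\spin,u)$ as reciprocals of operator determinants involving $A_1,A_2,L_{P_1},L_{P_2},L_I$, then substitute into Theorem~1.1 of \cite{FLW}. The only point the paper makes explicit that you leave implicit is that $Z_2(\B_\Ga,\pi_\spin,-u)=Z_2(\B_\Ga,\pi_\spin,u)$ because $\pi_\spin$-weights connect vertices of different types so every closed gallery has even length, which is why no sign appears in the denominator.
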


Because the weights of $\pi_\spin$ connect vertices of different types, each closed $\pi_\spin$-geodesic gallery has even length,
so $Z_2(\B_\Ga, \pi_\spin, -u)=Z_2(\B_\Ga, \pi_\spin, u)$. When $q$ is set to $1$, the identity in Theorem 6.2 agrees with (\ref{spin}) in Theorem \ref{Aidentity}.

\subsubsection{Standard representation}
Identity (\ref{st}) in Theorem \ref{Aidentity} suggests the existence of another zeta identity that involves the degree five standard $L$-functions attached to unramified representations of $G(F)$. In this subsection we establish such an identity and compare it with (\ref{st}).

For a representation $\rho \in \Irr^\unr(\Ga \backslash G(F))$ isomorphic to a subrepresentation of a principal series $\chi_1 \times \chi_2 \rtimes  \sigma = \Ind \chi_{\rho}$ as before, we have
$$\prod_{\la \in \wt'(\pi_\st)} (1 - \chi_\rho(\lambda)u) =(1-\chi_1(\varpi) u)(1-\chi_2(\varpi)u)(1-\chi_1^{-1}(\varpi) u)(1-\chi_2^{-1}(\varpi)u).$$
Let $N$ be the cardinality of the set $\Irr^\unr(\Ga \backslash G(F))$, {then $N=\dim L^2(\Ga \bs G)^K=2N_p$}. Recall that $\wt(\pi_\st)$ is the union of $\wt'(\pi_\st)$ with the trivial weight and $\chi_\rho$ evaluated at the trivial weight is $1$, hence
$$ (1 - u)^N L(\Ga \bs G(F), \pi_\st, u) = \prod_{\rho \in \Irr^\unr(\Ga \backslash G(F))} \prod_{\la \in \wt'(\pi)} (1- \chi_\rho( \la) u )^{-1}.$$
To connect the $L$-function of type $\pi_\st$ with Hecke operators, we keep {$A_1, A_2$ the same as in the previous subsection, and introduce two variations of $A_2$, namely the operators $A_2'=A_2-
2q^2 I$ and $A_2'' = A_2 - q^2 I$.}  By the same argument as in \cite{FLW} one obtains
$$(1 - q^2u)^N L(\Ga \bs G(F), \pi_\st, q^2u) =\frac{1}{\det(I-A_2' u+(qA_1^2-2q^2A_2'')u^2-q^4 A_2' u^3+q^8I u^4)}.$$

Let $L_I'$ be the operator on $L^2(\Ga \bs \B/I)$ represented by the double coset $It'I$, where
$t'=\diag(1, \varpi, \varpi, {\varpi}^2) s_1 =s_0 s_1 s_2$.
The operator $L_I'$ defines adjacency of the $\pi_\st$-chambers in $\B_\Ga$. Then the trace of $(L_I')^n$ counts the number of closed $\pi_\st$-geodesic galleries in $\B_\Ga$ of length $n$.  We obtain
$$ Z_2(\B_\Ga, \pi_\st, u)=  \frac{1}{\det(I - L_I'u)}.$$

Consider a new zeta function on $\B_\Ga$ given by
$$Z'(\B_\Ga, u) = \frac{1}{\det(I-L_{P_1}u)^2 \det(I-L_{P_2}u)},$$
which counts closed $\pi_\spin$-geodesic walks in $\B_\Ga$ twice and closed $\pi_\st$-geodesic walks once. Since
each  closed $\pi_\spin$-geodesic walk has even length as the weights of $\pi_\spin$ connect vertices of different types, $\det(I-L_{P_1}u) = \det(I + L_{P_1}u)$ so that
$$Z'(\B_\Ga, u) = \frac{1}{\det(I-L_{P_1}^2u^2) \det(I-L_{P_2}u)} = Z(\B_\Ga, \pi_\st, u)Z(\B_\Ga, \pi_\spin, u)^2.$$

We proceed to derive a second description of $Z'(\B_\Ga, u)$ in terms of the operators $A_1, A_2', A_2'',$ and $L_I'$.

Since $\Gamma$ is a discrete cocompact subgroup of $G(F)$, $L^2(\Gamma \bs G(F))$ decomposes into a direct sum of irreducible unitary representations. To study actions of the operators introduced above, we are only concerned with Iwahori-spherical representations. These representations are classified into types and described in Table 1 below, which is adapted from \cite{Sc}. Table 1 also records the dimensions of certain invariant subspaces. The column C counts the number
$$4\dim V^K +\dim V^{I}-(2 \dim V^{P_1}+\dim V^{P_2})$$ for each type of representation $(\rho, V)$.

\begin{center}
\begin{tabular}{|c|c|c|c|c|c|c|c|} \hline
\text{type} &\text{representation } $(\rho, V)$ &$V^K$  &$V^{P_2}$ &$V^{P_1}$ &$V^I$&&$C$ \\
\hline
% I
   {\rm I}&$\chi_1\times\chi_2\rtimes\sigma$
   &1&4&4&8  &&0   \\
   \hline
%II
   {\rm IIa}&$\chi\St_{\GL_2}\rtimes\sigma$&0&2&1&4   &&0    \\
   {\rm IIb}&$\chi\triv_{\GL_2}\rtimes\sigma$&1&2&3&4   &&0   \\
   \hline
%III
   {\rm IIIa}&$\chi\rtimes\sigma\St_{\GSp_2}$&0&1&2&4   &&-1   \\
   {\rm IIIb}&$\chi\rtimes\sigma\triv_{\GSp_2}$&1&3&2&4   &&1  \\
   \hline
%IV
   {\rm IVa}&$\sigma\St_{\GSp_4}$&0&0&0&1     &&1    \\
   {\rm IVd}&$\sigma\triv_{\GSp_4}$&1&1&1&1     &&2   \\
   \hline
%V
   {\rm Va}&$\delta([\xi,\nu\xi],\nu^{-\frac{1}{2}}\sigma)$&0&1&0&2     &&1    \\
   {\rm Vb}&$L(\nu^{\frac{1}{2}}\xi\St_{\GL_2},\nu^{-\frac{1}{2}}\sigma)$&0&1&1&2   &&-1  \\
   {\rm Vc}&$L(\nu^{\frac{1}{2}}\xi\St_{\GL_2},\xi\nu^{-\frac{1}{2}}\sigma)$&0&1&1&2    &&-1   \\
   {\rm Vd}&$L(\nu\xi,\xi\rtimes\nu^{-\frac{1}{2}}\sigma)$&1&1&2&2    &&1   \\
   \hline
%VI
   {\rm VIa}&$\tau(S,\nu^{-\frac{1}{2}}\sigma)$&0&1&1&3    &&0   \\
   {\rm VIb}&$\tau(T,\nu^{-\frac{1}{2}}\sigma)$&0&0&1&1    &&-1  \\
   {\rm VIc}&$L(\nu^{\frac{1}{2}}\St_{\GL_2},\nu^{-\frac{1}{2}}\sigma)$&0&1&0&1    &&0   \\
   {\rm VId}&$L(\nu,\triv_{F^\times}\rtimes\nu^{-\frac{1}{2}}\sigma)$&1&2&2&3   &&1   \\
   \hline

\end{tabular}
\end{center}
\begin{center}
Table 1
\end{center}

Let $m_i$ denote the number of type $i$ representations in $L^2(\Ga \bs G)$, counting multiplicity.
We are now ready to state the main result of this section.

\begin{theorem} \label{deg5}
The following two expressions for $Z'(\B_\Ga, u)$ hold.
\begin{align*}
Z'(\B_{\Gamma} ,u)&=\frac{1}{{\det (I-L_{P_1}^2 u^2)}{\det (I-L_{P_2}u)}}\\
&=\frac{(1-u)^{2\chi(\B_\Ga)}(1-qu)^{t_1}(1+qu)^{t_2}}{\det(I-A_2' u+(qA_1^2-2q^2A_2'')u^2-q^4 A_2' u^3+q^8I u^4)\det(I+L_I' u)},
\end{align*}
where $t_1=-m_{IIIa}+m_{IIIb}+m_{IVd}-m_{VIb}+m_{VId}$, and $t_2=m_{Va}-m_{Vb}-m_{Vc}+m_{Vd}$ with $t_1+t_2=-2(q^2-1)N_p$.
\end{theorem}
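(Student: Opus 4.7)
The plan is to establish the second expression for $Z'(\B_\Ga, u)$, as the first expression follows at once from the definitions together with the parity argument already recorded in the paragraph preceding the theorem: closed $\pi_{\spin}$-geodesic walks have even length, so $\det(I - L_{P_1}u) = \det(I + L_{P_1}u)$ and hence $\det(I - L_{P_1}u)^2 = \det(I - L_{P_1}^2 u^2)$. The strategy mirrors that of \cite{FLW} for the spin identity: reduce the rational function identity to a representation-theoretic calculation on $L^2(\Ga \bs G(F))$ decomposed into Iwahori-spherical irreducible constituents, then verify it one type at a time.

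First I would identify $L_I'$ as the chamber-adjacency operator for $\pi_{\st}$-chambers, so that $\tr (L_I')^n$ counts closed $\pi_{\st}$-geodesic galleries of length $n$ in $\B_\Ga$ and therefore $Z_2(\B_\Ga, \pi_{\st}, u) = \det(I - L_I' u)^{-1}$. Because $A_1, A_2', A_2'', L_{P_1}, L_{P_2}$, and $L_I'$ all commute with the right regular action of $G(F)$, each characteristic polynomial appearing in the claim factors according to the decomposition $L^2(\Ga \bs G(F)) = \bigoplus_\rho V_\rho$ into irreducible unitary subrepresentations. Each factor depends only on the Iwahori-spherical type of $\rho$ (Table~1) and on the unramified parameters $\chi_1, \chi_2, \sigma$; the claim thus reduces to a pointwise polynomial identity for each of the sixteen types.

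For each type I would compute the eigenvalues of the relevant operators on $V^K, V^{P_1}, V^{P_2}, V^I$ from the standard action of the affine and spherical Hecke algebras on the Iwahori-fixed subspace. For the generic Type~I representations, both sides of the claimed identity produce the same degree-$8$ polynomial in $u$, so the generic part contributes no correction factor. For the fifteen non-generic types, the ratio of the two sides simplifies to a product of factors among $(1-u)$, $(1-qu)$, and $(1+qu)$; whether a $(1 \pm qu)$-factor appears is dictated by whether the relevant unramified quadratic twist of $\sigma$ evaluated at the uniformizer equals $+1$ or $-1$. Summing these contributions weighted by the multiplicities $m_i$ in $L^2(\Ga \bs G(F))$ reproduces the exponents $t_1$ and $t_2$ displayed in the theorem, read off entry-by-entry from the C-column of Table~1. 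The exponent of $(1-u)$ assembles into
\[
4 \dim L^2(\Ga \bs G)^K + \dim L^2(\Ga \bs G)^I - 2 \dim L^2(\Ga \bs G)^{P_1} - \dim L^2(\Ga \bs G)^{P_2},
\]
which by the $f$-vector formulas for $\B_\Ga$ recalled before the theorem equals $2\chi(\B_\Ga)$.

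The main obstacle is the delicate bookkeeping of types V and VI, where ramified quadratic twists of the inducing characters determine which spectral parameters equal $q^{-1}$ versus $-q^{-1}$, and hence whether a non-generic correction contributes to $(1-qu)$ or to $(1+qu)$; this is precisely what separates $t_1$ from $t_2$. Once these signs are settled, the relation $t_1 + t_2 = -2(q^2 - 1)N_p$ falls out by comparing the total degrees of the two rational expressions, using $N = 2N_p$ and the simplicial counts $N_0 = (q^2+3)N_p$, $N_1 = 3(q^3+q^2+q+1)N_p$, and $N_2 = (q+1)(q^3+q^2+q+1)N_p$ established earlier.
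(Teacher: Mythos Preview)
Your overall strategy matches the paper: decompose $L^2(\Ga\bs G(F))$ into Iwahori-spherical constituents, compute the eigenvalues of $A_1,A_2',A_2'',L_{P_1},L_{P_2},L_I'$ type by type (this is the paper's Table~2), and verify the identity on each type. The paper simply records ``the zeta identity follows from Table~2,'' so your plan is on target there.

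There is, however, a genuine bookkeeping error in how you account for the exponent of $(1-u)$ and the relation to the $C$-column. The quantity
\[
4\dim L^2(\Ga\bs G)^K+\dim L^2(\Ga\bs G)^I-2\dim L^2(\Ga\bs G)^{P_1}-\dim L^2(\Ga\bs G)^{P_2}
\]
is \emph{not} equal to $2\chi(\B_\Ga)$; using the $f$-vector formulas you cite it evaluates to $8N_p-2N_1+2N_2$, whereas $2\chi(\B_\Ga)=2(q^2+3)N_p-2N_1+2N_2$, and these differ by $-2(q^2-1)N_p$. Nor is this dimension combination the exponent of $(1-u)$. In the type-by-type computation, $(1-u)$ factors arise only from types IVa and IVd (the only places where $1$ occurs as an eigenvalue in Table~2), and the exponent is $m_{IVa}+m_{IVd}$. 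One then needs the separate facts $m_{IVa}=2(\chi(\B_\Ga)-1)$ and $m_{IVd}=2$ to identify this with $2\chi(\B_\Ga)$. Likewise, the per-type contributions to $t_1$ and $t_2$ are read off from the eigenvalue table, not from the $C$-column; note for instance that Type~IVa has $C=1$ but contributes to neither $t_1$ nor $t_2$, and Type~IVd has $C=2$ but contributes only $1$ to $t_1$.

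What the $C$-column \emph{is} used for is exactly the verification of $t_1+t_2=-2(q^2-1)N_p$: summing the $C$-entries weighted by multiplicities gives the dimension combination on one hand and $t_1+t_2+m_{IVa}+m_{IVd}=t_1+t_2+2\chi(\B_\Ga)$ on the other; comparing with the direct evaluation $8N_p-2N_1+2N_2$ yields the claimed relation. Your ``degree comparison'' would recover the same equation, so that part of your plan is sound once the roles of the two tables are disentangled.
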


We follow the same approach as in \cite{FLW} computing eigenvalues of $-L_I'$, $\pm L_{P_1}$,
$L_{P_2}$ on each type of representation $(\rho, V)$, and record the data in Table 2.
The column $A_1, A_2', A_2''$ records the reciprocals of the zeros of $$\det(I-A_2' u+(qA_1^2-2q^2A_2'')u^2-q^4 A_2' u^3+q^8I u^4)$$ arising from $\rho$. The reader is referred to the of allcillary in \cite{KLW2} for details.

\begin{center}
\resizebox{13cm}{!}{
\begin{tabular}{|c|c|c|c|c|c|}
\hline
type& $-L_I'$& $\pm L_{P_1}$&  $L_{P_2}$ & $A_1$, $A_2', A_2''$ & condition\\
\hline
I & $\pm q^{\frac{3}{2}} \chi_1 \sigma(\pi)$  & $\pm q^{\frac{3}{2}}\chi_1\sigma(\pi)$ & $q^{2}\chi_1 \sigma^2 (\pi)$ & $q^{2}\chi_1\sigma^2(\pi)$ & $\chi_1\chi_2\sigma^2 = \bf 1$\\
  \  & $\pm q^{\frac{3}{2}}\chi_{2}\sigma(\pi)$ & $\pm q^{\frac{3}{2}}\chi_{2}\sigma(\pi)$ & $q^{2}\chi_{2}\sigma^2(\pi)$ &  $q^2\chi_{2}\sigma^2(\pi)$& \\
  \  & $\pm q^{\frac{3}{2}}\chi_{1}\chi_2\sigma(\pi)$ & $\pm q^{\frac{3}{2}}\chi_{1}\chi_{2}\sigma(\pi)$ & $q^{2}\chi_{1}\chi_2^2 \sigma^2(\pi)$ & $q^2\chi_{1}\chi_{2}^2\sigma^2(\pi)$ &\\
  \  & $\pm q^{\frac{3}{2}}\sigma(\pi)$ & $ \pm q^{\frac{3}{2}}\sigma(\pi)$ & $q^{2}\chi_1^2\chi_{2}\sigma^2(\pi)$ & $q^2\chi_1^2\chi_2\sigma^2(\pi)$& \\
 \hline

IIa & $\pm q $ & $ \pm q$ & $q^{\frac{3}{2}}\chi^{-1}(\pi)$ & none & $\chi^2\sigma^2 = \bf 1$\\
 \  & $q^{\frac{3}{2}}\chi(\pi), q^{\frac{3}{2}}\chi^{-1}(\pi)$ & \ & $ q^{\frac{3}{2}}\chi(\pi)$ & \ & \\
 \hline
IIb & $\pm q^2$ & $\pm q^{\frac{3}{2}}\chi^2\sigma(\pi)$ & $q^{\frac{5}{2}}\chi(\pi)$ &  $q^{\frac{5}{2}}\chi(\pi)$ & $\chi^2\sigma^2 = \bf 1$ \\
 \ & $-q^{\frac{3}{2}}\chi(\pi), -q^{\frac{3}{2}}\chi^{-1}(\pi)$ & $\pm q^{\frac{3}{2}}\sigma(\pi)$ & $q^{\frac{5}{2}}\chi^{-1}(\pi)$ &  $q^{\frac{5}{2}}\chi^{-1}(\pi)$  &\\
 \ & \  & $\pm q^{2}$ & \  & $q^{\frac{3}{2}}\chi(\pi), q^{\frac{3}{2}}\chi^{-1}(\pi)$ &\\
 \hline

IIIa & $\pm q\chi\sigma(\pi)$ & $\pm q\chi \sigma(\pi)$ & $q$ & none & $\chi \sigma^2 = \bf 1$\\
 \  & $\pm q\sigma(\pi)$ & $\pm q\sigma(\pi)$ & \  & \ & \\
 \hline
IIIb & $\pm q^{2}\chi\sigma(\pi)$ & $\pm q^{2}\chi\sigma (\pi)$ & $q^{3}$ & $q, q^3 $ & $\chi \sigma^2 = \bf 1$ \\
 \  & $\pm q^2\sigma(\pi)$ & $\pm q^{2}\sigma(\pi)$ & $q^{2}\chi(\pi), q^{2}\chi^{-1}(\pi)$  & $q^{2}\chi(\pi), q^2\chi^{-1}(\pi)$ & \\
 \hline

IVa & $1$  & none & none & none & $\sigma^2 = \bf 1$\\
\hline
IVd &  $-q^{3}$ & $\pm q^{3}$  & $q^{4}$ & $q^{4}, q^{3}, q, 1$& $\sigma^2 = \bf 1$\\
\hline

Va & $-q, -q$ & none & $-q$ & none & $\sigma^2 = \bf 1$\\
\hline
Vb & $-q^2, q$ & $\pm q$ & $-q^{2}$ & none & $\sigma^2 = \bf 1$\\
\hline
Vc & $-q^{2}, q$ & $\pm q$ & $-q^{2}$ & none & $\sigma^2 = \bf 1$\\
\hline
Vd & $q^2, q^2$ & $\pm q^{2}, \pm q^2$ & $-q^{3}$ & $-q^{3}, -q^2, -q^2, -q$ & $\sigma^2 = \bf 1$\\
\hline

VIa & $\pm q, q$ & $\pm q$ & $q$ & none & $\sigma^2 = \bf 1$\\
\hline
VIb & $-q$ & $\pm q$ & none & none & $\sigma^2 = \bf 1$\\
\hline
VIc & $q^2$ & none & $q^{2}$ & none & $\sigma^2 = \bf 1$\\
\hline
VId & $q^2, -q^2, -q^2$ & $\pm q^{2}, \pm q^{2}$& $q^{3}, q^{2}$ & $q^{3}, q^{2}, q^2, q$ & $\sigma^2 = \bf 1$\\
\hline

\end{tabular}
}
\end{center}

\begin{center}
Table 2
\end{center}

\begin{proof}

The zeta identity follows from Table 2. To show $t_1+t_2=-2(q^2-1)N_p$, observe from Table 1 that
\begin{multline*}
4\dim L^2(\Ga \bs G)^K+\dim L^2(\Ga \bs G)^I-(2\dim L^2(\Ga \bs G)^{P_1}+\dim L^2(\Ga \bs G)^{P_2})\\
=-m_{IIIa}+m_{IIIb}+m_{IVa}+2m_{IVd}+m_{Va}-m_{Vb}-m_{Vc}+m_{Vd}-m_{VIb}+m_{VId}.
\end{multline*}

Note that $\dim L^2(\Ga \bs G)^K=2N_p=\frac{2}{q^2+3}N_0$ and $\dim L^2(\Ga \bs G)^I=2N_2$ as the
former counts the number of special vertices and the latter counts the number of directed chambers in $B_{\Ga}$. As $\dim L^2(\Ga \bs G)^{P_1}$ and $\dim L^2(\Ga \bs G)^{P_2}$ count the number of directed type 1 edges and type 2 edges, respectively, they are both equal to $2(q^3+q^2+q+1)N_p$, and we have $$2\dim L^2(\Ga \bs G)^{P_1}+\dim L^2(\Ga \bs G)^{P_2}=2N_1.$$ So the left hand side of the identity equals $$\frac{8}{q^2+3}N_0-2N_1+2N_2.$$

Since $m_{IVa}=2(\chi(B_\Ga)-1)$ and $m_{IVd}=2$, the right hand side of the identity equals $$t_1+t_2-2+2(\chi(B_{\Ga})-1)+4=t_1+t_2+2\chi(B_{\Ga}).$$

\noindent Comparing the two sides gives $t_1+t_2=-2(q^2-1)N_p.$

\end{proof}

We rephrase Theorem \ref{deg5} in terms of the Langlands $L$ and zeta functions of geodesic walks and galleries. Put
$E(u, q)=(1-u)^{2\chi(\B_{\Gamma})}(1-q u)^{t_1} (1+q u)^{t_2}$, where
$t_1=-m_{IIIa}+m_{IIIb}+m_{IVd}-m_{VIb}+m_{VId}$, $t_2=m_{Va}-m_{Vb}-m_{Vc}+m_{Vd}$ with $t_1+t_2 = -2(q^2-1)N_p$ are as in Theorem \ref{deg5}.

\begin{theorem}\label{st-building}
For $G=\PGSP_4$, the following identity holds:

\begin{eqnarray*}
E(u, q)(1-q^2 u)^N L(\Ga \bs G(F), \pi_\st, q^2 u)
=\frac{Z(\B_\Ga, \pi_\st, u)Z(\B_\Ga, \pi_\spin, u)^2}{Z_2(\B_\Ga, \pi_\st, -u)},
\end{eqnarray*}
where $E(u, q)$ is as defined above, and $N$ is the number of special vertices  in $\B_\Ga$.
\end{theorem}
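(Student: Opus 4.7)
The plan is to derive Theorem \ref{st-building} as a direct algebraic consequence of the operator identities already assembled in \S6.3.2 together with Theorem \ref{deg5}; no new representation-theoretic input is required.

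First, I would collect the three operator-theoretic expressions already at our disposal. From the computation preceding Theorem \ref{deg5} we have
\begin{equation*}
(1-q^2 u)^N L(\Ga \bs G(F),\pi_{\st}, q^2 u) = \frac{1}{\det\bigl(I-A_2' u+(qA_1^2-2q^2A_2'')u^2-q^4 A_2' u^3+q^8I u^4\bigr)},
\end{equation*}
the definition of $L_I'$ via the double coset $It'I$ gives
\begin{equation*}
Z_2(\B_\Ga,\pi_{\st},u) = \frac{1}{\det(I-L_I'u)}, \qquad Z_2(\B_\Ga,\pi_{\st},-u) = \frac{1}{\det(I+L_I'u)},
\end{equation*}
and the identification $Z'(\B_\Ga,u) = Z(\B_\Ga,\pi_{\st},u)\,Z(\B_\Ga,\pi_{\spin},u)^2$ (from the fact that closed $\pi_{\spin}$-geodesic walks have even length) lets us rewrite the right-hand side of the asserted identity as
\begin{equation*}
\frac{Z(\B_\Ga,\pi_{\st},u)\,Z(\B_\Ga,\pi_{\spin},u)^2}{Z_2(\B_\Ga,\pi_{\st},-u)} = Z'(\B_\Ga,u)\cdot\det(I+L_I'u).
\end{equation*}

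Next I would invoke the second expression for $Z'(\B_\Ga,u)$ furnished by Theorem \ref{deg5}, namely
\begin{equation*}
Z'(\B_\Ga,u) = \frac{(1-u)^{2\chi(\B_\Ga)}(1-qu)^{t_1}(1+qu)^{t_2}}{\det\bigl(I-A_2' u+(qA_1^2-2q^2A_2'')u^2-q^4 A_2' u^3+q^8I u^4\bigr)\det(I+L_I' u)}.
\end{equation*}
Multiplying this by $\det(I+L_I'u)$ cancels the gallery factor, and the numerator is precisely $E(u,q)$ by its definition. Combining with the operator formula for the standard $L$-function collected above,
\begin{equation*}
Z'(\B_\Ga,u)\det(I+L_I'u) = \frac{E(u,q)}{\det\bigl(I-A_2' u+(qA_1^2-2q^2A_2'')u^2-q^4A_2'u^3+q^8 Iu^4\bigr)} = E(u,q)(1-q^2u)^N L(\Ga\bs G(F),\pi_{\st},q^2 u),
\end{equation*}
which is exactly the claimed identity.

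The only nontrivial content has already been absorbed in Theorem \ref{deg5}; in particular, the delicate computation identifying the exponents $t_1,t_2$ and verifying $t_1+t_2 = -2(q^2-1)N_p$ via the case-by-case analysis of Iwahori-spherical representations (Tables 1 and 2) is exactly where the hard work lies. Once that is in hand, the present theorem is simply a rearrangement: the operator denominator attached to the $L$-function matches the denominator in the Euler-product expression of $Z'$, and the extraneous factor $\det(I+L_I'u)$ in the $Z'$ formula is precisely the reciprocal of $Z_2(\B_\Ga,\pi_{\st},-u)$. Thus the main obstacle is not in the present proof but already resolved in Theorem \ref{deg5}; what remains here is purely bookkeeping.
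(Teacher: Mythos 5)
Your proposal is correct and is exactly the argument the paper intends: the paper states Theorem \ref{st-building} as a direct rephrasing of Theorem \ref{deg5}, deferring all substantive work (the operator determinant formula for the standard $L$-function, the identification $Z_2(\B_\Ga,\pi_{\st},\pm u)^{-1}=\det(I\mp L_I'u)$, and the factorization $Z'(\B_\Ga,u)=Z(\B_\Ga,\pi_{\st},u)Z(\B_\Ga,\pi_{\spin},u)^2$) to the preceding discussion and to Theorem \ref{deg5} itself. Your write-up simply makes explicit the bookkeeping that the paper leaves implicit, with no deviation in approach.
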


Leaving aside the factor $E(u, q)$, we note that, upon letting $q=1$, the remaining part reduces to the identity (\ref{st}).
\bigskip

\noindent {\bf Acknowledgment.} The authors are grateful to the anonymous referee for the valuable comments and corrections which clarified and improved the exposition of this paper.

\begin{bibdiv} \begin{biblist}

\bib{Ba}{article}{
   author={Bass, H.},
   title={The Ihara-Selberg zeta function of a tree lattice},
   journal={Internat. J. Math.},
   volume={3},
   date={1992},
   number={6},
   pages={717--797},
   issn={0129-167X},
 }

\bib{Ca}{article}{
   author={Casselman, W.},
   title={The unramified principal series of ${\germ p}$-adic groups. I. The
   spherical function},
   journal={Compositio Math.},
   volume={40},
   date={1980},
   number={3},
   pages={387--406},
}

\bib{DK}{article}{
   author={Deitmar, A.},
    author={Kang, M.-H.},
   title={Zeta functions of $\mathbb{F}_1$ buildings},
   journal={J. Math. Soc. Japan},
   volume={68},
   number={2},   
   pages={807-822},   
   date={2016}
}

\bib{DKM}{article}{
   author={Deitmar, A.},
    author={Kang, M.-H.},
    author={McCullam, R.},
   title={Building lattices and zeta functions},
   journal={arxiv.org/1412.3327},
   date={2015},
}

\bib{FLW}{article}{
   author={Fang, Y.},
   author={Li, W.-C. W.},
   author={Wang, C.-J.},
   title={The zeta functions of complexes from ${\rm Sp}(4)$},
   journal={Int. Math. Res. Not.},
   date={2013},
   number={4},
   pages={886--923},
}

\bib{Ha1}{article}{
   author={Hashimoto, K.},
   title={Zeta functions of finite graphs and representations of $p$-adic
   groups},
   conference={
      title={Automorphic forms and geometry of arithmetic varieties},
   },
   book={
      series={Adv. Stud. Pure Math.},
      volume={15},
      publisher={Academic Press, Boston, MA},
   },
   date={1989},
   pages={211--280},
}

\bib{Ha2}{article}{
   author={Hashimoto, K.},
   title={On zeta and L-functions for finite graphs},
   journal={ International Journal of Mathematics},
      volume={1},
   date={1990},
   pages={ 381-396},
}

\bib{Ha3}{article}{
   author={Hashimoto, K.},
   title={Artin type $L$-functions and the density theorem
for prime cycles on finite graphs},
   journal={ International Journal of Mathematics},
      volume={3},
   date={1992},
   pages={809-826},
}

\bib{Ih}{article}{
   author={Ihara, Y.},
   title={On discrete subgroups of the two by two projective linear group
   over ${\germ p}$-adic fields},
   journal={J. Math. Soc. Japan},
   volume={18},
   date={1966},
   pages={219--235},
}

\bib{KL1}{article}{
   author={Kang, M.-H.},
   author={Li, W.-C. W.},
   title={Zeta functions of complexes arising from ${\rm PGL}(3)$},
   journal={Adv. Math.},
   volume={256},
   date={2014},
   pages={46--103},
}

\bib{KL2}{article}{
   author={Kang, M.-H.},
   author={Li, W.-C. W.},
   title={Artin $L$-functions on finite quotients of ${\rm PGL}_3$},
   journal={ Int. Math. Res. Not.},
   volume={2015 issue 19},
   date={2015},
   pages={9251--9276},
}

\bib{KLW}{article}{
   author={Kang, M.-H.},
   author={Li, W.-C. W.},
   author={Wang, C.-J.},
   title={The zeta functions of complexes from ${\rm PGL}(3)$: a
   representation-theoretic approach},
   journal={Israel J. Math.},
   volume={177},
   date={2010},
   pages={335--348},
}

\bib{KLW2}{article}{
   author={Kang, M.-H.},
   author={Li, W.-C. W.},
   author={Wang, C.-J.},
   title={Zeta and L-functions of finite quotients of apartments and buildings},
   journal={https://arxiv.org/abs/1505.00902},
   date={2017},

}

\bib{KM}{article}{
    author={Kang, M.-H.},
    author={McCullam, R.},
   title={Twisted Poincare series and zeta functions on finite quotients of buildings},
   journal={https://arxiv.org/abs/1606.07317},
   date={2016},
}

\bib{NS}{book}{
   author={Nikulin, V. V.},
   author={Shafarevich, I. R.},
   title={Geometries and groups},
   series={Universitext},
   note={Translated from the Russian by M. Reid;
   Springer Series in Soviet Mathematics},
   publisher={Springer-Verlag, Berlin},
   date={1987},
   pages={viii+251},
   isbn={3-540-15281-4},
   review={\MR{917939}},
   doi={10.1007/978-3-642-61570-2},
}

\bib{Sa}{article}{
   author={Satake, I.},
   title={Theory of spherical functions on reductive algebraic groups over
   ${\germ p}$-adic fields},
   journal={Inst. Hautes \'Etudes Sci. Publ. Math.},
   number={18},
   date={1963},
   pages={5--69},
}

\bib{Se}{book}{
   author={Serre, J.-P.},
   title={Linear representations of finite groups},
   note={Translated from the second French edition by Leonard L. Scott;
   Graduate Texts in Mathematics, Vol. 42},
   publisher={Springer-Verlag, New York-Heidelberg},
   date={1977},
   pages={x+170},
}

\bib{Sc}{article}{
   author={Schmidt, R.},
   title={Iwahori-spherical representations of ${\rm GSp}(4)$ and Siegel modular forms of degree 2 with
          square-free level},
   journal={J. Math. Soc. Japan},
   volume={57},
   number={1},
   date={2005},
   pages={259--293},

}

\bib{Ti}{article}{
   author={Tits, J.},
   title={Reductive groups over local fields},
   conference={
      title={Automorphic forms, representations and $L$-functions},
      address={Proc. Sympos. Pure Math., Oregon State Univ., Corvallis,
      Ore.},
      date={1977},
   },
   book={
      series={Proc. Sympos. Pure Math., XXXIII},
      publisher={Amer. Math. Soc., Providence, R.I.},
   },
   date={1979},
   pages={29--69},
}

\end{biblist} \end{bibdiv}

\end{document}